\newcommand{\defeq}{\coloneqq}
\newcommand{\op}{\operatorname}
\newcommand{\Sp}{\op{Sp}}
\newcommand{\be}{\begin{equation}}
	\newcommand{\ee}{\end{equation}}
\newcommand{\Ga}{\Gamma}
\newcommand{\z}{\mathbb Z}
\newcommand{\R}{\mathbb R}
\newcommand{\Z}{\mathbb Z}
\newcommand{\ga}{\gamma}
\newcommand{\la}{\lambda}
\newcommand{\La}{\Lambda}
\newcommand{\inte}{\op{int}}
\newcommand{\ba}{\backslash}
\newcommand{\cal}{\mathcal}
\newcommand{\br}{\mathbb R}
\newcommand{\SO}{\op{SO}}
\newcommand{\irr}{\op{irr}}
\newcommand{\PSL}{\op{PSL}}
\newcommand{\bH}{\mathbb H}
\newcommand{\q}{\mathbb Q}
\newcommand{\G}{\Gamma}
\renewcommand{\frak}{\mathfrak}
\newcommand{\e}{\varepsilon}
\renewcommand{\L}{\mathcal L}
\newcommand{\fa}{\mathfrak a}
\newcommand{\lox}{\op{lox}}
\DeclareMathOperator{\SL}{SL}
\renewcommand{\u}{\mathsf{u}}
\renewcommand{\epsilon}{\e}
\newtheorem{theorem}{Theorem}[section]
\newtheorem{thm}[theorem]{Theorem}
\newcommand{\diag}{\op{diag}}
\newtheorem*{claim*}{Claim}
\newtheorem{lemma}[theorem]{Lemma}
\newtheorem{lem}[theorem]{Lemma}
\newtheorem{cor}[theorem]{Corollary}
\newtheorem{prop}[theorem]{Proposition}
\newtheorem{Con}[theorem]{Conjecture}
\theoremstyle{definition}
\newtheorem{definition}[theorem]{Definition}
\newtheorem{Def}[theorem]{Definition}
\theoremstyle{remark}
\newtheorem{Rmk}[theorem]{Remark}
\numberwithin{equation}{section}
\title[Totally real units and 
eigenvalue patterns in thin tubes]{Counting totally real units and 
eigenvalue patterns in $\SL_n(\mathbb Z)$ and $\Sp_{2n}(\mathbb Z)$ in thin tubes}
\author{Hee Oh}
\address{Department of Mathematics, Yale University, New Haven, CT 06511}
\email{hee.oh@yale.edu}
\begin{document}
\begin{abstract}
For a vector $v=(v_1,\dots ,v_n)$ with $v_1>\cdots>v_n$ and
$\sum v_i=0$, we study the {\it directional entropy} of two arithmetic objects:
\begin{enumerate}
\item the logarithmic embeddings of degree-$n$ totally real units, and
\item the logarithmic eigenvalue data of $\operatorname{SL}_n(\mathbb Z)$.
\end{enumerate}
In each case, the entropy in the direction of $v$ is
$$
 \mathsf E_n(v)= \rho_{\operatorname{SL}_n}(v)=\sum_{i=1}^{n-1}(n-i)\,v_i,
$$
the value of the half-sum  of positive roots of $\operatorname{SL}_n(\mathbb R)$ evaluated at $v$.
More precisely, the number of objects lying in a thin tube around the ray $\br_+v$ and of norm at most $T$ grows on the order of $ \exp\!\bigl(\rho_{\operatorname{SL}_n}(v)\,T\bigr)$ as $T\to \infty$.

Because each eigenvalue data determines an
$\operatorname{SL}_n(\mathbb R)$-conjugacy class, this implies a lower bound of order
$\exp\!\bigl(\rho_{\operatorname{SL}_n}(v)T\bigr)$ for the number of
$\operatorname{SL}_n(\mathbb Z)$-conjugacy classes with a prescribed eigenvalue data; we also obtain  an upper bound of order
$\exp\!\bigl(2\rho_{\operatorname{SL}_n}(v)T\bigr)$.

A parallel argument for the symplectic lattice $\operatorname{Sp}_{2n}(\mathbb Z)$, taken in the symmetric direction
$v=(v_1,\dots ,v_n,-v_n,\dots ,-v_1),\quad v_1>\cdots>v_n>0,$
shows that
$$\mathsf E_{2n}^{\operatorname{Sp}}(v)=\rho_{\operatorname{Sp}_{2n}}(v)=\sum_{i=1}^n(n+1-i)v_i,$$ the half-sum of positive
roots of $\operatorname{Sp}_{2n}(\mathbb R)$.
\end{abstract}

	\maketitle
	\section{Introduction}\label{sec:Introduction}
    Classical arithmetic asks how many objects of a given kind—ideals,
points, matrices, geodesics—fit inside a region that grows without
bound.  In higher rank, the natural “size’’ of an object is rarely a
single number; instead it is a vector that records growth rates in
several directions at once.  When we restrict our attention to a thin
tube around a fixed ray, the leading exponent of an exponential growth can be viewed as a \emph{directional
entropy}: it measures how densely the arithmetic set populates that ray.

This paper pinpoints an explicit linear functional that governs the
directional entropy of the following two collections:
\begin{itemize}
  \item the logarithmic embeddings of \emph{all} totally real units of
        fixed degree~$n$;
  \item the logarithmic eigenvalue data (=Jordan projections) of 
        elements in\/ $\SL_n(\mathbb Z)$.
\end{itemize}

We prove that both collections exhibit the same entropy along every ray in the positive Weyl chamber.  
 Going further, we count
$\SL_n(\mathbb Z)$-conjugacy classes that share a prescribed eigenvalue
pattern. The Jordan-projection entropy
yields an immediate lower bound for this count, and we provide an upper bound, which is conjecturally a true order of magnitude. We also address the analogous problem for the symplectic lattice $\Sp_{2n}(\z)$.

\subsection*{Totally real algebraic units} For an integer $n\ge 2$, let $\cal K^\star_n$ denote
the set  of totally real number
fields $K$ of degree $n$. 
For each $K\in \cal K^\star_n$, let $\Sigma_K$ denote the set of all {\it ordered} embeddings of $K$ into $\br$.
Define
$$\cal K_n=\bigsqcup_{K\in \cal K_n^\star, \sigma\in \Sigma_K} (K, \sigma) .$$  For $(K,\sigma) \in \cal K_n$ with $\sigma=(\sigma_1, \cdots, \sigma_n)$, define the logarithmic map
$$\Lambda_{K,\sigma} : K-\{0\}\to \br^n,\;\; 
\Lambda_{K,\sigma} (\mathsf u)=
   \bigl(
   \log\lvert\sigma_{1}(\u)\rvert,\dots,
   \log\lvert\sigma_{n}(\u)\rvert
   \bigr).$$
 Denote by $O_{K}$ the ring of integers of $K$ and by
$O_{K}^\times$ its unit group.
Consider the hyperplane $$\mathsf H=\{ v=(v_1, \cdots, v_n)\in \br^n: \sum v_i=0\}$$
and note that, by Dirichlet's unit theorem, $\Lambda_{K,\sigma} (O_K^\times)$ is a lattice in $\mathsf H$ (cf. \cite{Ko}).

We extend $\Lambda_{K,\sigma}$  coordinate-wise to a map
$\Lambda:\cal K_n-\{0\}\to \br^n $: $\La(u)=\La_{K, \sigma}(u)$ if $u\in (K, \sigma)$.  
Collect all totally
real units of degree $n$ in the disjoint union
$$
O_n^\times =\bigsqcup_{(K,\sigma)\in \cal K_n} (O_{K}^\times,  \sigma ).
$$

\medskip 
We propose the following notion of directional entropy for  vectors in $$
\mathsf H_+ =\Bigl\{v=(v_1,\dots,v_n)\in\mathsf H:
            v_1>\dots>v_n \Bigr\}.
$$

\begin{definition}[Directional entropy] Fix a norm $\|\cdot\|$ on $\br^n$. For $v\in \mathsf H_+$,  
define the \emph{upper} and \emph{lower} directional entropies of $O_n^\times $ in the direction $v$ by
\begin{align*}
     \overline{\mathsf E}_{n}(v)
   &  :=\|v\|\cdot  \lim_{\varepsilon\to0}\;
       \limsup_{T\to\infty}\frac1T\log N_{\varepsilon}(T,v),
  \\ 
  \underline{\mathsf E}_{n}(v)
   &  :=\|v\|\cdot \lim_{\varepsilon\to0}\;
       \liminf_{T\to\infty}\frac1T\log N_{\varepsilon}(T,v)
\end{align*}
where $$ N_{\varepsilon}(T,v):=
  \#\bigl\{u\in O_n^{\times}:\; \|\Lambda(u)\|\le T,\;
          \|\Lambda(u)-\br_+v\|<\varepsilon\bigr\}.$$ 
          
These quantities lie in $\{-\infty\} \cup [0,\infty)$,  are independent of the choice of a norm, and are homogeneous of degree one in $v$.
When they coincide, we write $\mathsf E_{n}(v)$ for their common value.
\end{definition}

\begin{Rmk} We call this quantity a {\it directional entropy} because it records the exponential growth rate of units whose logarithmic embeddings stay in a thin tube about the ray $\mathbb R_{+}v$, mirroring standard entropy-type counts in dynamics.
\end{Rmk}
 
We compute the directional entropy of $O_n^\times$ for every $v\in \mathsf H_+$: \begin{thm}\label{iup3}
    For each $v=(v_1, \cdots, v_n)\in \mathsf H_+$, we have
    $$
    \mathsf E_n(v) = \sum_{i=1}^{n-1} (n-i)v_i.
    $$
\end{thm}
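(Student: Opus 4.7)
The plan is to derive Theorem \ref{iup3} as a direct consequence of the point-count of Theorem \ref{ms} by converting that ball-count into a tube-count via summation along the ray. Write $\rho(v) := \sum_{i=1}^{n-1}(n-i)v_i$; the goal is then $\mathsf E_n(v) = \rho(v)$. Two facts will be used repeatedly: first, by the linearity of $\rho$, Theorem \ref{ms} applied at parameter $s \ge 1$ gives $\#\{u \in O_n^\times : \|\Lambda(u) - sv\| < \varepsilon\} \asymp_\varepsilon \exp(\rho(v) s)$; second, $\rho(v) > 0$ on $\mathsf H_+$, since $\rho$ is the half-sum of positive roots of $\mathrm{SL}_n(\br)$ and $v$ lies in the positive Weyl chamber.

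For the upper bound I would discretize the parameter interval $[0, (T+\varepsilon)/\|v\|]$ by a grid $s_k = k\delta$ and cover the tube defining $N_\varepsilon(T,v)$ by balls centered at $s_k v$ of radius $\varepsilon + \delta\|v\|$:
$$N_\varepsilon(T,v) \le \sum_{k=0}^{K} \#\bigl\{u \in O_n^\times : \|\Lambda(u) - s_k v\| < \varepsilon + \delta\|v\|\bigr\},\qquad K = \lceil (T+\varepsilon)/(\delta\|v\|)\rceil.$$
Applying Theorem \ref{ms} to each summand with $s_k \ge 1$ (and absorbing the finitely many initial terms into a constant) bounds each summand by $C_{\varepsilon,\delta}\exp(\rho(v) s_k)$. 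Since $\rho(v) > 0$, the resulting geometric series is dominated by its largest term, yielding
$$N_\varepsilon(T,v) \le C'_{\varepsilon,\delta}\exp\bigl(\rho(v) T/\|v\|\bigr).$$
Taking $\frac{1}{T}\log$, sending $T \to \infty$ and then $\varepsilon \to 0$, and multiplying by $\|v\|$ produces $\overline{\mathsf E}_n(v) \le \rho(v)$.

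For the matching lower bound, observe that $B_\varepsilon(s_\star v)$ with $s_\star = (T-\varepsilon)/\|v\|$ lies entirely inside the tube $\{w : \|w\| \le T,\; d(w,\br_+ v) < \varepsilon\}$, because any $w$ in this ball satisfies $\|w\| \le s_\star\|v\| + \varepsilon = T$ and $d(w,\br_+ v) \le \|w - s_\star v\| < \varepsilon$. A single application of Theorem \ref{ms} therefore gives
$$N_\varepsilon(T,v) \ge c_\varepsilon \exp(\rho(v) s_\star) = c_\varepsilon \exp\bigl(\rho(v)(T-\varepsilon)/\|v\|\bigr),$$
so $\underline{\mathsf E}_n(v) \ge \rho(v)$. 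Combined with the upper bound, this yields $\mathsf E_n(v) = \rho(v)$. There is no substantive obstacle in this deduction: Theorem \ref{ms} carries the entire analytic burden, and the conversion to a directional entropy is a routine bookkeeping argument whose only non-trivial input is the positivity of $\rho$ on the positive Weyl chamber, which ensures the geometric sum in the upper bound is dominated by its endpoint term.
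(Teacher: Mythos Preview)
Your proposal is correct and follows the same approach as the paper: Theorem~\ref{iup3} is treated there as an immediate consequence of Theorem~\ref{ms} (and of Proposition~\ref{um}/Theorem~\ref{iup1} in Section~2), with the tube-to-ball conversion left implicit. You have simply written out that routine discretization and endpoint-domination argument in full, which is exactly what the paper intends.
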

We expect the same formula to hold
for vectors lying on the  walls of $\mathsf H_+$.

The entropy $\mathsf E_n(v)$ may also be expressed via the discriminant of the model polynomial
 $q_{Tv}(x)=\prod_{i=1}^n (x-e^{Tv_i})$:
 $$\lim_{T\to\infty}\frac{1}{T} \log \sqrt{\op{Disc} \, (q_{Tv})} =\sum_{i=1}^{n-1} (n-i)v_i. 
    $$

\begin{Rmk} \label{max}
On  the unit sphere for the max-norm
$\{v\in \mathsf H_+: \|v\|_{\max}=1\}$, the entropy functional $\mathsf E_n$ reaches its supremum
$\lfloor \frac{n^2}4\rfloor$ in the direction of
$v=(1,\cdots 1, 0, -1,  \cdots, -1)$ for  $n$ odd 
 and $v=(1,\cdots, 1, -1, \cdots, -1)$ for $n$ even, where
 the first $\lfloor n/2\rfloor$-coordinates are $1$. For instance,
 $$\sup\{\mathsf E_4 (v):v\in \mathsf H_+, \|v\|_{\max}=1\} =4.$$
 On the Euclidean unit sphere
 $\{\|v\|_{\op{Euc}}=1\}$,
the maximum value of $\mathsf E_n$ is  $ \sqrt{\frac{n(n^2-1)}{12}}$, attained
in the direction $(n-1, n-3, \cdots, -(n-3),-(n-1) )$.
\end{Rmk}

\begin{figure} 
  \includegraphics [height=4.5cm]{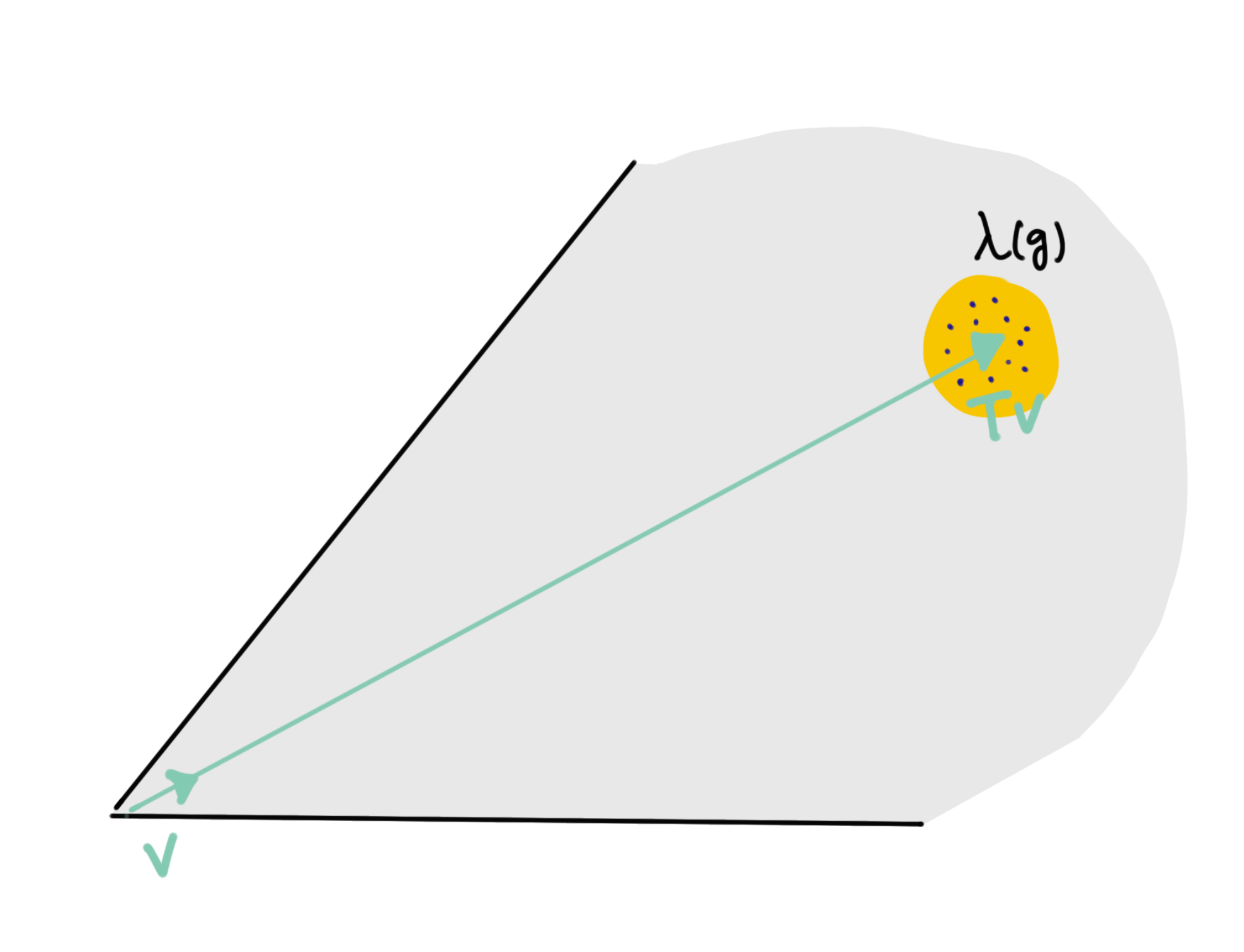}  
  \includegraphics [height=4.5cm]{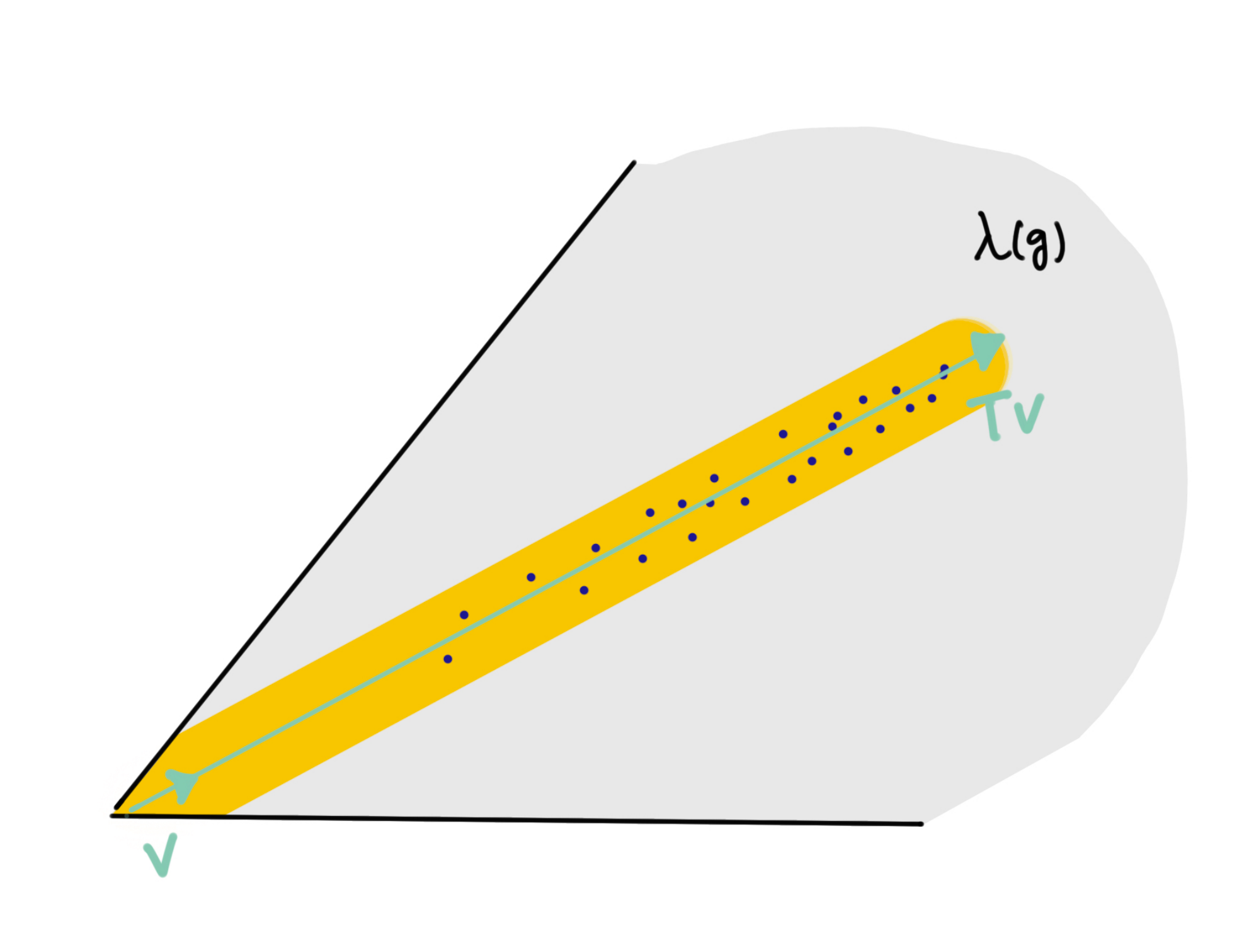}  
\end{figure}

The following quantitative theorem yields Theorem \ref{iup3}.
\begin{thm} \label{ms} Let $v\in \mathsf H_+$. For all sufficiently small $\e>0$, we have 
$$ \# \bigl\{ \u\in   O_n^\times :
   \| \Lambda( \u)- T v \|_{\max} <\e  \bigr\}\;\; \asymp_\e\footnote{We write $f(T)\asymp g(T)$ if there exist $C_1,C_2>0$  such that
$C_1\,g(T)\le f(T)\le C_2\,g(T)$ for all $T\ge 1$. The notation $f(T)\asymp_{\varepsilon}g(T)$ has the same meaning, except that
$C_1$ and $C_2$ may depend on $\varepsilon$.} \;\; \exp \left( \sum_{i=1}^{n-1} (n-i)v_iT \right) .$$
More precisely,
   \begin{multline*}   2 \left(\frac{4\e}{ (n-1)3^n} \right)^{n-1} 
  \le \liminf_{T\to \infty}  \frac{\# \bigl\{ \u\in   O_n^\times :
   \| \Lambda( \u)- T v \|_{\max} <\e  \bigr\}}{\exp {\left( \sum_{i=1}^{n-1}(n-i)v_i T \right) }}  \\ \le \limsup_{T\to \infty} \frac{\#  \bigl\{ \u\in   O_n^\times :
   \| \Lambda( \u)- T v \|_{\max}<\e  \bigr\}}{\exp { \left(\sum_{i=1}^{n-1}(n-i)v_i T\right) }}  \le 2 (4\e)^{n-1} n! .\end{multline*} 
\end{thm}  

It is natural to ask whether the limit 
$\lim_{T\to \infty} \frac{\# \bigl\{ \u\in   O_n^\times :
   \| \Lambda( \u)- T v \|_{\max} <\e  \bigr\}}{\exp {\left( \sum_{i=1}^{n-1}(n-i)v_i T \right) }}$ exists, and, if so, what its value is.  In Proposition \ref{um}, we give exponential error terms in the upper and lower bounds above; in particular, these error terms can be taken uniformly over all $v$ in any fixed compact subset of $\mathsf H_+$.

\subsection*{Eigenvalue patterns in $\SL_n(\z)$} 
An element $g\in \SL_n(\br)$ is called {\it loxodromic} if its eigenvalues have pairwise distinct moduli; in particular, they are all real.
For such  $g\in\SL_n(\mathbb R)$, write its eigenvalues as
\be\label{eg} \cal E(g)= \left( m_1(g) e^{\lambda_1(g)},\dots,m_n(g) e^{\lambda_n(g)}\right) \ee
with signs $m_i(g)\in \{\pm 1\}$ and ordering given by
$\lambda_1(g)\ge\cdots\ge\lambda_n(g)$.
Set 
\be\label{eg2}
  \lambda(g)\coloneqq
     \bigl(\lambda_{1}(g),\dots,\lambda_{n}(g)\bigr),
  \qquad
  m(g)\coloneqq\bigl(m_{1}(g),\dots,m_{n}(g)\bigr).
\ee 
The vector $\lambda(g)$ is called the {\it Jordan projection} of $g$.
Define the linear functional
$$\rho_{\SL_n}(v)=\tfrac{1}{2}\sum_{1\le i<j\le n} (v_i-v_j)= \sum_{i=1}^{n-1} (n-i)v_i$$
 which is the half-sum of all positive roots of $\SL_n(\br)$.
For $v\in \mathsf H_+$ and $\e>0$, if $T$ is sufficiently large, then any $\ga\in \SL_n(\br)$ with $\|\lambda(\ga)-Tv\|<\e$ is loxodromic.

We prove the following counting results: \begin{thm} \label{strong}
Let $v\in\mathsf H_+$ and let $m=(m_1, \cdots, m_n)\in \{\pm 1\}$ be a sign pattern with $\prod_{i=1}^n m_i=1$.
Fix $\e>0$.
\begin{enumerate}
    \item 
We have 
$$
\#\Bigl\{ \lambda (\ga):\ga\in \SL_n(\z),
              \|\lambda(\ga)-Tv\|\le\e, m(\ga)=m\Bigr\}  \asymp_\e  e^{\rho_{\SL_n}(v)T} ,$$
              where explicit upper and lower
            multiplicative constants are given in Theorem \ref{jm}.
       \item  There exist $C_1, C_2>0$ such that for all sufficiently large $T>1$, 
$$
            C_1 e^{\rho_{\SL_n}(v)T}   \le 
\#\Bigl\{ [\ga] \in [\SL_n(\mathbb Z)] :
              \|\lambda(\ga)-Tv\|\le\e, m(\ga)=m\Bigr\} \le C_2
              e^{2 \rho_{\SL_n}(v)T} $$
 where $[\SL_n(\mathbb Z)]$ denotes the set of all $\SL_n(\z)$-conjugacy classes. 
        \end{enumerate}
        \end{thm}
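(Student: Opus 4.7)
I deduce the four bounds from \Cref{ms} by combining companion matrices, the Latimer--MacDuffee correspondence, and Minkowski's bound on class numbers.

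\emph{Upper bound in (1).} An element $\ga\in\SL_n(\Z)$ with $\|\lambda(\ga)-Tv\|\le\e$ and $m(\ga)=m$ has characteristic polynomial
\[p_\ga(x)=x^n+\sum_{k=1}^{n-1}(-1)^k a_k\,x^{n-k}+(-1)^n\in\Z[x],\]
whose coefficients $a_k=e_k(\mu_1,\dots,\mu_n)$ (with $\mu_i=m_ie^{\lambda_i(\ga)}$) satisfy $|a_k|\le C(v,\e)\,e^{T(v_1+\cdots+v_k)}$ since $v_1>\cdots>v_n$. Hence the number of integer tuples $(a_1,\dots,a_{n-1})$ in the resulting box is at most $C'\prod_{k=1}^{n-1}e^{T(v_1+\cdots+v_k)}=C' e^{\rho_{\SL_n}(v)T}$, using the identity $\sum_{k=1}^{n-1}(v_1+\cdots+v_k)=\sum_{i=1}^{n-1}(n-i)v_i=\rho_{\SL_n}(v)$; each Jordan projection is determined by $(p_\ga,m)$.

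\emph{Lower bound in (1).} I invoke \Cref{ms}: there are $\asymp_\e e^{\rho_{\SL_n}(v)T}$ ordered units $(u,\sigma)\in O_n^\times$ with $\|\Lambda(u)-Tv\|<\e$. For $v\in\mathsf H_+$ and $\e$ small, each such $u$ must generate its field $K$, since an element of a proper subfield would force repeated coordinate values in $\Lambda(u)$, contradicting $v_1>\cdots>v_n$. For each admissible $u$ additionally satisfying $N_{K/\q}(u)=1$ and having sign pattern $m$, the companion matrix $C_u\in\SL_n(\Z)$ is loxodromic with $\lambda(C_u)=\Lambda(u)$ and $m(C_u)=m$; distinct admissible units produce distinct Jordan projections up to a bounded overcount by $|\Aut(K/\q)|\le n$. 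The combined norm-and-signature homomorphism $O_K^\times\to\{\pm 1\}^{n+1}$ has image of size at most $2^{n+1}$, so restricting to the coset cutting out $(N=1,\text{signs}=m)$ costs a bounded factor. Summing over those $(K,\sigma)\in\cal K_n$ whose signature map realizes $m$ yields $\gtrsim e^{\rho_{\SL_n}(v)T}$ admissible units, hence Jordan projections.

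\emph{Part (2).} The lower bound is immediate from (1) since each conjugacy class has a unique Jordan projection. For the upper bound, the Latimer--MacDuffee theorem puts the $\GL_n(\Z)$-conjugacy classes with fixed irreducible characteristic polynomial $p\in\Z[x]$ in bijection with proper ideal classes of the order $\Z[u]$ (for $u$ a root of $p$), and passing to $\SL_n(\Z)$-classes refines this by a factor of at most $2$. Minkowski-type estimates give $h(\Z[u])\lesssim e^{\rho_{\SL_n}(v)T}$, since
\[|\op{disc}(p)|=\prod_{i<j}(\mu_i-\mu_j)^2\asymp\prod_{i<j}e^{2Tv_i}=e^{2\rho_{\SL_n}(v)T}.\]
Multiplied by the $\lesssim e^{\rho_{\SL_n}(v)T}$ bound on polynomials from (1), this gives $\lesssim e^{2\rho_{\SL_n}(v)T}$ in total; reducible characteristic polynomials, which split into blocks of degree $<n$, contribute no more. \emph{Main obstacle.} The sign-pattern control in the lower bound of (1) is the most delicate step: one must verify that a positive proportion of the totally real degree-$n$ fields contributing to \Cref{ms} have $m$ in the image of their signature map---a Galois-theoretic equidistribution of signatures across the family of all totally real number fields of degree $n$.
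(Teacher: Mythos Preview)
Your upper bound in (1) and lower bound in (2) match the paper's reasoning. The other two bounds diverge from the paper, and in each case your route introduces a genuine difficulty that the paper avoids.

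\textbf{Lower bound in (1).} The obstacle you flag is real but self-inflicted. Deducing this from \Cref{ms} forces you to carve out, inside the union $O_n^\times$, the units with prescribed signature $m$ and norm $+1$; since for a \emph{single} field $K$ the lattice $\Lambda_{K,\sigma}(O_K^\times)$ meets the $\varepsilon$-ball $B_\varepsilon(Tv)$ in only boundedly many points, the coset argument ``restricting to a subgroup of index $\le 2^{n+1}$ costs a bounded factor'' does not survive the passage from a single $K$ to the union over all $K$ --- you would indeed need the signature-equidistribution statement you identify. The paper bypasses this entirely: it works directly with the coefficient box $\mathscr P_T(v,m;\varepsilon)$ (equivalently $\mathscr Q_T(v,m;\varepsilon)$) of monic integral polynomials whose roots are forced, via Rouch\'e, to lie near $m_ie^{Tv_i}$. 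The sign pattern $m$ is thus baked into the box from the start, and taking companion matrices $C_f\in\SL_n(\Z)$ (determinant $=\prod m_i=1$) gives the lower bound with no reference to unit groups or their signatures. In fact the paper proves \Cref{ms} \emph{from} this polynomial count, not the other way around.

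\textbf{Upper bound in (2).} Here your Latimer--MacDuffee plus Minkowski approach is genuinely different from the paper's, which instead compares Jordan and Cartan projections: every loxodromic conjugacy class has a representative $\gamma'$ with $\|\lambda(\gamma)-\mu(\gamma')\|\le C$ (using that compact $AM$-orbits in $\Gamma\backslash G$ meet a fixed compact set), and then lattice-point counting in Cartan tubes via mixing \`a la Eskin--McMullen gives $\ll e^{2\rho(v)T}$. The paper's argument works for any lattice in any semisimple $G$; yours is specific to $\SL_n(\Z)$. Your route is plausible but, as written, has two gaps: the Minkowski-type bound $h(\Z[u])\ll|\operatorname{disc}(p)|^{1/2}$ must be established for arbitrary (possibly non-maximal) orders, and ``reducible characteristic polynomials contribute no more'' is not justified --- Latimer--MacDuffee does not apply to reducible $p$, and the $\SL_n(\Z)$-conjugacy classification of block-triangularizable loxodromic matrices involves additional extension data beyond the blocks' class numbers.
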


        Observe that $2\rho_{\SL_n}(v)$ is precisely the volume growth exponent
for the thin tubes around the ray $\mathbb R_{+}v$: if $\mu(g)\in \mathsf H_+$ is the Cartan projection of $g$, i.e., the unique element such that
$e^{\mu(g)}\in \SO(n) g \SO(n)$,
$$
  \operatorname{Vol}\Bigl\{
     g\in\SL_n(\mathbb R):\,
     \|\mu(g)-T v\|\le\varepsilon
  \Bigr\} \asymp_\e e^{\,2\rho_{\SL_n}(v)T},
$$ 
with volume taken with respect to a Haar measure of $\SL_n(\br)$
(cf.\ the proof of Theorem~\ref{cartan}).
Consequently, the Jordan–projection count in part~(1) grows like the
\emph{square root} of this ambient volume growth.

Because each eigenvalue pattern determines an
$\SL_n(\R)$-conjugacy class, this yields a lower bound of order
$e^{\rho_{\SL_n}(v)T}$, as stated in part (2), for the number of
$\SL_n(\Z)$-conjugacy classes with prescribed eigenvalue pattern; we also obtain  an upper bound of order $e^{2\rho_{\SL_n}(v)T}$.

Similar to the directional entropy for $O_n^\times$, we also propose the following notion of the directional entropies for $\SL_n(\z)$:
\begin{definition}[Directional entropy for $\SL_n(\mathbb Z)$]\label{dir}
Let $v\in\mathsf H_+$ and let $m=(m_1, \cdots, m_n)\in \{\pm 1\}$ satisfy $\prod_{i=1}^n m_i=1$.
Define the \emph{upper} and \emph{lower} directional entropies by
\begin{align*}
  \overline{\mathsf E}_{\SL_n(\mathbb Z)}(v,m)
  &  :=\|v\|\cdot  \lim_{\varepsilon\to0}\;
      \limsup_{T\to\infty}\frac{\log N_\varepsilon(T,v, m)}{T}; \\
  \underline{\mathsf E}_{\SL_n(\mathbb Z)}(v, m)
 &   :=\|v\|\cdot  \lim_{\varepsilon\to0}\;
      \liminf_{T\to\infty}\frac{\log N_\varepsilon(T,v, m)}{T}
\end{align*}
where  $$N_\varepsilon(T,v,m)\:=
     \#\bigl\{\lambda(\gamma):\gamma \in\SL_n(\mathbb Z):
            \|\lambda(\gamma)-\br_+v\|\le\varepsilon,   \|\lambda(\gamma)\|\le T, m(\ga)=m\bigr\}.$$
Similarly, set
\begin{align*}
  \overline{\mathsf E}^{\!\star}_{\SL_n(\mathbb Z)}(v, m) &
    :=\|v\|\cdot  \lim_{\varepsilon\to0}\;
      \limsup_{T\to\infty}\frac{\log M_\varepsilon(T,v,m)}{T}; \\
  \underline{\mathsf E}^{\!\star}_{\SL_n(\mathbb Z)}(v, m)
   & :=\|v\|\cdot  \lim_{\varepsilon\to0}\;
      \liminf_{T\to\infty}\frac{\log M_\varepsilon(T,v,m)}{T}
\end{align*}
where $$M_\varepsilon(T,v,m):=
     \#\bigl\{[\gamma]\in[\SL_n(\mathbb Z)]:
              \|\lambda(\gamma)-\br_+v\|\le\varepsilon,  \|\lambda(\gamma)\|\le T, m(\ga)=m\bigr\}.$$
As before, these quantities in $\{-\infty\}\cup [0, \infty)$ are norm-independent and homogeneous of degree one.
When the lower and upper limits agree, we write
$\mathsf E_{\SL_n(\mathbb Z)}(v)$ and
$\mathsf E^{\!\star}_{\SL_n(\mathbb Z)}(v)$, respectively.
\end{definition}

As an immediate consequence of Theorem \ref{strong},
we get \begin{theorem}\label{ab}
Let $v\in\mathsf H_{+}$ and
$m\in\{\pm1\}^{n}$ with
$\prod_{i=1}^{n}m_{i}=1$.
Then
$${\mathsf E}_{\SL_n(\mathbb Z)}(v, m)
    =\rho_{\SL_n}(v) ;$$
$$
\rho_{\SL_n}(v)
    \;\le\;
    \underline{\mathsf E}^{\!\star}_{\SL_n(\mathbb Z)}(v,m)
    \;\le\;
    \overline{\mathsf E}^{\!\star}_{\SL_n(\mathbb Z)}(v,m)
    \;\le\;
    2 \rho_{\SL_n}(v).
$$
\end{theorem}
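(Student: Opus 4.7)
The plan is to deduce Theorem \ref{ab} directly from Theorem \ref{strong} by a routine covering argument: the tubular cap
\[
R_\e(T)\;:=\;\bigl\{x\in\mathsf H\,:\,\|x-\br_+v\|\le\e,\;\|x\|\le T\bigr\}
\]
is covered by finitely many small balls centered on the ray $\br_+v$, and the Jordan-projection and conjugacy-class counts inside each ball are controlled by Theorem \ref{strong}(1) and (2) respectively. The main thing to watch is that all error constants remain uniform in $T$ so they vanish after dividing by $T$ and taking $\limsup/\liminf$ in $T$, before letting $\e\to 0$.

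\textbf{Upper bound.} Fix $\delta>0$ and set $s_k=k\delta$ for $k=0,1,\dots,\lceil(T+\e)/(\delta\|v\|)\rceil$. Any $x\in R_\e(T)$ has some $s\ge 0$ with $\|x-sv\|\le\e$, and the triangle inequality forces $s\le(T+\e)/\|v\|$; choosing $s_k$ nearest to $s$ gives $\|x-s_kv\|\le\e':=\e+\tfrac12\delta\|v\|$. Applying Theorem \ref{strong}(1) to each ball $B_{\e'}(s_kv)$ and summing the resulting geometric series in $e^{\rho_{\SL_n}(v)s_k}$ yields
\[
N_\e(T,v,m)\;\le\;C_{\e,\delta}\,e^{\rho_{\SL_n}(v)(T+\e)/\|v\|},
\]
hence $\|v\|\cdot\limsup_{T\to\infty}T^{-1}\log N_\e(T,v,m)\le\rho_{\SL_n}(v)$. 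Repeating the identical cover but invoking the upper half of Theorem \ref{strong}(2) (with exponent $2\rho_{\SL_n}(v)$) in place of Theorem \ref{strong}(1) produces $\|v\|\cdot\limsup_T T^{-1}\log M_\e(T,v,m)\le 2\rho_{\SL_n}(v)$.

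\textbf{Lower bound.} Set $t_T=(T-\e)/\|v\|$, so that the single ball $B_\e(t_Tv)$ lies inside $R_\e(T)$. Theorem \ref{strong}(1) then yields
\[
N_\e(T,v,m)\;\ge\;c_\e\,e^{\rho_{\SL_n}(v)t_T},
\]
so $\|v\|\cdot\liminf_T T^{-1}\log N_\e(T,v,m)\ge\rho_{\SL_n}(v)$; the same single-ball estimate together with the lower bound of Theorem \ref{strong}(2) gives $\|v\|\cdot\liminf_T T^{-1}\log M_\e(T,v,m)\ge\rho_{\SL_n}(v)$.

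Letting $\e\to 0$ and combining the matching bounds for $N_\e$ yields $\mathsf E_{\SL_n(\z)}(v,m)=\rho_{\SL_n}(v)$, and combining the (mismatched) bounds for $M_\e$ produces the sandwich $\rho_{\SL_n}(v)\le\underline{\mathsf E}^{\!\star}_{\SL_n(\z)}(v,m)\le\overline{\mathsf E}^{\!\star}_{\SL_n(\z)}(v,m)\le 2\rho_{\SL_n}(v)$. There is no genuine obstacle: Theorem \ref{strong} does all the analytic work, and Theorem \ref{ab} is essentially a formal entropy repackaging of the ball counts. The only point requiring mild care is that the constants $C_{\e,\delta}$ and $c_\e$ are allowed to depend on $\e$ (and $\delta$) but not on $T$, so they disappear in the $T^{-1}\log$ limit.
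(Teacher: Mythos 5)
Your proof is correct and takes the same route as the paper, which simply declares Theorem~\ref{ab} an ``immediate consequence'' of Theorem~\ref{strong} without supplying the covering argument. Your elaboration is exactly the standard way to pass from a ball-count estimate at $Tv$ to the truncated-tube entropy; the only implicit point (which your $C_{\e,\delta}$ correctly absorbs) is that Theorem~\ref{strong} applies only for $s_k\ge T_0(v,\e')$, so the finitely many small $s_k$ contribute an $O(1)$ term that vanishes in the $T^{-1}\log$ limit.
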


We think that $\mathsf E^\star_{\SL_n(\z)}(v, m)= 2 \rho_{\SL_n}(v)$ should be true, although we do not know how to prove this; see Conjecture \ref{cj} for a more general formulation.

\medskip
Denote by
$\SL_n(\z)_{\lox}$ (resp. $[\SL_n(\z)]_{\lox}$) the set  (resp. the  set of $\SL_n(\z)$-conjugacy classes) of  loxodromic elements of $\SL_n(\z)$. For $\ga\in \SL_n(\z)_{\lox}$,  write
$$[\ga]_\br\subset \SL_n(\z) \quad\quad\text{and}\quad\quad [\ga]_\q\subset \SL_n(\z)$$
for its  $\SL_n(\br)$- and $\SL_n(\q)$-conjugacy classes, respectively.
Because the centralizer of $\ga\in \SL_n(\z)_{\lox}$ is a maximal $\q$-split torus and all such tori are conjugate under $\SL_n(\q)$, we have
$$[\ga]_\br =[\ga]_\q .$$
Define the ``class number''
\be\label{hhh} h ( \gamma )=\# \{\SL_{n}(\mathbb
Z)\text{-conjugacy classes  inside $[\ga]_\q$}\} .\ee 
Since the eigenvalue pattern of a loxodromic element uniquely determines its $\SL_n(\br)$-conjugacy class, the map $\gamma\mapsto \cal E(\ga)$ gives a bijection
$$ \{[\ga]_\br: \ga\in \SL_n(\z)_{\lox}\} \Leftrightarrow \{\cal E(\ga): \ga\in \SL_n(\z)_{\lox}\}.$$
Hence for any region $R\subset \br^n$, 
$$
  \#\bigl\{
      [\gamma] \in [\SL_n(\z)]_{\lox}: \cal E(\gamma)\in R
  \bigr\}
  \;=\; \sum_{ \cal E(\gamma)\in R} h (\ga).
$$
In other words, the number of
$\SL_n(\z)$-conjugacy classes
whose eigenvalue pattern lies in $R$ equals the count of
those patterns, each weighted by its class number $h(\ga)$.

\begin{Rmk}    In view of Theorem \ref{strong},    one may expect that for any $\e>0$,    $$h ( \gamma )  \ll_\e e^{\rho_{\SL_n}(\lambda(\ga))(1+\epsilon)} $$      for all loxodromic $\ga\in \SL_n(\z)$.\end{Rmk}

For  any $\ga\in \SL_2(\z)$ with
$D=\op{tr}(\ga)^2-4$ square-free, the quantity
 $h(\ga)$ coincides with the classical class number $h_K=\#\op{Cl}(O_K)$
 of the quadratic field $K=\q(\sqrt D)$ (see \cite{LM}, \cite{Ta}, \cite{Ko}).
Moreover, the conjugacy classes $[\ga] \in [\SL_n(\z)]_{\lox}$  correspond bijectively to closed geodesics $C_\ga$ on the modular surface $\SL_2(\z)\ba \bH^2$, with length given by $ 2\lambda_1(\ga) $ \cite{Sa}. 
Hence the prime geodesic theorem on modular surface (\cite{Se}, \cite{He}) implies
$$\#\Bigl\{ [\ga] \in [\SL_2(\mathbb Z)]_{\op{lox}} :
             T-\e\le  \|\lambda (\ga) \|<T+\e   \Bigr\} \asymp_\e \frac{  e^{2T}}{2T} .$$
  In this case, Theorem \ref{strong} gives 
$$\#\Bigl\{ \cal E(\ga):\ga  \in \SL_2(\mathbb Z)_{\op{lox}} :
             T-\e\le  \|\lambda (\ga) \| \le T +\e \Bigr\} \asymp_\e  e^T,$$
              which also follows from the  elementary fact that
              $e^{\|\lambda(\ga) \|}$ is essentially the size of the (integral)
              trace of $\ga$.
\begin{Rmk}
Eskin-Mozes-Shah studied a \emph{transversal} counting
problem in \cite{EMS}.  Fix a loxodromic element $\gamma_{0}\in\SL_{n}(\mathbb Z)$ and
let $p\in\mathbb Z[x]$ be its characteristic polynomial.  Write $K=\mathbb Q(\alpha)$ for a root $\alpha$ of $p$. Assume that $p$ is irreducible over $\q$ and $\mathbb Z[\alpha]=\mathcal O_{K}$.
By \cite[Theorem 1.1]{EMS}, as $T\to\infty$,
$$
  \#\Bigl\{
      \gamma\in[\gamma_{0}]_{\mathbb R} :
      \|\gamma\|<e^{T}
    \Bigr\}
  \;\sim\footnote{We write $f(T)\sim g(T)$ if $\lim_{T\to \infty} f(T)/g(T)=1$.}\;
  c_{n}\,
  \frac{h(\gamma_{0})\,R_{K}}{\sqrt{\operatorname{Disc}(p)}}\;
  \exp \left({\tfrac12(n^{2}-n)T}\right),
$$
where
\begin{itemize}
  \item $c_{n}>0$ depends only on~$n$;
  \item $h(\gamma_{0})$ is the class number defined in~\eqref{hhh};
  \item
    $R_{K}$ is the regulator of~$K$, i.e.\ the volume of
    $\mathsf H/ \left(\Lambda_{K,\sigma}\bigl(O_{K}^{\times}\right))$;
\end{itemize}

Thus \cite{EMS} counts \emph{integral matrices lying inside a \emph{fixed}
$\SL_{n}(\mathbb R)$–conjugacy class}, whereas our results count
\emph{the number of \emph{distinct} $\SL_{n}(\mathbb Z)$–conjugacy
classes} whose Jordan projections fall into a given tube.

\end{Rmk}

\subsection*{Eigenvalue patterns in $\Sp_{2n}(\z)$}  We also carry out a parallel analysis for the symplectic lattice $\Sp_{2n}(\mathbb Z)$, obtaining
analogous counting results and entropy estimates.
Fix the symplectic form in~\eqref{sspp} so that
$$
  \fa^{+}
  \;=\;
  \Bigl\{
      v=\diag(v_{1},\dots,v_{n},-v_{n},\dots,-v_{1})
      :\;
      v_{1}\ge\cdots\ge v_{n}\ge 0
  \Bigr\}
$$ is
a positive Weyl chamber of $\Sp_{2n}(\mathbb R)$.
An element $g\in\Sp_{2n}(\mathbb R)$ is \emph{loxodromic} precisely when
its Jordan projection
$$
  \lambda(g)
  \;=\;
  \bigl(
     \lambda_{1}(g),\dots,\lambda_{n}(g),
     -\lambda_{n}(g),\dots,-\lambda_{1}(g)
  \bigr)
  \in\operatorname{int}\fa^{+}.
$$
For such $g$, set
$$
  m(g)=\bigl(m_{1}(g),\dots,m_{n}(g)\bigr)\in\{\pm1\}^{n},
$$
so that, for each $i$, the two real eigenvalues of $g$ are
$m_{i}(g)\,e^{\pm\lambda_{i}(g)}$.

 Let  
$$    \rho_{\Sp_{2n}}(v)=\sum_{i=1}^{n}(n+1-i)\,v_{i}$$ be the half-sum of all positive roots of $(\frak{sp}_{2n}(\br),\fa)$.
\begin{thm}\label{jm2}
  Let $v\in\operatorname{int}\fa^{+}$ and
  $m\in\{\pm1\}^{n}$.
Fix small $0<\varepsilon<1$.
\begin{enumerate}
    \item 
We have
  $$
    \#\Bigl\{
        (\lambda(\gamma),m(\gamma)):
        \gamma\in\Sp_{2n}(\mathbb Z),\;
        \|\lambda(\gamma)-Tv\|\le\varepsilon,\;
        m(\gamma)=m
      \Bigr\}
    \;\asymp_{\varepsilon}\;
    e^{\rho_{\Sp_{2n}}(v)T},
  $$   where explicit upper and lower
            multiplicative constants are given in Theorem \ref{jm2}.
\item  There exist $C_1, C_2>0$ such that for all sufficiently large $T>1$,
$$
            C_1 e^{\rho_{\Sp_{2n}}(v)T}   \le 
\#\Bigl\{ [\ga] \in [\Sp_{2n}(\mathbb Z)] :
              \|\lambda(\ga)-Tv\|\le\e, m(\ga)=m\Bigr\} \le 
              C_2 e^{2 \rho_{\Sp_{2n}}(v)T} .$$
              \end{enumerate}
              
\end{thm}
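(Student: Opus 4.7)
The argument parallels the proof of Theorem~\ref{strong}, with self-reciprocal polynomials replacing general characteristic polynomials. For a loxodromic $\gamma \in \Sp_{2n}(\mathbb{R})$, the symplectic condition forces $x^{2n}\chi_\gamma(1/x) = \chi_\gamma(x)$, so
\[
  \chi_\gamma(x) = x^n \, q_\gamma(x + x^{-1}), \qquad q_\gamma(y) = \prod_{i=1}^{n}\bigl(y - m_i(e^{\lambda_i} + e^{-\lambda_i})\bigr) \in \mathbb{Z}[y].
\]
For loxodromic $\gamma$ the pair $(\lambda(\gamma), m(\gamma))$ and the monic integer polynomial $q_\gamma$ of degree $n$ determine each other, so counting eigenvalue patterns reduces to counting admissible $q_\gamma$.

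\textbf{Upper bound in (1).} Write $\alpha_i = m_i(e^{\lambda_i}+e^{-\lambda_i})$. The coefficient of $y^{n-k}$ in $q_\gamma$ is $(-1)^k e_k(\alpha_1,\dots,\alpha_n)$; for $\|\lambda(\gamma)-Tv\|\le\varepsilon$ and $m(\gamma)=m$, its dominant term is $m_1\cdots m_k\, e^{T(v_1+\cdots+v_k)}$, placing it in an interval of length $O_\varepsilon(e^{T(v_1+\cdots+v_k)})$. Multiplying over $k = 1,\dots,n$ and using $\sum_{k=1}^n(v_1+\cdots+v_k) = \sum_{i=1}^n(n+1-i)v_i$ produces the bound
\[
  \ll_\varepsilon \exp\Bigl(T\sum_{i=1}^n(n+1-i)v_i\Bigr) = e^{\rho_{\Sp_{2n}}(v)T}.
\]

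\textbf{Lower bound in (1).} Reversing the count: for each integer tuple in the window above, the resulting $q$ has $n$ distinct real roots within $O(\varepsilon)$ of the widely separated targets $m_i(e^{Tv_i}+e^{-Tv_i})$ (this is where $v \in \operatorname{int}\fa^+$ enters), so $p(x) = x^n q(x+x^{-1})$ has $2n$ distinct real roots $u_i^{\pm 1}$ with $\log|u_i|\approx Tv_i$ and signs $m_i$. One realizes $p$ as $\chi_\gamma$ for some $\gamma\in\Sp_{2n}(\mathbb{Z})$ by equipping the order $\mathbb{Z}[x]/(p)$ with the alternating form $\omega(a,b) = \Tr(\omega_0\,a\,\iota(b))$, where $\iota: x\mapsto x^{-1}$ is the self-reciprocal involution and $\omega_0$ is an $\iota$-anti-invariant element (for instance $\omega_0 = x - x^{-1}$); multiplication by $x$ preserves $\omega$ and becomes an element of $\Sp_{2n}(\mathbb{Z})$ in any integral symplectic basis. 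Counting the admissible tuples then yields $\gg_\varepsilon e^{\rho_{\Sp_{2n}}(v)T}$, matching the upper bound.

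\textbf{Bounds in (2) and the main obstacle.} The lower bound in (2) follows from (1), since distinct eigenvalue patterns lie in distinct $\Sp_{2n}(\mathbb{R})$-, hence $\Sp_{2n}(\mathbb{Z})$-, conjugacy classes. For the upper bound, the Cartan (KAK) decomposition gives $\operatorname{Vol}\{g\in\Sp_{2n}(\mathbb{R}):\|\lambda(g)-Tv\|\le\varepsilon\}\asymp_\varepsilon e^{2\rho_{\Sp_{2n}}(v)T}$, and a standard fundamental-domain argument bounds the total number of $\gamma \in \Sp_{2n}(\mathbb{Z})$ in this tube—and hence the number of conjugacy classes—by $\ll_\varepsilon e^{2\rho_{\Sp_{2n}}(v)T}$. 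The genuinely delicate point is the symplectic-integral realizability in the lower bound of (1): ensuring that a positive (or at least sub-exponentially small) proportion of the $e^{\rho_{\Sp_{2n}}(v)T}$ integer self-reciprocal polynomials in the window actually arise as characteristic polynomials of matrices in $\Sp_{2n}(\mathbb{Z})$. This amounts to controlling the unimodularity of the twisted-trace pairing on $\mathbb{Z}[x]/(p)$, which is governed by class-group data of the underlying totally real degree-$n$ field $\mathbb{Q}[y]/(q)$ and is the principal technical hurdle in adapting the $\SL_n$ argument to the symplectic lattice.
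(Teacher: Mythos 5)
Your plan is the right one and your upper bound for part (1) is correct, but there are two genuine gaps.

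\textbf{First gap (lower bound in (1)).} You correctly identify that the crux is realizing the reciprocal polynomials you count as characteristic polynomials of elements of $\Sp_{2n}(\mathbb Z)$, and you propose a twisted-trace pairing $\omega(a,b)=\Tr(\omega_0\,a\,\iota(b))$ on $\mathbb Z[x]/(p)$. But you then concede that controlling the unimodularity of this pairing is ``the principal technical hurdle,'' which it is---in general the trace form on $\mathbb Z[x]/(p)$ need not be unimodular, and making it so involves choosing $\omega_0$ (and possibly passing to a different $\Gal$-stable lattice) in a way tied to the different/discriminant and the class group. As written, this step of your argument is incomplete, and if it stood as your only route to the lower bound it would fail to produce the claimed $\gg_\varepsilon e^{\rho_{\Sp_{2n}}(v)T}$. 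The paper bypasses the hurdle entirely: it cites Theorem~\ref{every} (due to Yang, Margalit--Spallone, Ackermann), which states unconditionally that \emph{every} integral monic reciprocal polynomial of degree $2n$ is the characteristic polynomial of some element of $\Sp_{2n}(\mathbb Z)$. With that result in hand, the lower bound is immediate from the count of $\mathscr P^*_T(v,m;\varepsilon)$ (Lemma~\ref{cr1}) together with the root-approximation step (Corollary~\ref{r22}); no class-group or unimodularity analysis is needed. You should invoke that theorem rather than leaving the realizability as an open problem.

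\textbf{Second gap (upper bound in (2)).} You write that the Cartan decomposition gives $\operatorname{Vol}\{g\in\Sp_{2n}(\mathbb R):\|\lambda(g)-Tv\|\le\varepsilon\}\asymp_\varepsilon e^{2\rho_{\Sp_{2n}}(v)T}$ and then bound the number of $\gamma$ ``in this tube'' by a fundamental-domain argument. But the set $\{g:\|\lambda(g)-Tv\|\le\varepsilon\}$ is cut out by a condition on the \emph{Jordan} projection; it is a union of full conjugacy classes and therefore has infinite Haar measure, so this volume estimate is false as stated and the fundamental-domain count would also be infinite. The finite-volume set is $\{g:\|\mu(g)-Tv\|\le\varepsilon\}$ defined via the \emph{Cartan} projection, as in Theorem~\ref{cartan}. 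To pass from Jordan to Cartan you need the nontrivial step carried out in Corollary~\ref{com}: for each loxodromic conjugacy class $[\gamma]$ there is a representative $\gamma'\in[\gamma]$ with $\|\lambda(\gamma)-\mu(\gamma')\|\le C$, where $C$ depends only on $\Gamma$; this rests on the compactness statement of Theorem~\ref{tw} (any compact $AM$-orbit in $\Gamma\ba G$ meets a fixed compact set $Q$), together with \cite[Lemma 4.6]{Be}. Only after this reduction can one apply the Cartan-projection count of Theorem~\ref{cartan}. Your proposal omits this reduction, which is the substantive content of the upper bound in (2).

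As a minor remark, your reduction via $q_\gamma(y)=\prod(y-m_i(e^{\lambda_i}+e^{-\lambda_i}))$ is a perfectly good alternative parametrization, but the paper works directly with the reciprocal polynomial $p(x)$ and its coefficient box $\mathscr P^*_T(v,m;\varepsilon)$; both give the same exponent $\rho_{\Sp_{2n}}(v)$, and neither is materially simpler than the other.
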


Define the directional entropies
$
  \mathsf E_{\Sp_{2n}(\mathbb Z)}(v,m)
$
and
$
  \mathsf E^{\!\star}_{\Sp_{2n}(\mathbb Z)}(v,m)
$
exactly as in Definition~\ref{dir}, with $\SL_{n}(\mathbb Z)$ replaced
everywhere by $\Sp_{2n}(\mathbb Z)$.

\begin{cor}\label{jm333}
For all $v\in\operatorname{int}\fa^{+}$ and
$m\in\{\pm1\}^{n}$,
we have 
$${\mathsf E}_{\Sp_{2n}(\mathbb Z)}(v, m)
    =\rho_{\Sp_{2n}}(v) ;$$
$$
\rho_{\Sp_{2n}}(v)
    \;\le\;
    \underline{\mathsf E}^{\!\star}_{\Sp_{2n}(\mathbb Z)}(v,m)
    \;\le\;
    \overline{\mathsf E}^{\!\star}_{\Sp_{2n}(\mathbb Z)}(v,m)
    \;\le\;
    2 \rho_{\Sp_{2n}}(v).
$$
\end{cor}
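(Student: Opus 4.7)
The plan is to derive Corollary~\ref{jm333} from Theorem~\ref{jm2} by a standard ball-to-tube transfer, mirroring the way Theorem~\ref{ab} is deduced from Theorem~\ref{strong}.

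I would first compare tubes and balls geometrically. The truncated tube $\{x\in\fa^{+}:\|x-\br_+v\|\le\varepsilon,\ \|x\|\le T\}$ can be covered by $O(T/\varepsilon)$ balls of radius $2\varepsilon$ centered at points $t_{k}v$ with $t_{k}=k\varepsilon/\|v\|$, $k=0,1,\dots$, and conversely it contains the single ball $B_{\varepsilon}((T-\varepsilon)v/\|v\|)$. Both are elementary geometric observations.

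For the pattern entropy $\mathsf E_{\Sp_{2n}(\Z)}$, I apply Theorem~\ref{jm2} to each of the covering balls and to the enclosed ball: the eigenvalue-pattern count at center $sv$ is $\asymp_{\varepsilon} e^{\rho_{\Sp_{2n}}(v)s}$, so the two inclusions yield
$$ c_{\varepsilon}\,e^{\rho_{\Sp_{2n}}(v)(T-\varepsilon)/\|v\|} \;\le\; N_{\varepsilon}(T,v,m) \;\le\; C_{\varepsilon}\,T\,e^{\rho_{\Sp_{2n}}(v)T/\|v\|}. $$
Taking $\log$, dividing by $T$, sending $T\to\infty$ and $\varepsilon\to 0$, and multiplying by $\|v\|$ gives $\mathsf E_{\Sp_{2n}(\Z)}(v,m)=\rho_{\Sp_{2n}}(v)$. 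The conjugacy-class entropies are handled identically, using the two-sided bound of Theorem~\ref{jm2}, which produces
$$ c'_{\varepsilon}\,e^{\rho_{\Sp_{2n}}(v)(T-\varepsilon)/\|v\|} \;\le\; M_{\varepsilon}(T,v,m) \;\le\; C'_{\varepsilon}\,T\,e^{2\rho_{\Sp_{2n}}(v)T/\|v\|}, $$
whence $\rho_{\Sp_{2n}}(v)\le\underline{\mathsf E}^{\star}_{\Sp_{2n}(\Z)}(v,m)\le\overline{\mathsf E}^{\star}_{\Sp_{2n}(\Z)}(v,m)\le 2\rho_{\Sp_{2n}}(v)$.

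There is no genuine obstacle: all the substantive work is packaged into Theorem~\ref{jm2}, and what remains is pure bookkeeping. The only nuance worth mentioning is that the $\varepsilon$-dependent constants $c_{\varepsilon},C_{\varepsilon}$ and the polynomial prefactor $T/\varepsilon$ contribute only $o(T)$ on the log scale, so they evaporate after dividing by $T$ and sending $T\to\infty$ (before eventually letting $\varepsilon\to 0$). Exactly the same argument, applied to $\SL_{n}(\Z)$ in place of $\Sp_{2n}(\Z)$ and to Theorem~\ref{strong} in place of Theorem~\ref{jm2}, is what establishes Theorem~\ref{ab}; we are simply transporting that derivation across to the symplectic setting.
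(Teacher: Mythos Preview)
Your proposal is correct and matches the paper's approach: the paper states Corollary~\ref{jm333} as an immediate consequence of Theorem~\ref{jm2} without giving any details (exactly parallel to how Theorem~\ref{ab} is declared an immediate consequence of Theorem~\ref{strong}), and the ball-to-tube transfer you wrote out is precisely the routine bookkeeping that justifies this.
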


\medskip 
\noindent{\bf On the proof:} We outline the proof of Theorem \ref{strong}. The proof of Theorem \ref{ms} is entirely analogous; one simply uses
 the bijection between {\it primitive} units and their minimal polynomials.
Let $v\in \mathsf H_+$ and $m$ a sign pattern. 
We translate the geometric condition ``$\lambda(\ga)$ lies in $B_\e(Tv)$ with sign pattern $m$'' into a purely arithmetic statement about integral polynomials, and then we count those polynomials.  
For a loxodromic element $\ga\in \SL_n(\z)$, its eigenvalue pattern $\cal E(\ga)$
is equivalent to its characteristic polynomial $p_\ga(x)$.
Requiring $\lambda(\ga)\in B_\e(Tv)$ and $m(\ga)=m$ forces the roots of $p_\ga$ 
to satisfy $m_ie^{Tv_i} + O(\e e^{Tv_i})$, $1\le i\le n$.
Let $\cal Q_{T}(v,m;\e)$ denote the collection of all monic integral polynomials with this property.
Using Rouch\'e's theorem, we observe that
$p\in \cal Q_{T}(v,m;\e)$ iff each coefficient lies in an interval of length $(1+O(\e))e^{T(v_1+\cdots +v_i)}$. Hence $\cal Q_{T}(v,m;\e)$ coincides with an expanding box
$\cal P_{T}(v,m;\e)$  inside $\z^{n-1}$ whose side-lengths grow at precisely those exponential rates. 
Counting integral points in this expanding box 
  is governed by the square-root of the discriminant of the model polynomial  $q_{Tv,m}(x)=\prod_{i=1}^n (x-m_ie^{Tv_i})$ with $\op{Disc} (q_{Tv,m})\asymp e^{2\rho_{\SL_n}(v)T}$.
Exactly the same reasoning works for the symplectic lattice $\Sp_{2n}(\z)$. Here one exploits the fact that the characteristic polynomials of 
 $\Sp_{2n}(\z)$ matrices are {\it precisely} the integral monic reciprocal (palindromic) polynomials of degree $2n$  (\cite{Ya2}, \cite{MS}). Because the reciprocal property simply folds
the coefficient box in half, the counting again reduces to a volume estimate and the resulting exponent is $\rho_{\Sp_{2n}}(v)=\sum_{i=1}^n (n+1-i)v_i$.
For other arithmetic groups, no tidy description is available
for the integral polynomials that arise as
characteristic polynomials. Even in the case of integral orthogonal groups, a clean criterion necessary for this approach to work
does not seem to be known.

On the other hand, the upper bound for the conjugacy-class count in Theorem \ref{strong} is a special case of Theorem \ref{up}, which applies to any lattice in a semisimple real algebraic group. The proof proceeds by relating the Jordan projection to the Cartan projection and by applying the standard orbital-counting technique of Eskin-McMullen \cite{EM}, which exploits the mixing of the $G$-action on $\Ga\ba G$ and the strong wavefront lemma (\cite[Theorem 3.7]{GO}).

\medskip 

We conclude the introduction by formulating the following conjecture:
\begin{Con}\label{cj}
Let 
$\Gamma$ be an arithmetic lattice of a connected simple real algebraic group $G$.
Fix a positive Weyl chamber $\fa^+\subset \fa$ and let $\rho_G$ be the half-sum of all positive roots of $(\frak g, \frak a^+)$, where $\frak g=\op{Lie} G$.
For  $v\in\operatorname{int}\fa^{+}$, define  directional entropies
$\mathsf E_{\Gamma}(v)$ and
 $\mathsf E_{\Gamma}^{\star}(v)$ as in Definition \ref{dir2}.
Then
$$
  \mathsf E_{\Gamma}(v)=\rho_{G}(v) \quad\text{and }\quad  \mathsf E_{\Gamma}^{\star}(v)=2\,\rho_{G}(v).
$$
\end{Con}

If $G$ has  rank-one,
 the prime geodesic theorem for rank-one
locally symmetric manifolds (see, for instance, \cite{Se}, \cite{He}, \cite{Mat}, \cite{GW}, 
\cite{Ro}, \cite{MMO}, etc.) implies that $\mathsf E_{\Gamma}^{\star}(v)=2\rho_G(v)$.
As we shall see in Theorem~\ref{cartan}, the corresponding entropy $\mathsf E^\star_\Ga(v)$ defined
via the Cartan projection is always $2\rho_G(v)$; it seems plausible that the
Jordan and Cartan counts differ only by a polynomial factor, in which case the second equality in the above
would indeed hold.

\subsection*{ Acknowledgements}
I am grateful to Curt McMullen for many stimulating discussions and to Akshay Venkatesh for suggesting the polynomial-counting viewpoint that proved decisive in our entropy arguments. I also thank Emmanuel Breuillard, Sebastian Hurtado,  Dongryul Kim and Arul Shankar for helpful comments on this work.

\section{Root separations and Proof of Theorem \ref{ms}}
Let $n\ge 2$. As $T\to \infty$, the number of monic integral polynomials of degree $n$ whose
roots are bounded by
$e^T$ grows in the order of $e^{n(n+1)T/2}$. If we additionally require
the constant term to be $\pm 1$, the growth rate drops to
the order $e^{n(n-1)T/2}$. These orders remain unchanged when we
restrict to totally real polynomials \cite{AP}.

In this section, we fix a vector $v\in \mathsf H_+$ and a sign pattern $$m=(m_1, \cdots,  m_n) \in \{\pm 1\}^n,$$
and  count those polynomials
whose roots lie near the prescribed points
$$m_ie^{Tv_i}\quad 1\le i\le n,$$
up to an additive error order
$O(\e e^{Tv_i}) $ for a fixed $\e>0$. The proof relies on translating the information about the roots into precise size constraints on the polynomial's coefficients.

\begin{Def} For $\e>0$ and  $T>1$, denote by
$$\mathscr {Q}_{T}(v,m; \e) \quad\text{ (resp.  $\mathscr {Q}^{\irr}_{T}(v,m; \e) $)}$$ the set of all monic integral (resp. irreducible\footnote{Throughout the paper, irreducible means irreducible over $\z$}) polynomials
   with roots  $x_1, \cdots, x_n$ such that
   \be\label{xxxx} |x_i-m_ie^{Tv_i}|\le  \e e^{Tv_i} \quad\quad \text{for all }i=1,\cdots, n.\ee \end{Def} 

    Set \be\label{dv} \delta_v:=\min_{1\le i\le n-1} (v_1+\cdots + v_i ) \ee

 \begin{thm}\label{qq}  Let $\e>0$. As $T\to \infty$,
$$   \#  \mathscr {Q}_{T}(v,m; \e) \asymp_\e 
\exp { \tfrac{1}{2} \sum_{i<j} (v_i-v_j ) T } .$$
More precisely, there exist absolute constants $c_1, c_2>0$ such that for all $T$ large enough depending on $n$ and $\e$,
\begin{multline*}  \left(\tfrac{2\e}{ (n-1) 3^n} \right)^{n-1} e^{ \tfrac{1}{2} \sum_{i<j} (v_i-v_j ) T }   \left( 1 -
c_1 \tfrac{n}{\e}  e^{-\delta_v T}  \right) \le   \#  \mathscr {Q}_{T}(v,m; \e) 
\\ \le     (2\e)^{n-1} \, n!\, 
e^{ \tfrac{1}{2} \sum_{i<j} (v_i-v_j ) T }   \left( 1+ 
c_2 \tfrac{n}{\e} e^{-\delta_v T}  \right)  \end{multline*}


and 
\begin{multline*}  \left(\tfrac{2\e}{ (n-1) 3^n} \right)^{n-1} e^{ \tfrac{1}{2} \sum_{i<j} (v_i-v_j ) T }   \left( 1 -
c_1 \tfrac{2^n}{\e}  e^{-\min(\delta_v, \eta_v) T}  \right) \le   \#  \mathscr {Q}^{\irr}_{T}(v,m; \e) 
\\ \le     (2\e)^{n-1} \, n!\, 
e^{ \tfrac{1}{2} \sum_{i<j} (v_i-v_j ) T }   \left( 1+ 
c_2 \tfrac{2^n}{\e} e^{-\min(\delta_v,\eta_v)  T}  \right)  \end{multline*}

where $\eta_v>0$ is defined in \eqref{ev}.
  \end{thm}

This theorem follows from three lemmas \ref{r6}, \ref{r2} and \ref{r7} below.
   
To motivate  Definition \ref{pt}, let us first examine the size of each coefficient of the following reference polynomial
\be\label{qm} q_{T} (x)=q_{Tv, m}(x) \defeq \prod_{i=1}^n (x- m_i e^{Tv_i }).\ee 
Writing $q_{T}(x)=\sum_{k=0}^{n}(-1)^{\,n-k}b_{\,n-k}x^{k} = x^{n}-b_{1}x^{\,n-1}+b_{2}x^{\,n-2}
  -\cdots+(-1)^{n}b_{n},$ Vieta's formulas give:
$$
  b_{i}=
  \sum_{\substack{S\subset\{1,\ldots,n\}\\|S|=i}}
      \Bigl(\prod_{j\in S}m_{j}\Bigr)
      e^{T\sum_{j\in S}v_{j}} \quad\text{$1\le i\le n$.}
$$

Therefore, for any $0<\e<1$, there exists $T_1=T_1(v,\e)>0$  such that for all $T\ge T_1$ and
all  $1\le i\le n$, 
$$  (1-\e)   e^{T(v_1+\cdots +v_i) }\; \le\; b_{i}M_i \; \le \; (1+\e) e^{T(v_1+\cdots +v_i) }$$ 
where $M_i= \prod_{j=1}^i m_j$.

\begin{Def}\label{pt}
For $0<\e<1$, let $\mathscr {P}_{T}(v,m; \e)$ be the set  of all monic integral
 polynomials 
 $$p(x)=\sum_{i=0}^n  (-1)^{n-i} a_{n-i} x^i $$
such that  for all $1\le i\le n$, 
\be\label{pp}   (1-\e)   e^{T(v_1+\cdots +v_i) } \le a_{i}M_i \le (1+\e) e^{T(v_1+\cdots +v_i) }.\ee 

We also define $\mathscr {P}'_{T}(v,m; \e)$ to be the set  of all monic integral
 polynomials 
 $p(x)=\sum_{i=0}^n  (-1)^{n-i} a_{n-i} x^i $ such that 
 for all $1\le i\le n$, 
\be\label{pprime}   (1-(i+1) \e)   e^{T(v_1+\cdots +v_i) } \le a_{i}M_i \le (1+(i+1)\e) e^{T(v_1+\cdots +v_i) }.\ee

\end{Def}
Note that for all  $0<\e<1/(n+1)$, any $p\in \mathscr {P}'_{T}(v,m; \e)$ satisfies $a_n=M_n$.
Throughout the paper, we repeatedly use the following simple identity:
 \be\label{id} \sum_{i=1}^{n-1} (v_1+\cdots +v_i)= \tfrac{1}{2} \sum_{1\le i<j\le n} (v_i-v_j )=\sum_{i=1}^{n-1} (n-i)v_i\ee 
 where $\sum_{i=1}^n v_i=0$ is used.

 We immediately get the following by counting integral vectors in the axis-parallel boxes given by \eqref{pp} and
 \eqref{pprime} from the classical theorem of Davenport \cite{Da}.
Recall the constant $\delta_v>0$ from \eqref{dv}.
\begin{lem}\label{r6}  For any $ 0<\e<\frac{1}{n+1}$ and $T>1$ large enough, we have
we have 
\begin{multline*}   (2\e)^{n-1} 
e^{ \tfrac{1}{2} \sum_{i<j} (v_i-v_j ) T }  \left( 1- \tfrac{n}{\e} e^{-\delta_v T} \right) \le  \#  \mathscr {P}_{T}(v,m; \e) \\
\le  (2\e)^{n-1} 
e^{ \tfrac{1}{2} \sum_{i<j} (v_i-v_j ) T } \left( 1+ \tfrac{n}{\e} e^{-\delta_v T}\right) \end{multline*} 
and 
\begin{multline*}   (2\e)^{n-1} n!
e^{ \tfrac{1}{2} \sum_{i<j} (v_i-v_j ) T }  \left( 1- \tfrac{n}{\e} e^{-\delta_v T} \right) \le  \#  \mathscr {P}'_{T}(v,m; \e) \\
\le  (2\e)^{n-1} n!
e^{ \tfrac{1}{2} \sum_{i<j} (v_i-v_j ) T } \left( 1+ \tfrac{n}{\e} e^{-\delta_v T}\right) \end{multline*}

\end{lem}
\begin{proof}
Davenport's theorem \cite{Da} gives that for any axix-parallel box $B=\prod_{i=1}^d [a_i, b_i] \subset \br^d$,
\be\label{da} |\#(B\cap \z^d)-\text{Vol}(B)|\le  \sum_{i=1}^d \prod_{j\ne i} (b_j-a_j)=\sum_{i=1}^{d} \frac{\text{vol}(B)}{b_i-a_i}.\ee  Setting $E_i=e^{(v_1+\cdots +v_i)T}$ and $B=\prod_{i=1}^{n-1} [(1-\e )E_i, (1+\e) E_i]$, 
  the number $ \#  \mathscr {P}_{T}(v,m; \e)  $
  is same as $\# (\z^{n-1}\cap B)$ and hence the first claim follows by \eqref{id} and \eqref{da}. The second claim follows similarly.
\end{proof}

\begin{lem}[Root approximation] \label{r2} Let $c_n= (n-1) 3^{n}$.
For any $0<\e <1/(4c_n)$, there exists 
$T_0= T_0(v, \e)\ge 1$ such that for all $T\ge T_0$,
   every polynomial $p\in \mathscr {P}_{T}(v,m; \e) $ has $n$-distinct {\it real} roots $x_1, \cdots, x_n$ with 
   \be\label{xx} |x_i-m_ie^{Tv_i}|\le c_n \e e^{Tv_i} \quad\quad \text{for all }i=1,\cdots, n.\ee 
 
   Conversely,  any monic polynomial $p\in \z[x]$ with roots $x_1, \cdots, x_n$ satisfying \eqref{xx}
   belongs to  $\mathscr {P}'_{T}(v,m; \e)$. 

  In other words, for all $T$ sufficiently large,
   $$\mathscr {P}_{T}(v,m; \frac{\e}{c_n} )\subset \mathscr {Q}_{T}(v,m; \e ) \subset \mathscr {P}'_{T}(v,m;  \e).$$
\end{lem}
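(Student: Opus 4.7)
The plan is to treat $p\in\mathscr{P}_{T}(v,m;\e)$ as a small perturbation of the reference polynomial $q_{T}$ from~\eqref{qm} and locate the roots of $p$ by Rouch\'e's theorem on small circles around the explicit points $m_{i}e^{Tv_{i}}$. The bound~\eqref{pp} together with the analogous estimate on the $b_{i}$ computed just before Definition~\ref{pt} implies $|a_{i}-b_{i}|\le 2\e\,e^{T(v_{1}+\cdots+v_{i})}$, so the difference $p-q_{T}$ is a polynomial of degree less than $n$ whose $k$-th coefficient has size $O(\e)\,e^{T(v_{1}+\cdots+v_{k})}$. The rest is a quantitative Rouch\'e comparison.

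Fix $1\le i\le n$ and let $\Gamma_{i}=\{z\in\mathbb{C}:|z-m_{i}e^{Tv_{i}}|=c_{n}\e\,e^{Tv_{i}}\}$. For the lower bound on $|q_{T}|$ on $\Gamma_{i}$, I factor $q_{T}(z)=(z-m_{i}e^{Tv_{i}})\prod_{j\ne i}(z-m_{j}e^{Tv_{j}})$ and use the separation $|m_{i}e^{Tv_{i}}-m_{j}e^{Tv_{j}}|\ge\tfrac{1}{2}e^{T\max(v_{i},v_{j})}$, valid for $T$ large because $v_{1}>\cdots>v_{n}$. This yields
\[
|q_{T}(z)|\;\ge\;c_{n}\,\e\cdot 2^{-(n-1)}\,e^{T f(i)},
\qquad f(i):=(n-i+1)v_{i}+v_{1}+\cdots+v_{i-1}.
\]
For the upper bound on the perturbation, $|z|\le (1+c_{n}\e)e^{Tv_{i}}$ gives
\[
|p(z)-q_{T}(z)|\;\le\;2\e(1+c_{n}\e)^{n}\sum_{k=1}^{n}e^{T[(v_{1}+\cdots+v_{k})+(n-k)v_{i}]}\;\le\;C_{n}\,\e\,e^{T f(i)},
\]
since the exponential sum concentrates at $k=i-1$ and $k=i$ (both yielding $f(i)$), every other term being smaller by at least $e^{-T\delta}$ with $\delta:=\min_{j}(v_{j}-v_{j+1})>0$.

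Choosing $c_{n}=10(n-1)2^{n-1}$ makes $c_{n}\,2^{-(n-1)}>C_{n}$ uniformly in $i$, so $|q_{T}(z)|>|p(z)-q_{T}(z)|$ on $\Gamma_{i}$ for all $T\ge T_{0}(v,\e)$ and $\e$ small. Rouch\'e's theorem then yields exactly one root of $p$ in each closed disk $D_{i}$ bounded by $\Gamma_{i}$. The $D_{i}$ are pairwise disjoint, since the centers are separated by distances $\gtrsim e^{T\max(v_{i},v_{j})}$ while the radii are only $c_{n}\e\,e^{Tv_{i}}$, producing $n$ distinct roots. Each $D_{i}$ is symmetric under complex conjugation and contains a unique root of the real polynomial $p$; that root must equal its own conjugate and is therefore real. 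This proves $\mathscr{P}_{T}(v,m;\e)\subset\mathscr{Q}_{T}(v,m;O(\e))$.

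For the converse inclusion, I invoke Vieta: $a_{i}=e_{i}(x_{1},\dots,x_{n})$. Writing $x_{j}=m_{j}e^{Tv_{j}}(1+O(\e))$, the subset $S=\{1,\dots,i\}$ dominates $\sum_{|S|=i}\prod_{j\in S}x_{j}$ because $v_{1}>\cdots>v_{n}$: every other $S$ of size $i$ contributes at most $e^{-T\delta}$ times the dominant term. Hence $a_{i}M_{i}=e^{T(v_{1}+\cdots+v_{i})}(1+O(\e))$ for $T$ large, placing $p$ in $\mathscr{P}_{T}(v,m;c\e)$ for an absolute constant $c$. The main obstacle is the Rouch\'e comparison itself: both $|q_{T}|$ and $|p-q_{T}|$ carry the same exponential growth rate $e^{T f(i)}$ on $\Gamma_{i}$, so the strict inequality rests entirely on the prefactor $c_{n}\e$ beating $C_{n}\e$ uniformly in $i=1,\dots,n$; tracking the polynomial-in-$n$ constants to ensure the choice $c_{n}=10(n-1)2^{n-1}$ works for every $i$ is where the bookkeeping concentrates.
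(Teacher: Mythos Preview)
Your proposal is correct and follows essentially the same route as the paper: Rouch\'e on discs $D_i$ centered at $m_i e^{Tv_i}$, a lower bound for $|q_T|$ on $\partial D_i$ via the root separations $|m_ie^{Tv_i}-m_je^{Tv_j}|\gtrsim e^{T\max(v_i,v_j)}$, an upper bound for $|p-q_T|$ from the coefficient estimate $|a_k-b_k|\le 2\e\,e^{T(v_1+\cdots+v_k)}$, and then the observation that the unique root in each conjugation-symmetric disc must be real; the converse is Vieta. The only substantive difference is in how the exponential sum $\sum_k e^{T[(v_1+\cdots+v_k)+(n-k)v_i]}$ is controlled: the paper shows term-by-term that each summand is $\le e^{Tf(i)}$ (its $\Delta_{i,j}\ge 0$ computation), yielding the clean bound $(n-1)e^{Tf(i)}$, whereas you argue the sum concentrates on the two maximizing indices $k=i-1,i$ for $T$ large. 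Two small points worth tightening: your lower bound for $|q_T|$ should carry the factor $(\tfrac12-c_n\e)^{n-1}$ rather than $2^{-(n-1)}$ (you need to subtract the radius $c_n\e\,e^{Tv_i}$ from each separation), and the sum really runs over $1\le k\le n-1$ since $a_n=b_n=M_n$ for small $\e$; neither affects the argument once $\e$ is taken small.
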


\begin{proof} The second statement is a simple consequence of Vieta's formulas. 
Let $w_{i}=\sum_{j=1}^{i} v_j$ for each $1\le i\le n-1$. Let $q_T$
be as in \eqref{qm} so that $q_T(x)=\sum_{i=0}^n (-1)^{n-i} b_{n-i}x^i$.
So for all $T\ge T_1(v, \e)$ and
for all $1\le i\le n-1$, 
$$  e^{Tw_{i} -\e} \le b_{i}M_i\le e^{Tw_{i}+\e}.$$
By increasing $T_1$ if necessary, we may assume that $e^{Tv_i}\ge 3 e^{Tv_{i+1}}$ and
$e^{Tw_i}\ge 3 e^{Tw_{i+1}}$ for all $i$. Consequently
\(
   |e^{T v_i}-e^{T v_{i+1}}|\ge\bigl(\tfrac23\bigr)e^{T v_i}.
\)

Fix $0<\e < (4c_n)^{-1}$. Then we have  
\be\label{cc} 3(n-1)(1+c_n\e)^{n-1} < c_n(2/3-c_n\e)^{n-1}.\ee 
To check this, we note that 
$f(\epsilon)=  \frac{3(n-1)(1+c_n\e)^{n-1} }{ c_n(2/3-c_n\e)^{n-1}}$ is a strictly increasing function on the interval $(0, (4c_n)^{-1})$ and $f((4c_n)^{-1})=1$.

For each $1\le j\le n$, consider the discs
$$D_j =\{ x\in \mathbb C: |x- m_j e^{Tv_j}| \le  c_n \e  e^{Tv_j}\}.$$
Since $c_n \e \le 1/4$, we have $(1+c_n\e) e^{Tv_{j+1} } < (1-c_n\e) e^{Tv_j}$ for all $j$ and hence these discs are pairwise disjoint.

Let  $p_{T}(x) =\sum_{i=0}^n  (-1)^{n-i} a_{n-i} x^i $ be a polynomial in $\mathscr {P}_{T}(v,m; \e)$. We claim that for all $T$ sufficiently large,  $p_T(x)$ has precisely one root inside each disc $D_j$.
Write
$$\Delta_{T}(x)\coloneqq q_{T}(x)-p_{T}(x) =\sum_{i=1}^{n-1} (-1)^{n-i} (b_{n-i}-a_{n-i})  x^i.$$
From~\eqref{pp},
$\lvert b_{\,n-i}-a_{\,n-i}\rvert\le 3\e\,e^{T w_{n-i}}$.
Hence for all $x\in\partial D_{j}$, we have
\begin{equation}\label{dt}
  \lvert\Delta_{T}(x)\rvert
     \le 3\e\bigl(1+c_{n}\e\bigr)^{\,n-1}
          \sum_{i=1}^{n-1}e^{T w_{n-i}+i T v_{j}}.
\end{equation}
On the other hand,
$$|q_T(x) |=\prod_{i=0}^n |x-m_i e^{Tv_i }| = c_n\e e^{Tv_j} \prod_{i<j} |x-m_ie^{Tv_i }|\cdot \prod_{i>j} |x-m_ie^{Tv_i }| .$$
For $i<j$,
$$ |x-m_i e^{Tv_i }|\ge |m_ie^{Tv_i}-m_j e^{Tv_j}| -|x-m_je^{Tv_j}|\ge \frac{2}{3} e^{Tv_i} - c_n\e e^{Tv_j} \ge (\frac{2}{3} -c_n\e) e^{Tv_i} .$$
For $i>j$, we similarly have
$$ |x-m_i e^{Tv_i }|\ge (\frac{2}{3} -c_n\e) e^{Tv_j} .$$
Hence
\be\label{qt} |q_T(x) | \ge c_n\e \left( \frac{2}{3} -c_n\e \right)^{n-1} e^{T (\sum_{i<j} v_i) + (n-j+1) T v_j}.\ee 
Let $$S(T, j)=\sum_{i=1}^{n-1}
e^{ T w_{n-i} + i Tv_j} \; \;\text{and} \;\;  R(T, j)=  e^{T (\sum_{k<j} v_k) + (n-j+1) T v_j}.$$
Let $$\Delta_{i, j}=\left(\sum_{k<j} v_k +(n-j+1)v_j \right) -(v_1+\cdots+ v_{n-i} + i v_j)$$ so that  \be\label{st} \frac{S(T, j)}{R(T, j)} =\sum_{i=1}^{n-1} e^{-T \Delta_{i, j}}. \ee 

We check that $\Delta_{i, j}\ge 0$ by writing 
\be \Delta_{i, j}= \begin{cases}  (n-i -(j-1)) v_j - (v_j+\cdots +v_{n-i})\ge 0 & \text{ if $n-i\ge j-1$,}\\ 
 (v_{n-i+1} +\cdots +v_{j-1}) - ((j-1)-(n-i)) v_j  \ge 0 & \text{ otherwise}.\end{cases}\ee 
Hence by \eqref{st},
we have $$ \frac{S(T, j)}{R(T, j)} \le n-1. $$  
Therefore by \eqref{dt} and \eqref{qt}, and since $\e$ satisfies \eqref{cc},
we get that for all $x\in \partial D_j$, 
\begin{multline*}
    |\Delta_T (x)| \le  3\e (1+c_n\e)^{n-1} S(T, j) \le  (n-1) 3\e (1+c_n\e)^{n-1} R(T, j)
\\
< c_n\e (2/3-c_n\e)^{n-1} R(T, j) \le |q_T(x)|,
\end{multline*}
and hence
$$|\Delta_T(x)| < |q_T(x)| .$$
Hence by Rouch\'e's theorem (cf. \cite{Ah}), two polynomials $q_T(x)$ and $p_T(x)$ have the same number of zeros (counted with  multiplicity) inside each $D_j$.
Since $D_j$ are pairwise disjoint and  $q_T$ has exactly one root in each $D_j$, the same holds for $p_T$. Since $p_T$ has real coefficients
and each $D_j$ is invariant under complex conjugation, 
$p_T(x)$ has one {\it real} root $x_j$ such that 
$$|x_j-m_je^{Tv_j}| \le c_n \e e^{Tv_j}.$$
Hence $p_T\in   \mathscr {Q}_{T}(v,m; c_n\e )$.
This finishes the proof.
\end{proof}

Denote by $\op{Disc}(p)=\prod_{i\ne j}(x_i-x_j)=\prod_{i<j} (x_i-x_j)^2$
the discriminant of a polynomial $p$ with roots $x_1, \cdots, x_n$. 
For the polynomial $q_{Tv, m}(x)=\prod_{i=1}^n (x-m_i e^{Tv_i})$,
its discriminant $\op{Disc} (q_{Tv, m})$ satisfies
$$ {\op{Disc} (q_{Tv,m})} =  e^{ \left(\sum_{1\le i<j\le n} v_i \right) T} \left( 1+O(e^{-\eta T})\right)\quad\text{ for some $\eta>0$} $$
and hence
$$\lim_{T\to\infty}\tfrac{1}{T} \log {\op{Disc} (q_{Tv,m})} = \sum_{i<j} (v_i-v_j ).$$

The following is a simple consequence of Lemma \ref{r2}:
\begin{cor}
For all small $\e >0$, there exist
$T_0= T_0(v, \e)>0$ such that for all $T\ge T_0$,
   every polynomial $p_T\in \mathscr {P}_{T}(v,m; \e)$ satisfies
   $$(1-C_n\e)  e^{\sum_{i<j} (v_i-v_j) T} \le \op{Disc}(p_T) \le (1+C_n \e) e^{\sum_{i<j} (v_i-v_j)T}$$
   where $C_n=n(n-1)/2$.
   In particular, $$\lim_{T\to\infty}\tfrac{1}{T} \log {\op{Disc} (p_{T})}= \sum_{i<j} (v_i-v_j ) .$$
\end{cor}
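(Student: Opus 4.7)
The plan is to compare $\op{Disc}(p_T)$ with the discriminant of the reference polynomial $q_T=q_{Tv,m}$ from \eqref{qm}, whose roots are exactly $m_ie^{Tv_i}$. First I would compute $\op{Disc}(q_T)$ directly. Since $v\in\mathsf H_+$, for every pair $i<j$ we have $v_i>v_j$, and regardless of the signs $m_i,m_j\in\{\pm1\}$,
$$|m_ie^{Tv_i}-m_je^{Tv_j}|=e^{Tv_i}\bigl(1+O(e^{-T(v_i-v_j)})\bigr).$$
Squaring and multiplying over the $\binom{n}{2}$ pairs yields $\op{Disc}(q_T)=e^{2T\sum_{i<j}v_i}\bigl(1+O(e^{-\eta T})\bigr)$ for some $\eta>0$ depending only on $v$. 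Using the identity $2\sum_{i<j}v_i=\sum_{i<j}(v_i-v_j)$, which follows from $\sum_i v_i=0$ and has already been invoked repeatedly in this section, this rewrites as $\op{Disc}(q_T)=e^{T\sum_{i<j}(v_i-v_j)}(1+O(e^{-\eta T}))$.

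Next I would transfer this estimate from $q_T$ to $p_T$ using Lemma \ref{r2}. That lemma guarantees, for sufficiently small $\e$ and $T\ge T_0(v,\e)$, real distinct roots $x_1,\dots,x_n$ of $p_T$ with $|x_i-m_ie^{Tv_i}|\le c_n\e\,e^{Tv_i}$. Writing $x_i-x_j=(m_ie^{Tv_i}-m_je^{Tv_j})+\delta_{ij}$ with $|\delta_{ij}|\le 2c_n\e\,e^{Tv_i}$, and combining with the lower bound $|m_ie^{Tv_i}-m_je^{Tv_j}|\ge\tfrac12 e^{Tv_i}$ already used inside the proof of Lemma \ref{r2}, I obtain $x_i-x_j=(m_ie^{Tv_i}-m_je^{Tv_j})(1+O(\e))$ for each pair. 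Squaring and taking the product over all pairs then gives
$$\op{Disc}(p_T)=\op{Disc}(q_T)\cdot(1+O(\e))^{n(n-1)}.$$

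Combining the two displays and absorbing the factor $(1+O(e^{-\eta T}))$ into $(1+O(\e))$ once $T$ is large enough, I obtain $\op{Disc}(p_T)=e^{T\sum_{i<j}(v_i-v_j)}(1+O_n(\e))$. To convert this into the clean two-sided inequality $(1-\e)\le\op{Disc}(p_T)\,e^{-T\sum_{i<j}(v_i-v_j)}\le(1+\e)$ of the statement, I would simply rerun the argument with $\e$ replaced by $\e/C_n$ for a constant $C_n$ absorbing the combinatorial factor $n(n-1)$, and redefine $T_0$ accordingly. The in-particular limit $\tfrac1T\log\op{Disc}(p_T)\to\sum_{i<j}(v_i-v_j)$ then follows immediately upon taking logarithms. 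I expect no genuine obstacle here: all the delicate analysis was already carried out in Lemma \ref{r2} via Rouch\'e's theorem, and what remains is routine multiplicative error-propagation.
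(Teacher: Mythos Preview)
Your proposal is correct and follows exactly the approach the paper intends: the paper states the corollary as ``a simple consequence of Lemma~\ref{r2}'' and, immediately before it, records the asymptotic for $\op{Disc}(q_{Tv,m})$ that you rederive. Your error-propagation from $q_T$ to $p_T$ via the root bounds of Lemma~\ref{r2} is precisely the intended argument, and the rescaling trick $\e\mapsto\e/C_n$ to obtain the clean $(1\pm\e)$ form is the natural finishing step.
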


We will need the following estimates in the next lemma \ref{r7}:
\begin{lemma}\label{cal}
Let  $\{1,\dots,n\}=S_1\sqcup S_2$ be a partition into two non-empty subsets so that
  $\displaystyle\sum_{i\in S_j}v_i=0$ for $j=1,2$.  Writing
  $S_1=\{i_1<\dots<i_{\ell_1}\}$ and
  $S_2=\{j_1<\dots<j_{\ell_2}\}$ with $\ell_1+\ell_2=n$, we have
$$
     \sum_{k=1}^{\ell_1-1}(v_{i_1}+\cdots+v_{i_k})
     \;+\;
     \sum_{k=1}^{\ell_2-1}(v_{j_1}+\cdots+v_{j_k})
     \;<\;
     \sum_{k=1}^{n-1}(v_1+\cdots+v_k)\;. $$
\end{lemma}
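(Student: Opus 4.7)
My plan is to reduce both sides of the proposed inequality to a common double sum of the shape $(v_a - v_b)$. The main tool is the elementary rewriting
\[
  \sum_{k=1}^{m-1}(a_1+\cdots+a_k)\;=\;\tfrac12\sum_{1\le p<q\le m}(a_p-a_q),
\]
valid whenever $a_1+\cdots+a_m=0$. This is essentially the identity the paper has already used on the full vector $v$; it is obtained by exchanging the order of summation (giving coefficient $m-i$ on $a_i$) and then symmetrizing using $\sum a_i = 0$.

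I would apply this identity in three places: once to the full sequence $v_1,\dots,v_n$ to rewrite the right-hand side, and once to each of the sub-sequences $(v_{i_1},\dots,v_{i_{\ell_1}})$ and $(v_{j_1},\dots,v_{j_{\ell_2}})$ to rewrite the two summands on the left. The hypothesis that each block sums to zero is exactly what makes the identity applicable on each block. Moreover, because the indices $i_1<\cdots<i_{\ell_1}$ are listed in increasing order, the values $v_{i_k}$ are automatically in strictly decreasing order, and the partial sums appearing in the statement are the honest initial sums of the sub-sequence. The identity then converts the $S_t$-block into $\tfrac12\sum_{a<b,\,a,b\in S_t}(v_a-v_b)$ for $t=1,2$.

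After these rewritings, the right-hand side becomes the full double sum over all pairs $1\le a<b\le n$, while the left-hand side is that double sum restricted to pairs lying inside a single block. The difference is therefore the ``cross'' sum
\[
  \tfrac12\sum_{\substack{1\le a<b\le n\\ a,b\text{ in different blocks}}}(v_a-v_b),
\]
each of whose summands is strictly positive because $v_1>\cdots>v_n$. Non-emptiness of the sum follows immediately since both $S_1,S_2$ are non-empty, so there exists at least one pair $(a,b)$ with $a<b$ crossing the partition. This yields the strict inequality.

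There is essentially no obstacle here: the sole substantive point is the bookkeeping observation that the block-wise zero-sum hypothesis together with the increasing-index listing of each $S_t$ make the same quadratic identity applicable at all three levels; every other step is routine.
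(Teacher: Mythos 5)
Your proof is correct and is essentially the same argument as the paper's: both reduce each side to a double sum over ordered pairs and identify the difference as the strictly positive cross-block sum $\tfrac12\sum_{a<b,\,\text{cross}}(v_a-v_b)$. The only cosmetic difference is where the zero-sum hypothesis is invoked: you symmetrize each of the three single sums up front (using zero-sum once per block and once for the whole), while the paper first rewrites all three sums without symmetrizing (as $\sum_{a<b,\ldots} v_a$), computes the difference $D=\sum_{\text{cross}}v_a$, and only then uses the zero-sum condition once to show $D=-\sum_{\text{cross}}v_b$, hence $2D=\sum_{\text{cross}}(v_a-v_b)$.
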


\begin{proof}
We first rewrite the right hand side $\text{(RHS)}$ as
  $$
     \sum_{k=1}^{n-1}(v_1+\dots+v_k)
     =\sum_{k=1}^{n-1}\sum_{i=1}^{k}v_i
     =\sum_{i=1}^{n-1}(n-i)\,v_i
     =\sum_{1\le i<j\le n}v_i.
  $$
Similarly, the left hand side, for each $j=1,2$,
  $$
     \sum_{k=1}^{\ell_j-1}(v_{i_1}+\dots+v_{i_k})
     =\sum_{{i<j, i,j\in S_j}}v_i,
  $$
  and hence
  $$
     \text{LHS}
     =\sum_{{i<j, i,j\text{ in the same }S_\ast}}v_i.
  $$

Set
\be\label{SS} D_v(S_1, S_2):= \sum_{k=1}^{n-1}(v_1+\cdots+v_k) -\left( \sum_{k=1}^{\ell_1-1}(v_{i_1}+\cdots+v_{i_k})
     \;+\;
     \sum_{k=1}^{\ell_2-1}(v_{j_1}+\cdots+v_{j_k})\right) .\ee

Hence 
  $$ 
     D_v(S_1, S_2)
     =\text{(RHS)}-\text{(LHS)}
     =\sum_{{i<j,  S(i)\neq S(j)}}v_i,$$
  where $S(i)$ denotes the block containing $i$.
  Add the same pairs with the
  complementary index:
  $$
     \sum_{{i<j, S(i)\neq S(j)}}(v_i+v_j)
     =|S_2|\sum_{i\in S_1}v_i \;+\;|S_1|\sum_{j\in S_2}v_j
     =0,
  $$
  because each block has total sum $0$. Hence $D_v(S_1, S_2)=   -\sum_{{i<j,  S(i)\neq S(j)}}v_j$; so
  $$
     2D_v(S_2, S_2)
     =\sum_{{i<j, S(i)\neq S(j)}}(v_i-v_j).
  $$

  Since
  $v_1>\dots>v_n$, every difference $v_i-v_j\;(i<j)$ is
  strictly positive.  There is at least one cross pair (the blocks are
  non-empty), so $D_v(S_1, S_2) >0$.
\end{proof}

Define  \be\label{ev} \eta_v:=\min D_v(S_1, S_2) >0\ee 
where $D_v(S_1, S_2)$ is as in \eqref{SS} and
the minimum is taken over all non-trivial partitions of $\{1,\cdots, n\} =S_1\sqcup S_2$.
The function $v\mapsto \eta_v$ is clearly continuous on $\mathsf H_+$ and hence
$\min_{v\in Q} \eta_v>0$ for any compact subset $Q\subset \mathsf H_+$.
\begin{lem} \label{r7}  Let $0< \e<1$. As $T\to \infty$, the proportion of irreducible polynomials in $\mathscr {P}_{T}(v,m; \e) $ tends to $1$ exponentially fast:  there exists $T_0=T_0(n, \e)$ such that for all $T\ge T_0$,
$$
{\# \{p\in \mathscr {P}_{T}(v,m; \e)\text{ irreducible}\} }=
{\# \mathscr {P}_{T}(v,m; \e)}\cdot \left( 1 +O(2^{n} \e^{-1} e^{-\eta_v T})\right) $$
where the implied constant is an absolute constant. 
The same type of estimate holds for $\mathscr {P}'_{T}(v,m; \e)$.
\end{lem}

\begin{proof} Let $T\ge T_0(v,\e) $ be as in  Lemma \ref{r2}. 
For any $p \in \mathscr {P}_{T}(v,m; \e)$,
 by Lemma \ref{r2},
$p$ has distinct roots $x_1, \cdots, x_n$ such that
 $|x_i-m_ie^{Tv_i}|\le c_n \e e^{Tv_i}$ for each $1\le i\le n$.
Suppose that $p \in \mathscr {P}_{T}(v,m; \e)$ is reducible over $\z$.  
We then  have a partition of $\{1, \cdots, n\}$ as the disjoint union  $S_1\sqcup S_2$ of non-empty subsets  such that $p(x)= f_1(x) f_2(x)$
where $f_j(x)=\prod_{k \in S_j} (x- x_k)\in \mathbb Z[x]$ for $j=1,2$.  
List elements of $S_j$
as ${j_1}> {j_2}>\cdots >{j_{\ell_j}}$. Let $u_j= (v_{j_1}, \cdots, v_{j_{\ell_j}})$ and 
$M'_j= (m_{j_1}, \cdots, m_{j_{\ell_j}}) $.
  It follows that
 $$f_j\in \mathscr P_T(u_j,M'_j; \e) .$$ Hence for all sufficiently large $T$,
by Lemma \ref{r6}, for all sufficiently large $T\ge 1$, we have 
$$ \# \mathscr P_{T}(u_j, M_j'; \e) \le  2 (2\e)^{\ell_j-1}  e^{T \sum_{k=1}^{\ell_j-1} (v_{j_1}+\cdots + v_{j_k}) } .$$ 
Since the constant terms of $f_1$ and $f_2$ are $\pm 1$, we have  $\sum_{i\in S_j} v_i = 0$. 
By Lemma \ref{cal}, we have
$$ 
\sum_{k=1}^{\ell_1-1} (v_{1_1}+\cdots + v_{1_k}) +\sum_{k=1}^{\ell_2-1} (v_{2_1}+\cdots + v_{2_k})
 <  \sum_{k=1}^{n-1} (v_1+\cdots +v_k) , $$
 that is, $D_v(S_1, S_2)>0$.
 Therefore
 we have $$ \frac{ \# \mathscr P_{T} (u_1,M_1'; \e)  \cdot \# 
 \mathscr P_T (u_2,M_2'; \e)  }{ \#\mathscr P_{T}(v,m ; \e) }\le 8 \e^{-1} e^{-D_v(S_1, S_2) T}. $$

Since this holds for any non-trivial partition of $\{1, \cdots, n\}$ into two non-empty subsets and there are at most $2^{n-1}$ number of such
partitions,
this proves the first claim by the definition of $\eta_v$. The proof for $\mathscr P'_T(v,m;\e)$
is similar.
\end{proof}

\begin{proof}[Proof of Theorem \ref{qq}]
    The first claim follows from Lemma \ref{r6} and Lemma \ref{r2}.
    By Lemma \ref{r2} and Lemma \ref{r7}, for all $T$ sufficiently large, the cardinality of the
    set of {\it reducible} polynomials in $\mathscr {Q}_{T}(v,m; \e)$ is at most
    $ c 2^n \e^{-1}   e^{-\eta_v T} \cdot \# \mathscr {Q}_{T}(v,m; \e)$ for some absolute
    constant $c>0$. Hence the second claim follows from this and the first claim.
\end{proof}

\section{ Directional entropy of totally real algebraic units} We now apply our
polynomial analysis to compute the directional entropy for totally real units of degree $n$.
Fix $v\in \mathsf H_+$ and a sign pattern $m=(m_1, \cdots, m_n)\in \{\pm 1\}^n$.
We use the notation $\cal K_n, O_n^\times, \Sigma_K$, etc. from the introduction.
For simplicity, for $\u\in (O_K^\times, \sigma)$,
we write $\|\sigma(\u)- me^{Tv}\|\le \e e^{Tv}$ to mean 
that $ | \sigma_i( \u)- m_i e^{Tv_i} |<\e e^{Tv_i}
   \text{ $\forall i$} $. 
  For each $T>1$, define $$\mathscr U_{T}(v, m;\e) =\bigcup_{(K,\sigma)\in \cal K_n}
   \bigl\{ \u\in  (O_K^\times, \sigma) :
\|\sigma(\u)- me^{Tv}\|\le \e e^{Tv}  \bigr\}.
   $$

 Let $\mathscr U_{T}^{\op{prim}}(v, m;\e)$ be the set of all $\u\in (O_K^\times, \sigma)$,
$(K, \sigma)\in \cal K_n$, such that the field
$\mathbb Q(\u)$ has degree $n$, or equivalently, $p(x) =\prod_{i=1}^n (x-\sigma_i(\u)) $ is irreducible
over $\z$ for $\sigma=(\sigma_1, \cdots, \sigma_n)$.
   \begin{lem} \label{prim} Let $0\le \e <1/2$. Then for all sufficiently large $T>1$,
   we have
      $$ \mathscr U_{T}(v, m;\e)= \mathscr U_{T}^{\op{prim}}(v, m;\e).$$
   \end{lem}

   \begin{proof}
       If $\u \in \mathscr U_{T}(v, m;\e) \cap (O_K^\times, \sigma)$ is non-primitive, 
       then there is a subfield $K_0$ of $K$ of degree $1<m<n$ such that $\u\in O_{K_0}^\times$ and each of the $m$-embeddings $K_0\hookrightarrow \br$ extends to precisely $n/m$-embeddings to $K$ into $\br$. Therefore for some $i< j$,
       $\sigma_i(\u)=\sigma_j(\u)$. This implies that     $|m_i e^{Tv_i} -m_j e^{Tv_j}| \le \e e^{Tv_i}$.
      Since  $v_i>v_j$ and hence $|m_i e^{Tv_i} -m_j e^{Tv_j}|= e^{Tv_i} (1 \pm e^{T(v_j-v_j)}) \ge  e^{Tv_i}/2 $ for all sufficiently large $T$, $\u$ cannot be non-primitive if $T$ is large enough.
   \end{proof}
   
   \begin{prop} \label{um} 
  Let $v\in \mathsf H_+$.  For all small $\e>0$, we have
   $$   \left(\tfrac{2\e}{ (n-1)3^n} \right)^{n-1}  \le \liminf_{T\to \infty}  \frac{\#  \mathscr {U}_{T}(v,m; \e)}{e^{ \tfrac{1}{2} \sum_{i<j} (v_i-v_j ) T }} \le \limsup_{T\to \infty} \frac{\#  \mathscr {U}_{T}(v,m; \e)}{e^{ \tfrac{1}{2} \sum_{i<j} (v_i-v_j ) T }} \le  (2\e)^{n-1} \, n!\,  . $$
More precisely, there exist absolute constants $c_1, c_2>0$ such that for all large $T>1$ depending on $n$ and $\e$, we have
   \begin{multline*}  \left(\tfrac{2\e}{ (n-1) 3^n} \right)^{n-1} e^{ \tfrac{1}{2} \sum_{i<j} (v_i-v_j ) T }   \left( 1 -
c_1 2^n \e^{-1} e^{-\min(\delta_v, \eta_v) T}  \right) \le    \#  \mathscr {U}_{T}(v,m; \e)
\\ \le     (2\e)^{n-1} \, n!\, 
e^{ \tfrac{1}{2} \sum_{i<j} (v_i-v_j ) T }   \left( 1+ 
c_2 2^n \e^{-1} e^{- \min(\delta_v, \eta_v)  T}  \right)  \end{multline*}
 \end{prop}
where $v\mapsto \delta_v$ and $v\mapsto \eta_v$ are positive continuous functions 
given in \eqref{dv} and \eqref{ev} respectively.
\begin{proof}

Let $\e>0$ and $T\ge T_0(v,\e)$ as in Lemma \ref{r2}. 
Let $p \in \mathscr {Q}^{\irr}_{T}(v,m; \e)$, and let $K$ be its splitting field, which must be a totally real number field of degree $n$. Let $x_1, \cdots, x_n$ be the roots of $p$ ordered so that 
 $|x_1|>\cdots>|x_n|$. Since $x_i\in O_K$ and $\prod_{i=1}^n x_i=\prod_{i=1}^n m_i=\pm 1$, there exists a unit $\u\in O_K^\times$ and $\sigma=(\sigma_1, \cdots, \sigma_n) \in \Sigma_K$ such that $K=\q (u)$ and $x_i=\sigma_i(\u)$.
Hence $$\# \mathscr {Q}^{\irr}_{T}(v,m; \e) \le \# \mathscr U_{T}^{\op{prim}} (v, m; \e) .$$
 
 Conversely, let $\u\in \mathscr U_{T}^{\op{prim}} (v, m;\e) \cap (O_K^\times, \sigma)$.
Setting $p (x)=\prod (x-\sigma_i(\u))$, we have
$p \in \mathscr {Q}^{\irr}_{T}(v,m; O(\e)) .$ 
Moreover, this map is injective for $T$ large enough. To see this, suppose
that there exist $\u \in \mathscr U_{T}(v, m;\e)^{\op{prim}}\cap  (O_K^\times, \sigma)$ and $\u'\in \mathscr U_{T}(v, m;\e)^{\op{prim}}\cap  O_{K'}^\times, \sigma')$ such that
$p(x)=q(x)$ where $p(x) =\prod (x-\sigma_i(\u)) $ and $q(x) =\prod(x-\sigma_i'(\mathtt u'))$.
Since $K$ and $K'$ must be the splitting fields of $p$ and $q$ respectively, $K=K'$
and $\{\sigma_i'(\u'): i=1, \cdots, n\} =\{\sigma_i(\u): i=1, \cdots, n\}$.
Since the intervals $(Tv_i-\e, Tv_i+\e)$ are pairwise disjoint once $T$ is sufficiently big, and
$\log \sigma_i(\u), \log \sigma'_i(\u') \in (Tv_i-\e, Tv_i+\e) $ for all $T$ sufficiently large, 
we must have $\sigma_i(\u)=\sigma_i'(\u')$ for all $1\le i\le n$.
Hence for all sufficiently large $T\gg 1$, 
$$\# \mathscr U_{T}^{\op{prim}} (v, m;\e) \le  \# \mathscr {Q}_{T}^{\irr} (v,m; \e).$$

Therefore the claim follows from Lemma \ref{prim} and  Theorem \ref{qq}.
\end{proof}

Observe that once $T$ is sufficiently large depending only on $v$ and $\e$,
the sets $\mathscr U_{T}(v, m;\e)$, $m\in \{\pm 1\}^{n}$, are pairwise disjoint.
Since
$$\{u\in O_n^\times: \|\Lambda(u)-Tv\|\le \e\} =
\bigsqcup_{m\in \{\pm 1\}^{n}} \mathscr U_{T}(v, m;\e),$$ Theorem \ref{ms} follows from  Proposition \ref{um}.

   Define
$$
\mathsf E_{n} (v, m)= \lim_{\e \to 0} \lim_{T\to \infty}
\frac{1}{T} \log \#\mathscr U_{T}(v, m;\e)  ,
$$ if the limit exists. As an immediate consequence of Proposition \ref{um}, we have:
\begin{thm}\label{iup1}
We have
    $$
    \mathsf E_{n} (v,m) =\tfrac{1}{2} \sum_{i<j} (v_i-v_j ).
    $$ \end{thm}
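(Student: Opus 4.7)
The plan is to obtain Theorem~\ref{iup1} as an immediate corollary of Proposition~\ref{um}, which already provides the sharp two-sided count. By that proposition, for every sufficiently small $\e>0$ there exist constants $C_1(\e), C_2(\e)>0$ such that
$$
  C_1(\e)\, \exp\!\Bigl(\tfrac{1}{2}\sum_{i<j}(v_i-v_j)\,T\Bigr)
  \;\le\;
  \#\mathscr U_T(v,m;\e)
  \;\le\;
  C_2(\e)\, \exp\!\Bigl(\tfrac{1}{2}\sum_{i<j}(v_i-v_j)\,T\Bigr)
$$
for all $T\ge 1$. Taking logarithms and dividing by $T$, the additive contributions $\frac{1}{T}\log C_i(\e)$ vanish as $T\to\infty$. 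Consequently, for each fixed small $\e>0$,
$$
  \lim_{T\to\infty}\frac{1}{T}\log\#\mathscr U_T(v,m;\e)
  \;=\;
  \tfrac{1}{2}\sum_{i<j}(v_i-v_j),
$$
so the inner limit in the definition of $\mathsf E_n(v,m)$ exists. Since the value is independent of $\e$, the outer limit as $\e\to 0$ exists and yields the same number. This is the claim.

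No further ingredient is needed; all of the substantive work has already been done upstream. Theorem~\ref{qq} converts the count of integral polynomials with prescribed roots into a count of lattice points in the coefficient box $\mathscr P_T(v,m;\e)$, via Rouché's theorem and Vieta's formulas, and controls the reducible contribution through Lemma~\ref{cal}. Proposition~\ref{um} then bridges polynomials and units: each irreducible $p\in\mathscr Q^{\irr}_T(v,m;\e)$ factors in its totally real splitting field $K$ as the orbit of a unit $\u\in O_K^\times$ under a prescribed ordered embedding, and conversely each unit $\u\in\mathscr U_T(v,m;\e)\cap (O_K^\times,\sigma)$ produces an element of $\mathscr Q_T(v,m;O(\e))$. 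The factor $n!$ arising from reorderings and the harmless $\e$-dependence of $C_1(\e), C_2(\e)$ are absorbed once we normalize by $1/T$ and send $T\to\infty$; this normalization is precisely what makes the directional entropy $\mathsf E_n(v,m)$ robust. Thus no genuine obstacle remains: Theorem~\ref{iup1} is a one-line consequence of Proposition~\ref{um}.
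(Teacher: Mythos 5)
Your proposal is correct and follows exactly the route the paper takes: the paper itself introduces Theorem~\ref{iup1} with the phrase ``As an immediate consequence of Proposition~\ref{um},'' and your argument is precisely the routine passage from the two-sided bound $\#\mathscr U_T(v,m;\e)\asymp_\e e^{\frac{1}{2}\sum_{i<j}(v_i-v_j)T}$ to the directional entropy by taking $\frac{1}{T}\log$, letting $T\to\infty$ (which kills the $\e$-dependent constants), and then observing that the resulting value is independent of $\e$ so the outer $\e\to 0$ limit is trivial. Nothing is missing.
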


\section{Eigenvalue entropy of $\SL_n(\z)$}
In this section, we count the eigenvalue patterns of $\SL_n(\z)$ that lie
in a thin tube around a fixed ray, invoking Theorem \ref{qq}.

Fix $v\in \mathsf H_+$ and a sign pattern $m=(m_1, \cdots,  m_n) \in \{\pm 1\}^n$ with $\prod_{i=1}^n m_i=1$.
Let 
$$\rho_{\SL_n} (v)=\frac{1}{2} \sum_{i<j}(v_i-v_j);$$
be the half-sum of all positive roots of $\SL_n(\br)$.
For each loxodromic element  $g\in\SL_n(\mathbb R)$, let $\cal E(g)$, $\lambda(g)$ and $m(g)$
be its eigenvalue pattern, Jordan projection and sign pattern as defined in \eqref{eg} and \eqref{eg2}. 
Set
$$ J_{ T}(v,m; \e):= \#\Bigl\{ (\lambda(\ga), m(\ga)) :\ga\in\SL_n(\mathbb Z),
              \|\lambda(\ga)-Tv\|_{\max} \le\e, m(\ga)=m\Bigr\}. $$

       \begin{thm} \label{jm} For all small $\e >0$, we have 
       $$     \left(\tfrac{2\e}{ (n-1)3^n} \right)^{n-1} \le \liminf_{T\to \infty}  \frac{\#   {J}_{T}(v,m; \e)}{e^{ \rho_{\SL_n}(v)  T }} \le \limsup_{T\to \infty} \frac{\#   {J}_{T}(v,m; \e)}{ e^{ \rho_{\SL_n}(v)  T }} \le (2\e )^{n-1} n!  . $$
In particular, \be\label{ee}
\mathsf E_{\SL_n(\z)}(v, m)=\rho_{\SL_n}(v).
\ee 
   \end{thm}

\begin{proof} There exists $T_1=T_1(v, \e)>0$ such that for all $T\ge T_1$ and
for each  $(\la(\ga), m(\ga))\in J_{T}(v,m; \e)$,
the polynomial $p(x)=\prod (x- m_i(\ga) e^{\la_i(\ga)} )$ belongs to  $ \mathscr {Q}_{T}(v,m; \e) $. Since this gives an injective map, 
we have
$\#  J_{ T}(v,m; \e) \le \# \mathscr {Q}_{T}(v,m; \e)$.

Let $f \in \mathscr {Q}_{T}(v,m; \e) =\sum_{i=0}^n (-1)^{n-i} a_{n-i} x^i$.   
Consider the companion matrix of $f$:
$$
C_f=\begin{pmatrix}
 0 & 0 & \cdots & 0 & (-1)^{n+1} a_{n}\\
 1 & 0 & \cdots & 0 & (-1)^n a_{n-1}\\
 0 & 1 & \ddots & \vdots & \vdots\\
 \vdots & \ddots & \ddots & 0 & -a_{2} \\
 0 & \cdots & 0 & 1 & a_1
\end{pmatrix}
$$

Since $\text{det } C_f=a_n=\prod_{i=1}^n m_i=1$,  we have $C_f\in \SL_n(\z)$. 
If $x_1, \cdots, x_n$ are distinct real roots of $f$ ordered so that $|x_1|>\cdots >|x_n|$, then
$$|x_i - m_ie^{Tv_i}|\le \e e^{Tv_i},\quad\quad 1\le i\le n .$$
Therefore $\| \la(C_f) -Tv\|\le \e$ and $m(C_f)=m$.
Hence the assignment $f\mapsto (\lambda(C_f), m(C_f))$ 
gives a  map from the set $\mathscr {Q}_{T}(v,m; \e) $ to $J_{T}(v,m; \e)$.  Since $(\lambda(C_f), m(C_f))$ describes all roots of $f$, this map is also injective.  Therefore $\#  J_{ T}(v,m; \e) \ge \# \mathscr {Q}_{T}(v,m; \e)$.
Hence the claim follows from
Theorem \ref{qq}.
\end{proof}

The lower bound stated below follows directly from Theorem \ref{jm} and the corresponding
upper bound will be proved in Theorem \ref{up} in a more general setting.
\begin{thm} \label{gen2}
For each $v\in\mathsf H_+$, each sign pattern $m\in \{\pm 1\}$ with $\prod_{i=1}^n m_i=1$, and $\e>0$,
there exist $C_1, C_2>0$ such that
$$C_1 e^{\rho_{\SL_n}(v)T}\le    \#\Bigl\{ [\ga] \in [\SL_n(\mathbb Z)] :
              \|\lambda(\ga)-Tv\|\le\e, m(\ga)=m\Bigr\} \le C_2
              e^{2 \rho_{\SL_n}(v)T} .$$
            In particular,  
\be\label{ee33}
\rho_{\SL_n}(v)\le 
    \underline{\mathsf E}^{\!\star}_{\SL_n(\mathbb Z)}(v,m)
    \;\le\;
    \overline{\mathsf E}^{\!\star}_{\SL_n(\mathbb Z)}(v,m)
   \le 2\rho_{\SL_n}(v) .
\ee 
\end{thm}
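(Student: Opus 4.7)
The lower bound is immediate from Theorem~\ref{jm}. Each eigenvalue pattern $\mathcal{E}(\gamma)$ of a loxodromic $\gamma\in\SL_n(\mathbb Z)$ uniquely determines its $\SL_n(\mathbb R)$-conjugacy class, and every such $\SL_n(\mathbb R)$-class meeting $\SL_n(\mathbb Z)$ contains at least one $\SL_n(\mathbb Z)$-class. Selecting one $\SL_n(\mathbb Z)$-class inside each such $\SL_n(\mathbb R)$-class gives an injection from eigenvalue patterns into $\SL_n(\mathbb Z)$-conjugacy classes, whence
$$\#\bigl\{[\gamma]\in[\SL_n(\mathbb Z)]:\|\lambda(\gamma)-Tv\|\le\varepsilon,\,m(\gamma)=m\bigr\}\;\ge\;\#J_T(v,m;\varepsilon)\;\asymp_\varepsilon\;e^{\rho_{\SL_n}(v)T}$$
by Theorem~\ref{jm}.

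For the upper bound, which is flagged to follow from the more general Theorem~\ref{up}, the plan is to reduce the conjugacy-class count to a lattice-point count controlled by the Cartan projection and then invoke Eskin--McMullen orbital counting. Concretely, I would (i) show that every $\SL_n(\mathbb Z)$-conjugacy class with $\lambda(\gamma)\in B_\varepsilon(Tv)$ admits a representative $\gamma_0$ whose Cartan projection $\mu(\gamma_0)$ satisfies $\|\mu(\gamma_0)-Tv\|\le\varepsilon+C$ for a constant $C=C(v)$ independent of $T$, which yields an injection
$$\bigl\{[\gamma]\in[\SL_n(\mathbb Z)]:\lambda(\gamma)\in B_\varepsilon(Tv)\bigr\}\;\hookrightarrow\;\bigl\{\gamma\in\SL_n(\mathbb Z):\mu(\gamma)\in B_{\varepsilon+C}(Tv)\bigr\};$$
and (ii) bound the right-hand cardinality by the corresponding Haar volume,
$$\#\bigl\{\gamma\in\SL_n(\mathbb Z):\mu(\gamma)\in B_{\varepsilon+C}(Tv)\bigr\}\;\ll_\varepsilon\;\operatorname{Vol}\bigl(K\exp(B_{\varepsilon+C}(Tv))K\bigr)\;\ll_\varepsilon\;e^{2\rho_{\SL_n}(v)T}.$$
The volume asymptotic uses the standard Jacobian $\prod_{\alpha>0}\sinh(\alpha(a))$ of the $KAK$ decomposition of Haar measure, which produces the $2\rho$ exponent; the passage from volume to lattice-point count is supplied by mixing of the $\SL_n(\mathbb R)$-action on $\SL_n(\mathbb Z)\backslash\SL_n(\mathbb R)$ combined with the strong wavefront lemma \cite[Theorem 3.7]{GO}, in the style of \cite{EM}.

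The main obstacle is Step (i). Within a single $\SL_n(\mathbb R)$-conjugacy class of a loxodromic element the Cartan projection is unbounded---conjugating a diagonal element by a highly unipotent element drives $\mu$ to infinity---so one must exhibit a \emph{shallow} representative in each $\SL_n(\mathbb Z)$-orbit. The natural route is to parametrize the $\SL_n(\mathbb R)$-class as $\SL_n(\mathbb R)/Z_G(\gamma)$ and apply reduction theory for the $Z_{\SL_n(\mathbb Z)}(\gamma)$-action: a bounded fundamental domain for this action yields a conjugator $h$ of controlled size, forcing $\mu(h\exp(\lambda(\gamma))h^{-1})$ to differ from $\lambda(\gamma)$ by $O(1)$. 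The reference to Theorem~\ref{up} suggests that, in the general semisimple setting, this is cleaner to implement uniformly via the strong wavefront lemma applied at the group level, bypassing an explicit choice of representatives and relating conjugacy-class counts directly to tube volumes.
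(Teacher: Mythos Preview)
Your outline is correct and matches the paper's strategy: the lower bound follows exactly as you say from Theorem~\ref{jm}, and the upper bound is obtained by passing from conjugacy classes to lattice points with Cartan projection in a tube and then applying Eskin--McMullen/strong-wavefront counting (this is Theorem~\ref{cartan} in the paper). Two clarifications are worth making.

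First, your speculation that the strong wavefront lemma itself handles Step~(i) is misplaced: that lemma is purely about the geometry of the $KAK$ decomposition and enters only in Step~(ii), to verify well-roundedness of the tubular targets $K\exp(B_{\varepsilon+C}(Tv))K$. Step~(i) requires a separate input. The paper obtains it from the Tomanov--Weiss theorem (Theorem~\ref{tw}): there is a fixed compact set $Q\subset G$ through which \emph{every} compact $AM$-orbit in $\Gamma\backslash G$ passes. Since the centralizer of a loxodromic $\gamma$ is conjugate to $AM$ and the corresponding $AM$-orbit is compact, one finds $g\in Q$ with $\gamma'=g\,e^{\lambda(\gamma)}m_\gamma\,g^{-1}\in[\gamma]$, and then $\|\mu(\gamma')-\lambda(\gamma)\|\le C$ with $C$ depending only on $Q$ (Corollary~\ref{com}). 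In particular the constant is uniform over all conjugacy classes and over $v$, not merely $C=C(v)$.

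Second, your alternative route---reduction theory for the $Z_{\SL_n(\mathbb Z)}(\gamma)$-action on $\SL_n(\mathbb R)/Z_G(\gamma)$---is in the right spirit but hides exactly the hard point: the centralizer lattice varies with $\gamma$, so one needs a \emph{uniform} bound on the size of a fundamental domain as $\gamma$ ranges over all loxodromic classes. Proving this uniformity is essentially equivalent to the Tomanov--Weiss statement, so the paper's formulation is the cleanest way to package it.
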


\begin{cor} \label{euc3} Let $\|\cdot\|$ be a norm on $\br^n$. Let $v=v_{\|\cdot\|}\in \mathsf H_+$ be a unit vector such that $\max_{\|v\|=1} \rho_{\SL_n}(v)=\rho_{\SL_n}(v_{\|\cdot\|})$.
There exists $C>0$ such that that for all $T>1$,
  $$  \#\{ [\ga]\in [\SL_n(\z)]: \|\lambda(\ga)\| <T, \gamma\in \SL_n(\z) \} \ge  C  
     e^{ \rho_{\SL_n}(v_{\|\cdot\|} )T} .  $$   
   \begin{enumerate}
       \item 
 For the Euclidean norm $\|\cdot\|_{\op{Euc}}$, 
     $\rho_{\SL_n}(v_{\|\cdot\|_{\op{Euc}}}) =\sqrt{\frac{{n(n^2-1)}}{12}}$.
\item For the maximum norm $\|\cdot\|_{\max}$,  $\rho_{\SL_n}(v_{\|\cdot\|_{\op{max}}})$
is $\lfloor n^2/4\rfloor$.
        \end{enumerate}  
\end{cor}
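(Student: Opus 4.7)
My plan is to derive the lower bound from Theorem~\ref{gen2} applied along the single ray $\br_+ v_{\|\cdot\|}$, and then to identify $\rho_{\SL_n}(v_{\|\cdot\|})$ for each norm by elementary linear optimization. Fix a unit vector $v\in\mathsf H_+$ and any $\e\in(0,1)$, and apply Theorem~\ref{gen2} at scale $t := T-\e$. This produces at least $C_1\,e^{\rho_{\SL_n}(v)(T-\e)}$ distinct conjugacy classes $[\ga]\in[\SL_n(\z)]$ satisfying $\|\lambda(\ga)-tv\|\le\e$; the triangle inequality then gives $\|\lambda(\ga)\|\le t\|v\|+\e=T$. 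Taking $v=v_{\|\cdot\|}$ and absorbing the factor $e^{-\e\rho_{\SL_n}(v)}$ into the constant gives the asserted lower bound.

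For the Euclidean case, write $\rho_{\SL_n}(v)=\langle c,v\rangle$ with $c=(n-1,n-2,\ldots,1,0)$. The maximum of this linear functional on the intersection of $\mathsf H=\{\sum v_i=0\}$ with the Euclidean unit sphere equals the norm of the orthogonal projection $P_{\mathsf H}c$ and is attained at its normalization. A direct calculation gives $(P_{\mathsf H}c)_i=(n+1)/2-i$, which is strictly decreasing in $i$ and therefore lies in $\mathsf H_+$; moreover $\|P_{\mathsf H}c\|^2=\sum_{i=1}^n(i-(n+1)/2)^2=n(n^2-1)/12$.

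For the max norm, the feasible set $P=\{v\in\mathsf H:|v_i|\le 1\}$ is a convex polytope on which the linear functional $\rho_{\SL_n}$ attains its maximum at a vertex, and by the Hardy--Littlewood--P\'olya rearrangement inequality the supremum over the chamber coincides with that over $P$. Lagrange duality, with multiplier equal to the mean $(n-1)/2$ of the coefficients $n-i$, picks out $v^\star_i=\op{sgn}((n+1)/2-i)$, with $v^\star_{(n+1)/2}=0$ when $n$ is odd; direct arithmetic then yields $\rho_{\SL_n}(v^\star)=\lfloor n^2/4\rfloor$. The main technical wrinkle is that for even $n\ge 4$ the optimizer $v^\star$ has $v^\star_1=v^\star_2=1$ and so lies on $\partial\mathsf H_+$ rather than in $\mathsf H_+$; this is handled by applying Step~1 to an arbitrarily small perturbation $v\in\mathsf H_+$ with $\rho_{\SL_n}(v)$ arbitrarily close to $\lfloor n^2/4\rfloor$, the resulting loss in the exponent being swallowed into the constant $C$ (and into the $\e$ in the Step~1 argument).
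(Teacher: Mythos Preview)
Your argument follows the paper's approach: apply the tube estimate along the ray through $v_{\|\cdot\|}$ at scale $T-\e$ (you invoke Theorem~\ref{gen2}, the paper invokes Theorem~\ref{jm}; these are equivalent for a lower bound since distinct Jordan projections force distinct conjugacy classes), then identify $v_{\|\cdot\|}$ by elementary linear optimization. Your Euclidean computation is the paper's, just phrased via orthogonal projection onto $\mathsf H$.

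The one genuine gap is in your handling of the max norm. You correctly observe that the optimizer $v^\star$ lies on $\partial\mathsf H_+$ (in fact this occurs for all $n\ge 4$, odd as well as even), so Theorem~\ref{gen2} does not apply at $v^\star$. But your proposed fix---perturb to $v\in\mathsf H_+$ with $\rho_{\SL_n}(v)=\lfloor n^2/4\rfloor-\delta$ and ``swallow the loss into the constant''---does not work: it yields only
\[
\ge C_\delta\,e^{(\lfloor n^2/4\rfloor-\delta)T},
\]
and the deficit $e^{-\delta T}$ is exponential in $T$, hence cannot be absorbed into any $T$-independent constant. The paper's own proof simply applies Theorem~\ref{jm} at the boundary vector without comment and so shares this gap; a complete argument would require either extending the tube estimate to the relevant wall of $\mathsf H_+$, or letting $\delta$ depend on $T$ while tracking uniformity of the implicit constants in Lemma~\ref{r2} and Theorem~\ref{jm}.
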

\begin{proof}
    Let $N(T):= \#\{\lambda(\ga): \ga\in \SL_n(\z), \|\lambda(\ga)\| <T\}$.
Since 
$$
  N(T)
\ge  \# J_{m, T-\e}(v_{\|\cdot\|}, \e) $$ for any sign pattern $m$ with $\prod m_j=1$,
the desired lower bound for $N(T)$ follows from Theorem \ref{jm}.

Since $2\rho_{\SL_n}(v)= \sum_{k=1}^{n} (n+1-2k)v_k,$ its maximum on the unit sphere
(for the Euclidean norm) is attained in the direction of
$(n+1-2k)_{k=1}^n$. Hence if we write $v_{\|\cdot\|_{\op{Euc}}}=(v_1^*, \cdots, v_n^*)$,
then \be\label{vs} v_k^*=\frac{n+1-2k}{\sqrt{n(n^2-1)/3}},\quad\text{and}\quad \rho_{\SL_n} (v_{\|\cdot\|_{\op{Euc}}} )= \sqrt{\frac{{n(n^2-1)}}{12}}.\ee 
For the maximum norm, $v_{\|\cdot\|_{\max}}$ is given by
$v_k=1$ whenever $n+1-2k >0$ and $v_k=-1$ whenever $n+1-2k <0$ and $v_k=0$ if $2k=n+1$.
Then for $n=2m$, $2\rho_{\SL_n} (\|v\|_{\max})= \sum_{k=1}^m (2m+1-2k)+\sum_{k=m+1}^{2m}
(2m+1-2k) (-1) = 2m^2=n^2/2$, and for $n= 2m+1$,  $2\rho_{\SL_n} (\|v\|_{\max})=m(m+1)=(n^2-1)/2$.
\end{proof}


\section{Reciprocal polynomials and Counting for $\Sp_{2n}(\z)$}\label{sp}
In this section, we investigate directional entropies for the symplectic lattice $\Sp_{2n}(\z)$.
Our estimates rely on the anaylsis of reciprocal polynomials.

\subsection*{Reciprocal polynomials} A monic polynomial $p\in \br[x]$ of degree $2n$ is called {\it reciprocal} (also called {\it palindromic}) if $$p(x)=x^{2n}p(x^{-1}). $$

Equivalently,
$$       p(x)=\sum_{k=0}^{2n}(-1)^{2n-k}a_{2n-k}\,x^{k},
        \qquad
        a_0=a_{2n}=1,\quad
        a_i=a_{2n-i}\;\;(1\le i\le n);
        $$ or
    $$ p(x)=\prod_{i=1}^{n}\bigl(x-x_i\bigr)\bigl(x-x_i^{-1}\bigr),
        \qquad
        x_1,\dots,x_n\in\mathbb{C} -\{0\}.
$$

Let $$\fa^+=\{v=\text{diag}(v_1, \cdots, v_n, -v_n, \cdots, -v_1): v_1\ge \cdots \ge v_n\ge 0\}.$$
Fix $$v\in \inte\fa^+\quad \text{and}\quad m=(m_1, \cdots , m_n)\in \{\pm 1\}^{n}.$$

\begin{Def} Let $\e>0$ and $T>1$.
Let $\mathscr Q^*_T(v,m;\e)$ (resp. $\mathscr Q^{*,\irr}_T(v,m;\e)$) be the set of all monic integral (resp. irreducible) reciprocal polynomials with roots
$x_1, \cdots, x_n$, $x_1^{-1}, \cdots, x_n^{-1}$ such that for all  $i=1,\cdots, n,$
   $$|x_i-m_ie^{Tv_i}|\le \e e^{Tv_i}, \;\; |x_i^{-1} -m_ie^{-Tv_i}|\le  \e e^{-Tv_i} .$$
\end{Def}
Set  $$\rho^*(v)=\sum_{i=1}^n (v_1+\cdots +v_i)=\sum_{i=1}^n (n+1-i)v_i .$$
The following theorem is a direct combination of Lemma \ref{cr1}, Corollary \ref{r22} and Lemma \ref{r27}:
\begin{thm} \label{qstar} Let $\e>0$.  As $T\to \infty$,
    $$ \# \mathscr Q^*_T(v,m;\e)  \asymp_\e  e^{\rho^*(v)T }  ; $$
    more precisely,
$$   \left(\tfrac{2\e }{(2n-1) 3^{2n}}\right)^n \le \liminf_{T\to \infty}  \frac{\#  \mathscr {Q}_{T}^*(v,m; \e)}{e^{ \rho^*(v)T} } \le \limsup_{T\to \infty} \frac{\#  \mathscr {Q}_{T}^*(v,m; \e)}{e^{ \rho^*(v)T}}  \le(2\e)^n ( n+1)!. $$
Moreover,
      $$ \# \mathscr Q^{*,\irr} _T(v,m;\e)  =  \# \mathscr Q^*_T(v,m;\e) (1+O(e^{-\eta T}))  $$ for some $\eta>0$ depending only on $v$.
\end{thm}

 For $T>1$,
define the model polynomial
$$q_{Tv}(x)=\prod_{i=1}^n (x- m_i e^{Tv_i}) (x-m_i e^{-Tv_i}) =\sum_{k=0}^{2n} (-1)^{2n-k} b_{2n-k} x^{k}.$$ 
Then for any $\e>0$,
and sufficiently large $T\gg 1$, we have that $b_0=1=b_{2n}$,  $b_i=b_{2n-i} $ and
$$ (1-\e) e^{ T(v_1+\cdots +v_i)}   \le b_i  M_i \le  (1+\e) e^{T(v_1+\cdots +v_i)}  \;\;\quad   \text{for all $1\le i\le n$} $$
where $M_i=\prod_{j=1}^i m_i$.

Let $\mathscr P^*_T(v,m;\e)$ be the set of all monic reciprocal polynomials
$$p(x) = \sum_{k=0}^{2n} (-1)^{2n-k} a_{2n-k} x^{k} \in \z[x]$$
such that 
$$ (1-\e) e^{ T(v_1+\cdots +v_i)}   \le a_i  M_i \le  (1+\e) e^{T(v_1+\cdots +v_i)}  \;\;\quad   \text{for all $1\le i\le n$.} $$

Let $\mathscr P^{**}_T(v,m;\e)$ be defined by the condition that 
$$ (1-(i+1)\e) e^{ T(v_1+\cdots +v_i)}   \le a_i  M_i \le  (1+ (i+1) \e) e^{T(v_1+\cdots +v_i)}  \;\;\quad   \text{for all $1\le i\le n$.} $$

Clearly we have:
\begin{lem} \label{cr1} 
    For all sufficiently small $\e>0$, we have, as $T\to \infty$,
$$ \# \mathscr P^*_T(v,m;\e)  \sim (2\e)^n  e^{\rho^*(v)T } \;\; \text{ and } \;\; \# \mathscr P^*_T(v,m;\e)  \sim (2\e)^n (n+1)!  e^{\rho^*(v)T }  .$$
\end{lem}

The following follows from Lemma \ref{r2}:
\begin{cor}[Root approximation] \label{r22} 
For all sufficiently small $\e>0 $, there exists 
$T_0= T_0(v, \e)>1$ such that for all $T\ge T_0$, we have
  $$ \mathscr P^*_T(v,m;\frac{\e}{c_{2n}})\subset \mathscr Q^*_T(v,m;\e)\subset \mathscr P^{**}_T(v,m;  \e) $$
  where $c_{2n}=(2n-1)3^{2n} $.
  \end{cor}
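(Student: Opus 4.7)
The plan is to reduce both inclusions to Lemma \ref{r2} by embedding a monic reciprocal polynomial of degree $2n$ into the framework of general degree-$2n$ polynomials. Set
$$
\tilde v \defeq (v_1, \ldots, v_n, -v_n, \ldots, -v_1) \in \br^{2n}, \quad \tilde m \defeq (m_1, \ldots, m_n, m_n, \ldots, m_1) \in \{\pm 1\}^{2n}.
$$
Since $v \in \inte \fa^+$, we have $v_1 > \cdots > v_n > 0$, so $\tilde v$ has strictly decreasing coordinates summing to $0$; that is, $\tilde v$ lies in the open positive Weyl chamber $\mathsf{H}_+$ inside $\br^{2n}$.

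First I would verify that the reciprocal relation $a_j = a_{2n-j}$ automatically promotes a polynomial in $\mathscr{P}^*_T(v,m;\e)$ to one in $\mathscr{P}_T(\tilde v, \tilde m; \e)$. For an index $j = n+k$ with $1 \le k \le n$, the two bookkeeping identities
$$
\tilde v_1 + \cdots + \tilde v_{n+k} = v_1 + \cdots + v_{n-k}, \qquad \prod_{\ell=1}^{n+k} \tilde m_\ell = \prod_{\ell=1}^{n-k} m_\ell,
$$
together with $a_{n+k} = a_{n-k}$, reduce the required constraint at position $n+k$ to the original $\mathscr{P}^*_T$-constraint at position $n-k$. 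Thus $\mathscr{P}^*_T(v,m;\e) \subset \mathscr{P}_T(\tilde v, \tilde m; \e)$.

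Next, for $T \ge T_0$, Lemma \ref{r2} applied to $\mathscr{P}_T(\tilde v, \tilde m; \e)$ guarantees that each such polynomial has $2n$ distinct real roots $y_1, \ldots, y_{2n}$ satisfying $|y_j - \tilde m_j e^{T \tilde v_j}| \le c_{2n} \e e^{T \tilde v_j}$. The remaining key point is root pairing: since $p$ is reciprocal, $y \mapsto y^{-1}$ permutes the roots. For $i \le n$, we have $y_i^{-1} \approx m_i e^{-T v_i}$, and for $T$ sufficiently large the Rouch\'e discs around $m_j e^{\pm T v_j}$ are pairwise disjoint (this uses $v_n > 0$), so uniqueness of the root in each disc forces $y_i^{-1} = y_{2n+1-i}$. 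Setting $x_i \defeq y_i$ then produces the pairing $(x_i, x_i^{-1})$ with the approximation required by the definition of $\mathscr{Q}^*_T(v, m; O(\e))$.

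For the converse inclusion, a polynomial $p \in \mathscr{Q}^*_T(v, m; \e)$ has roots $x_i, x_i^{-1}$ whose prescribed approximations, read off in the order matching $\tilde v, \tilde m$, are exactly the hypothesis of the converse statement in Lemma \ref{r2} (a direct Vieta estimate). That converse yields $p \in \mathscr{P}_T(\tilde v, \tilde m; O(\e))$; the reciprocal identity $a_j = a_{2n-j}$ makes the constraints at $j > n$ redundant, and $p \in \mathscr{P}^*_T(v, m; O(\e))$ follows. The only substantive obstacle in the entire argument is the disc-disjointness step, which requires $v_n > 0$ together with $T \gg 1$ so that the ``large'' and ``small'' root clusters stay separated; both assumptions are built into the hypothesis $v \in \inte \fa^+$ and the threshold $T_0$.
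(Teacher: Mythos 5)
Your argument is correct and unpacks exactly what the paper leaves implicit when it says ``the following follows from Lemma~\ref{r2}'': embed the reciprocal problem into $\mathsf H_+ \subset \br^{2n}$ via $\tilde v=(v_1,\dots,v_n,-v_n,\dots,-v_1)$ and $\tilde m=(m_1,\dots,m_n,m_n,\dots,m_1)$, check that the palindromic identity $a_j=a_{2n-j}$ makes the box constraints at index $n+k$ coincide with those at $n-k$, apply Lemma~\ref{r2} to locate one real root per Rouch\'e disc, and use disc disjointness together with the involution $y\mapsto y^{-1}$ on the roots to identify the correct $\{x_i,x_i^{-1}\}$ pairing. This is the same approach as the paper's; you have simply supplied the routine but necessary details, including the one genuinely delicate point (that $v_n>0$ keeps the large and small root clusters separated so the pairing is forced).
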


\begin{lem} \label{r27}  For all sufficiently small
$\e>0$, there is $\eta>0$ depending only on $v$ such that for all $T$ large enough, we  have
$$
\frac{\# \{p\in \mathscr P^*_T(v,m;\e) \text{ irreducible}\} }{\# \mathscr P^*_T(v,m;\e)}=1 +O(e^{-\eta T}).$$
\end{lem}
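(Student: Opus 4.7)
The plan is to adapt the proof of Lemma \ref{r7} to the reciprocal setting, with an additional case dictated by the palindromic structure. Fix $p\in\mathscr P^*_T(v,m;\e)$ and suppose $p$ is reducible over $\mathbb Z$. Write $p=f_1 f_2$ with each $f_j\in\mathbb Z[x]$ monic and of positive degree. Corollary \ref{r22} together with the positivity of the $v_i$ ensures, for $T$ large, that the $2n$ roots of $p$ are distinct and bounded away from $\pm 1$, so no anti-palindromic factor can occur. Reciprocity of $p$ then forces $\{f_1,f_2\}=\{f_1^*,f_2^*\}$, producing two scenarios: Case (A), both $f_j$ are self-reciprocal; and Case (B), $f_2=f_1^*$ with $f_1$ not self-reciprocal.

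In Case (A) each $f_j$ is palindromic of even degree $2k_j$, and its roots form a union of $k_j$ reciprocal pairs drawn from those of $p$. This gives a nontrivial partition $\{1,\dots,n\}=I_1\sqcup I_2$ with $f_j\in \mathscr P^*_T(v|_{I_j},m|_{I_j};O(\e))$, so by Lemma \ref{cr1} the Case~(A) contribution is at most $\sum_{I_1} e^{(\rho^*(v|_{I_1})+\rho^*(v|_{I_2}))T}$. I would then establish the strict inequality
$$\rho^*(v) - \rho^*(v|_{I_1}) - \rho^*(v|_{I_2}) = \sum_{\substack{i<j\\ i,j\text{ in different blocks}}} v_i > 0$$
by a direct manipulation starting from $\rho^*(v|_I)=\sum_{i\in I}|I\cap[i,n]|\,v_i$; the positivity of every $v_i$ replaces the zero-sum condition used in Lemma \ref{cal}.

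Case (B) is the new feature absent from the $\SL_n$ argument. Here $\deg f_1=n$, $f_1$ is monic in $\mathbb Z[x]$ with $f_1(0)=\pm 1$, and its roots lie close to $\{m_i e^{\sigma_i T v_i}\}_{i=1}^n$ for some sign vector $\sigma\in\{\pm 1\}^n$. The requirement $|f_1(0)|=1$ forces $\sum_i\sigma_i v_i=0$ for large $T$, which already rules out $\sigma=\pm(1,\dots,1)$ since all $v_i>0$. For each remaining $\sigma$, the $k$-th coefficient of $f_1$ must lie in an interval of length $\ll e^{TM_k(\sigma)}$ with $M_k(\sigma)=\max_{|S|=k}\sum_{i\in S}\sigma_i v_i$, giving
$$\#\{f_1\}\;\ll\; e^{T\sum_{k=1}^n M_k(\sigma)}\;=\;e^{T\sum_{i=1}^n(n-i+1)w_{(i)}},$$
where $w_{(1)}\ge\dots\ge w_{(n)}$ is the decreasing rearrangement of $(\sigma_i v_i)$. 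A short check shows $w_{(i)}\le v_i$ for all $i$, with $w_{(n)}<0< v_n$ whenever $\sigma\ne\pm(1,\dots,1)$, so $\sum_i(n-i+1)w_{(i)}<\rho^*(v)$ with a uniform positive gap over the finite set of admissible $\sigma$.

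Combining the two cases yields a bound of the form $\ll e^{(\rho^*(v)-\eta)T}$ on reducible polynomials, which by Lemma \ref{cr1} is an $O(e^{-\eta T})$ fraction of $\mathscr P^*_T(v,m;\e)$. The main obstacle I anticipate is Case (B): the very existence of a nontrivial $\sigma$ with $\sum\sigma_iv_i=0$ depends on arithmetic relations among the $v_i$, and one must extract the exponential gap uniformly in $T$. The crucial structural input driving the gap is that every coordinate $v_i$ is strictly positive (not merely summing to zero as in the $\SL_n$ setting), so the majorization $w_{(i)}\le v_i$ becomes strict at the bottom whenever any $\sigma_i=-1$.
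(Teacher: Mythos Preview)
Your proposal is correct and follows the same two-case architecture as the paper's proof (your Cases~(A) and~(B) are the paper's second and first cases, respectively), but the internal arguments differ in instructive ways. In Case~(A) you give an explicit closed formula $\rho^*(v)-\rho^*(v|_{I_1})-\rho^*(v|_{I_2})=\sum_{i<j,\ \text{cross}}v_i$, whose positivity is immediate from $v_i>0$; the paper instead argues qualitatively that ``the largest coefficient weight drops from~$n$ to at most~$n-1$''. In Case~(B) the paper parametrizes by the set $P=\{i:f(x_i)=0\}$ of ``large'' roots of $f$ with $|P|=\ell<n$, bounds the coefficient box by $\exp\bigl(T\sum_{k\le\ell}(v_{j_1}+\cdots+v_{j_k})\bigr)$, and concludes this is at most $e^{(\rho^*(v)-v_n)T}$. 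Your rearrangement argument via the sign vector $\sigma$ and the majorization $w_{(i)}\le v_i$ is cleaner: it treats all $n-1$ free coefficients uniformly (the paper is terse about the coefficients with index $k>\ell$) and makes the gap manifest as $(n{-}n{+}1)(v_n-w_{(n)})\ge v_n$.

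One small logical slip: the assertion ``reciprocity of $p$ forces $\{f_1,f_2\}=\{f_1^*,f_2^*\}$'' fails for an arbitrary two-factor splitting (e.g.\ $p=g\,g^*h$ with $h$ reciprocal and $f_1=g$, $f_2=g^*h$). What is true---and what the paper uses---is that \emph{some} such factorization exists: take an irreducible factor $g$ of $p$; if $g=g^*$ then $f_1=g$, $f_2=p/g$ are both reciprocal (Case~(A)); otherwise $g g^*\mid p$, and either $g g^*=p$ (Case~(B)) or $g g^*$ is a proper reciprocal factor (Case~(A) again). With this adjustment your argument goes through.
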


\begin{proof}
Suppose that
$p\in\mathscr P^{*}_{T}(v,m;\varepsilon)$ is \emph{reducible}.
Because $p$ is \emph{reciprocal}, every irreducible factor $f$ of $p$
forces its reciprocal
$f^{*}(x)=x^{\deg f}\,f(x^{-1})$
to be a factor as well.

Consider first those $p$ that factor as $p=f\cdot f^{*}$ with
$f$ irreducible of degree $n$.  Write the roots of $p$ as
$x_i=m_ie^{T v_i}\bigl(1+O(\e)\bigr)$ and $
x_i^{-1}=m_ie^{-T v_i}\bigl(1+O(\e)\bigr)$, $ 1\le i\le n$.
Since the constant term of $f$ must be $\pm 1$, if we set
$P=\{i: f(x_i)=0\}=\{{j_1}< \cdots <{j_\ell}\}$,  then  $1\le \ell <n$.
By Vieta's formula,
the $k$-th coefficient of $f$ is bounded by 
$\exp \left( (v_{j_1} +\cdots + v_{j_k})T\right) $, up to a multiplicative constant. Taking the product over $k=1, \cdots, \ell$,
the  coefficient box for $f$ has volume at most
a constant multiple of
$\exp \sum_{k=1}^{\ell } (v_{j_1} +\cdots + v_{j_k}) $.
Because $\ell<n$ and $v_1>\cdots >v_n>0$,  we have
 $\sum_{k=1}^{\ell } (v_{j_1} +\cdots + v_{j_k}) \le
\sum_{k=1}^n (v_1+\cdots + v_k) -v_n=\rho^*(v)-v_n$.
Hence
\be\label{aa}
\#\{p\text{ with the factorization }p=f\,f^{*}\}
\ll e^{(\rho^{*}(v)-v_n)T}.
\ee

All remaining reducible polynomials split as
$$
p(x)=f_{1}(x)\,f_{2}(x),
\qquad
\deg f_{j}=2s_{j},\;
1\le s_{j}\le n-1,\;
s_{1}+s_{2}=n,
$$
with each $f_{j}$ itself a monic \emph{reciprocal} polynomial in
$\mathbb Z[x]$.

Let
$S\subset\{1,\dots ,n\}$ record which conjugate‐pairs
$\{x_{i},x_{i}^{-1}\}$ of roots of $f_1$ in decreasing modulus.
Writing $S=\{i_1>\dots>i_{s_1}\}$, we obtain
$$
f_{1}\in\mathscr P^{*}_{T}\!\bigl(u_{1},M_{1}';\varepsilon\bigr),
\quad
f_{2}\in\mathscr P^{*}_{T}\!\bigl(u_{2},M_{2}';\varepsilon\bigr),
$$
where $u_{j}$ collects the $v$-coordinates indexed by $S$ and its
complement, and $M_{j}'$ the corresponding sign patterns.

By Lemma~\ref{cr1}, we get
\begin{equation}\label{eq:NST}
\#\Bigl\{p\in\mathscr P^{*}_{T}(v,m;\varepsilon)\text{ encoded by }S\Bigr\}
\;\ll\;
e^{\bigl(\rho^{*}(u_{1})+\rho^{*}(u_{2})\bigr)T}.
\end{equation}

We claim that
$$
\Delta(S):=\rho^{*}(v)-\bigl(\rho^{*}(u_{1})+\rho^{*}(u_{2})\bigr) >0.
$$

Write $w=(n,n-1,\dots ,1)$ so that $\rho^{*}(v)=w\!\cdot\! v$.
Inside the factors $f_j$ the largest coefficient weight drops from~$n$ to
at most $n-1$, while no weight increases.  Because
$v_{1}>\dots>v_{n}$, we get
$w\!\cdot\! v > w'\!\cdot\! v$, where $w'$ is the modified
weight-vector attached to $(u_{1},u_{2})$, implying the claim.

It now follows that
$$
\#\bigl\{p\in\mathscr P^{*}_{T}(v,m;\varepsilon)
          \text{ reducible}\bigr\}
\;\ll\;
e^{\left( \rho^{*}(v)-\eta \right)T}
$$
where
$
\eta:=\min_{S}\Delta(S)>0
$.
Combined with Lemma \ref{cr1}, this completes the proof.
\end{proof}

\subsection*{Jordan projections of $\Sp_{2n}(\z)$} Let \be\label{sspp} G=\Sp_{2n}(\br)=\{g\in \SL_{2n}(\br): g^t J_n g=J_n\}\quad  J_n=\begin{psmallmatrix}0&\bar I_n\\-\bar I_n&0\end{psmallmatrix}\ee 
where $\bar I_n$ is the anti-diagonal identity matrix.

Then $\fa^+$ is a positive Weyl chamber.
For a loxodromic element $g\in G$, its Jordan projection is given by
$$\lambda(g)=(\lambda_1(g),\cdots, \lambda_n(g), -\lambda_n(g), \cdots, -\lambda_1(g)) \in \inte\fa^+$$ and its eigenvalue datum is
$$\cal E(g)= (m_1(g)e^{\lambda_1(g)},\cdots, m_n(g)e^{\lambda_n(g)}, m_n(g)e^{-\lambda_n(g)}, \cdots, m_1(g)e^{-\lambda_1(g)})$$
where $m_i(g)\in \{\pm 1\}$, $1\le i\le n$.

\begin{thm} (\cite{Ya2}, \cite{MS}, \cite{Ac})\label{every}
    Every integral monic reciprocal polynomial is the characteristic polynomial of some element of $\Sp_{2n}(\z)$.  
\end{thm}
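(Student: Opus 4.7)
The plan is to construct $g\in\Sp_{2n}(\z)$ as a block-diagonal symplectic sum of pieces built from the irreducible factorization of $p$ over $\z[x]$. Factor $p(x)=\prod_i p_i(x)^{e_i}$ into distinct monic irreducibles. Because $p$ is reciprocal with $p(0)=1$, each $p_i$ has constant term $\pm 1$, and the multiset of irreducible factors is stable under $f\mapsto f^*$, where $f^*(x)=x^{\deg f}f(1/x)$. A short argument using $\prod_{\alpha\text{ root of }p}\alpha=1$ shows that the multiplicities $e_\pm$ of the linear factors $x\mp 1$ are both \emph{even}. This groups the irreducible factors of $p$ into three types: (I) the linear self-reciprocal blocks $(x\mp 1)^{e_\pm}$; (II) pairs $(p_i,p_i^*)$ of distinct non-self-reciprocal irreducibles occurring with common multiplicity $e$; and (III) self-reciprocal irreducibles $p_i$ of even degree $\ge 2$.

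For type (I), take $I_{e_+}\oplus(-I_{e_-})$, which lies in $\Sp_{e_++e_-}(\z)$ since $e_\pm$ are even. For type (II), let $C=C_{p_i^e}\in\GL_{e\deg p_i}(\z)$ denote the companion matrix of $p_i^e$ (integer-invertible since $p_i(0)=\pm 1$). The block
\[
g_{(\mathrm{II})}:=\begin{pmatrix} C & 0 \\ 0 & (C^{-1})^T \end{pmatrix}
\]
satisfies $g_{(\mathrm{II})}^T J_0\, g_{(\mathrm{II})}=J_0$ by a direct computation, where $J_0$ is the standard symplectic form, so $g_{(\mathrm{II})}\in\Sp_{2e\deg p_i}(\z)$ with characteristic polynomial $p_i(x)^e p_i^*(x)^e$.

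For type (III), fix a self-reciprocal irreducible $p_i$ of degree $2m$ with multiplicity $e_i$, and let $R=\z[x]/(p_i^{e_i})$, a free $\z$-module of rank $2me_i$. Multiplication by $\alpha:=x\bmod p_i^{e_i}$ is a $\z$-automorphism $g_0$ with characteristic polynomial $p_i^{e_i}$; reciprocity of $p_i$ lifts to a $\z$-algebra involution $\sigma:R\to R$ with $\sigma(\alpha)=\alpha^{-1}$. To equip $R$ with a $g_0$-invariant unimodular antisymmetric form, one selects an element $\beta$ in the $\sigma$-anti-invariant part of the inverse different $R^\vee=\{x\in R\otimes\q:\Tr_{R/\z}(xR)\subset\z\}$ that generates $R^\vee$ as an $R$-module, and sets
\[
B(u,v):=\Tr_{R/\z}\bigl(\beta\cdot u\cdot\sigma(v)\bigr).
\]
Then $B$ is antisymmetric (using $\Tr\circ\sigma=\Tr$ and $\sigma(\beta)=-\beta$), $g_0$-invariant (using $\alpha\sigma(\alpha)=1$), integer-valued (using $\beta\in R^\vee$), and unimodular on $R$ (using that $\beta$ generates $R^\vee$). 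The classification of unimodular antisymmetric $\z$-forms then furnishes a $\GL_{2me_i}(\z)$-change of basis identifying $B$ with $J_0$, turning $g_0$ into an element of $\Sp_{2me_i}(\z)$ with characteristic polynomial $p_i^{e_i}$.

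Block-diagonal assembly of the pieces constructed in all three types then produces $g\in\Sp_{2n}(\z)$ with $\det(xI-g)=p(x)$. The main obstacle is the existence of the anti-invariant generator $\beta\in R^\vee$ in the type (III) construction, particularly when $e_i>1$ and $R$ is not a Dedekind domain. Since $R=\z[x]/(p_i^{e_i})$ is Gorenstein (a hypersurface in $\z[x]$), $R^\vee$ is a rank-one $R$-module on which $\sigma$ acts by multiplication by a unit of $R$ of order dividing~$2$; careful analysis of this action, combined with the non-vanishing of $p_i(\pm 1)$ (guaranteed by the prior extraction of the linear factors in type (I)), produces the required $\beta$ and completes the argument.
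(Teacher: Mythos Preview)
The paper does not prove this theorem; it simply cites \cite{Ya2}, \cite{MS}, \cite{Ac}. So there is no in-paper proof to compare against, and your proposal should be judged on its own merits.

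Your block-decomposition strategy is sound, and types (I) and (II) are fine (modulo the harmless sloppiness that $f^*$ as you define it need not be monic; the monic irreducible factor paired with $p_i$ is $p_i^*/p_i(0)$). There is, however, a genuine gap in your type (III) construction when $e_i>1$. For $R=\z[x]/(p_i^{e_i})$ with $e_i\ge 2$, the ring $R\otimes\q$ has nonzero nilradical, and multiplication by any nilpotent element has trace zero. Consequently the bilinear form $(u,v)\mapsto\Tr_{R/\z}(uv)$ is \emph{degenerate}, and your $R^\vee=\{y\in R\otimes\q:\Tr_{R/\z}(yR)\subset\z\}$ contains the entire $\q$-span of the nilradical; it is not a lattice, let alone a rank-one $R$-module. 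Invoking Gorenstein does not repair this: Gorenstein guarantees that the dualizing module $\Hom_\z(R,\z)$ is free of rank one over $R$, but that module coincides with your trace-dual $R^\vee$ only when $R\otimes\q$ is \'etale over $\q$, i.e.\ when $e_i=1$. So the form $B(u,v)=\Tr_{R/\z}(\beta\, u\,\sigma(v))$ is degenerate for every choice of $\beta$, and no unimodular antisymmetric form arises this way.

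The fix is painless: do not lump the $e_i$ copies together. Run the type (III) construction with $e_i=1$, producing $g_i\in\Sp_{2m}(\z)$ with characteristic polynomial $p_i$, and then take the $e_i$-fold symplectic direct sum $g_i\oplus\cdots\oplus g_i$, which has characteristic polynomial $p_i^{e_i}$. For $e_i=1$ your trace construction does work, and in fact the anti-invariant generator can be written down explicitly: differentiating the identity $p_i(x)=x^{2m}p_i(1/x)$ and evaluating at $\alpha$ gives $\sigma\bigl(p_i'(\alpha)\bigr)=-\alpha^{2-2m}p_i'(\alpha)$, so Euler's formula $R^\vee=\tfrac{1}{p_i'(\alpha)}R$ combined with the unit $\alpha^{m-1}\in R^\times$ shows that $\beta=\alpha^{m-1}/p_i'(\alpha)$ satisfies $\sigma(\beta)=-\beta$ and generates $R^\vee$. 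This removes the need for any ``careful analysis'' of the $\sigma$-action and closes the argument.
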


We define  
$\mathsf E_{\Sp_{2n}(\mathbb Z)}(v,m)$ and
$\mathsf E^\star_{\Sp_{2n}(\mathbb Z)}(v,m)$ exactly as in Definition \ref{dir}, replacing $\SL_n(\z)$ by $\Sp_{2n}(\z)$ throughout.

 Observe that $$\rho^*(v)=\rho_{\Sp_{2n}}(v) =\sum_{i=1}^n (n+1-i)v_i $$
 where $\rho_{\Sp_{2n}} $ is the half-sum of all positive roots of $(\frak{sp}_{2n}(\br),\fa)$.
\begin{thm}\label{jm2}
Let $v\in\operatorname{int}\fa^{+}$ and $m=(m_{1},\dots,m_{n})\in\{\pm1\}^{n}$.
For $\e>0$, set
$$
  J^{\Sp}_{T}(v,m;\varepsilon)
  :=\Bigl\{
       (\lambda(\gamma),m(\gamma)):
       \gamma\in\Sp_{2n}(\mathbb Z),\;
       \|\lambda(\gamma)-Tv\|_{\max} \le\varepsilon,\;
       m(\gamma)=m
     \Bigr\}.
$$

For all sufficiently small $\e>0$, we have
$$   \left(\frac{2\e }{ (2n-1) 3^{2n}}\right)^n \le \liminf_{T\to \infty}  \frac{\#   {J}^{\Sp}_{T}(v,m; \e)}{e^{ \rho_{\Sp_{2n}}(v)T} } \le \limsup_{T\to \infty} \frac{\#   {J}^{\Sp}_{T}(v,m; \e)}{e^{ \rho_{\Sp_{2n}} (v)T}}  \le(2\e )^{n}  (n+1)!. $$

Consequently
$$
  \mathsf E_{\Sp_{2n}(\mathbb Z)}(v,m)
  \;=\;
  \rho_{\Sp_{2n}}(v).
$$
\end{thm}

\begin{proof}

For $(\lambda(\gamma),m(\gamma))\in J^{\Sp}_{T}(v,m;\varepsilon)$, set
$$
  p(x)
  \;=\;
  \prod_{i=1}^{n}\bigl(x-m_{i}e^{\lambda_{i}(\gamma)}\bigr)
                   \bigl(x-m_{i}e^{-\lambda_{i}(\gamma)}\bigr)
  \;\in\;
  \mathscr Q^{*}_{T}(v,m;\varepsilon).
$$
The assignment $(\lambda(\gamma),m(\gamma))\mapsto p(x)$ is injective, so
$$
  \#J^{\Sp}_{T}(v,m;\varepsilon)
  \;\le\;
  \#\mathscr Q^{*}_{T}(v,m;\varepsilon).
$$

Conversely, if $p\in\mathscr Q^{*}_{T}(v,m;\varepsilon)$, then
Theorem~\ref{every} and Corollary~\ref{r22} produce a
$\gamma\in\Sp_{2n}(\mathbb Z)$ with
$(\lambda(\gamma),m(\gamma))\in J^{\Sp}_{T}(v,m;\varepsilon)$.
The map $p\mapsto(\lambda(\gamma),m(\gamma))$ is injective, hence
$$
  \#\mathscr Q^{*}_{T}(v,m;\varepsilon)
  \;\le\;
  \#J^{\Sp}_{T}(v,m;\varepsilon).
$$

Hence the claim follows from Theorem \ref{qstar}.
\end{proof}

The lower bound below follows directly from the above theorem and the 
upper bound will be proved in Theorem \ref{up}.
\begin{thm} \label{gen3}
Let $v\in\operatorname{int}\fa^{+}$ and $m=(m_{1},\dots,m_{n})\in\{\pm1\}^{n}$.
For every $0<\varepsilon<1$,
there exist $C_1, C_2>0$ such that
$$C_1 e^{\rho_{\Sp_{2n}} (v)T}\le    \#\Bigl\{ [\ga] \in [\Sp_{2n}(\mathbb Z)] :
              \|\lambda(\ga)-Tv\|\le\e, m(\ga)=m\Bigr\} \le C_2
              e^{2 \rho_{\Sp_{2n}}(v)T} .$$
            In particular,  
\be\label{ee333}
\rho_{\Sp_{2n}}(v) \le 
    \underline{\mathsf E}^{\!\star}_{\Sp_{2n}(\mathbb Z)}(v,m)
    \;\le\;
    \overline{\mathsf E}^{\!\star}_{\Sp_{2n}(\mathbb Z)}(v,m)
   \le 2 \rho_{\Sp_{2n}}(v) .
\ee 
\end{thm}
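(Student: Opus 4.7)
The lower bound is immediate from Theorem~\ref{jm2}. For a loxodromic $\gamma\in\Sp_{2n}(\R)$, the eigenvalue pattern $(\lambda(\gamma),m(\gamma))$ is a complete invariant of the $\Sp_{2n}(\R)$-conjugacy class, and hence \emph{a fortiori} of the $\Sp_{2n}(\Z)$-conjugacy class: two elements in the same $\Sp_{2n}(\Z)$-class are in particular $\Sp_{2n}(\R)$-conjugate, and so share the same pattern. Distinct patterns therefore contribute distinct $\Sp_{2n}(\Z)$-conjugacy classes, so that
$$\#\bigl\{[\gamma]\in[\Sp_{2n}(\Z)]:\|\lambda(\gamma)-Tv\|\le\varepsilon,\,m(\gamma)=m\bigr\}\;\ge\;\#J^{\Sp}_T(v,m;\varepsilon)\;\asymp_{\varepsilon}\;e^{\rho_{\Sp_{2n}}(v)T}$$
by Theorem~\ref{jm2}, giving the constant $C_1$.

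For the upper bound I would invoke Theorem~\ref{up}, which covers arbitrary lattices in semisimple real algebraic groups and whose proof follows the Eskin--McMullen orbit-counting strategy \cite{EM}. Specialized to $(G,\Gamma)=(\Sp_{2n}(\R),\Sp_{2n}(\Z))$ the plan has three steps. (i) \emph{Jordan to Cartan}: using the strong wavefront lemma \cite[Theorem~3.7]{GO}, for each conjugacy class $[\gamma]$ satisfying $\|\lambda(\gamma)-Tv\|\le\varepsilon$ and $m(\gamma)=m$, one selects a representative $\gamma_0\in\Sp_{2n}(\Z)$ whose Cartan projection $\mu(\gamma_0)$ differs from the Jordan projection $\lambda(\gamma)$ by at most a constant $C=C(G)$ independent of $\gamma$ and $T$. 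This provides an injection from such conjugacy classes into $\{\eta\in\Sp_{2n}(\Z):\|\mu(\eta)-Tv\|\le\varepsilon+C\}$. (ii) \emph{Mixing to volume}: by the Eskin--McMullen method, which exploits the mixing of the $G$-action on $\Sp_{2n}(\Z)\backslash\Sp_{2n}(\R)$, the cardinality of the latter set is bounded above by a constant multiple of the Haar volume of $\{g\in G:\|\mu(g)-Tv\|\le\varepsilon+C\}$. (iii) \emph{Volume computation}: using the $KAK$-decomposition with Haar density proportional to $\prod_{\alpha\in\Sigma^+}\sinh\alpha(a)\,da\,dk_1\,dk_2$ on the regular set, this volume grows like $e^{2\rho_{\Sp_{2n}}(v)T}$, producing $C_2$. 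This is exactly the volume exponent indicated in the remark following Theorem~\ref{strong}.

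The main obstacle is step~(i): in a given $\Sp_{2n}(\Z)$-conjugacy class one cannot simply pick any representative, since an unwise conjugator makes the Cartan projection arbitrarily large relative to $\lambda(\gamma)$. The strong wavefront lemma furnishes the needed uniform estimate, asserting that for loxodromic $\gamma$ there always exists a conjugate $h\gamma h^{-1}$ whose Cartan projection matches $\lambda(\gamma)$ up to an additive error depending only on $G$. Once this is in hand, steps~(ii) and (iii) reduce the conjugacy-class count to a thin-tube volume estimate by the by-now-classical Eskin--McMullen argument, yielding
$$\#\bigl\{[\gamma]\in[\Sp_{2n}(\Z)]:\|\lambda(\gamma)-Tv\|\le\varepsilon,\,m(\gamma)=m\bigr\}\;\ll_{\varepsilon}\;e^{2\rho_{\Sp_{2n}}(v)T}.$$
The inequality chain~\eqref{ee333} for the directional entropies $\underline{\mathsf E}^{\!\star}_{\Sp_{2n}(\Z)}(v,m)$ and $\overline{\mathsf E}^{\!\star}_{\Sp_{2n}(\Z)}(v,m)$ is then a direct consequence of the two bounds.
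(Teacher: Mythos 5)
Your lower bound and the overall structure (reduce to the Cartan-counting Theorem~\ref{cartan} via a bounded Jordan-to-Cartan comparison, then Eskin--McMullen) are exactly the paper's plan, but you have misattributed the tool that makes step~(i) work, and the resulting argument would not close.

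The strong wavefront lemma \cite[Theorem 3.7]{GO} is a statement about the stability of the $KAK$ decomposition under small perturbations: if $g=k_1 a k_2$ with $a$ away from the walls, then $UgU\subset k_1\mathcal O_K\,a\mathcal O_A\,k_2\mathcal O_K$ for a suitable small $U$. It says nothing about selecting a representative of a $\Gamma$-conjugacy class with controlled Cartan projection; it does not see $\Gamma$ at all. Where the paper actually uses the strong wavefront lemma is in your steps (ii)--(iii): it is the mechanism for proving that the regions $Z_T=K\exp(b_{T,\varepsilon})K$ are \emph{well-rounded}, which is what the Eskin--McMullen mixing argument requires to convert a volume asymptotic into a lattice-point count (Theorem~\ref{cartan}).

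The genuine content of step~(i) is Corollary~\ref{com}, whose proof uses a completely different set of inputs. For a loxodromic $\gamma\in\Gamma$, the centralizer $Z_G(\gamma)$ is a conjugate $hAMh^{-1}$ of $AM$, and since $\Gamma$ is a lattice the orbit $\Gamma\backslash\Gamma hAM$ is compact (Prasad--Raghunathan). Theorem~\ref{tw} (via Tomanov--Weiss for arithmetic lattices, or the thick-thin decomposition in rank one, combined with Margulis arithmeticity) then says every compact $AM$-orbit arises from a point $g$ in a fixed compact set $Q\subset G$; this lets one rewrite $h=\gamma_0 g a_0 m_0$ with $g\in Q$ and set $\gamma'=g\,e^{\lambda(\gamma)}(m_0 m m_0^{-1})\,g^{-1}\in[\gamma]$. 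Finally, Benoist's comparison lemma \cite[Lemma 4.6]{Be} bounds $\|\mu(g x g^{-1})-\lambda(x)\|$ uniformly for $g$ in a compact set, which gives $\|\mu(\gamma')-\lambda(\gamma)\|\le C$. Note also that this constant depends on the lattice $\Gamma$ (through $Q$), not just on $G$ as you wrote. Without this compactness input, the wavefront lemma alone gives you no control over the conjugator inside $\Gamma$, and step~(i) fails.

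The rest of your plan (injectivity of $[\gamma]\mapsto\gamma'$, reduction to $\#\{\eta\in\Gamma:\|\mu(\eta)-Tv\|\le\varepsilon+C\}$, and the $\sinh$-density volume computation giving $e^{2\rho_{\Sp_{2n}}(v)T}$) is correct and matches Theorem~\ref{up} and Theorem~\ref{cartan}, and the derivation of~\eqref{ee333} from the two bounds is straightforward as you say.
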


In \cite{Ya}, Yang establishes a bijection between the set of
$\Sp_{2n}(\mathbb Z)$–conjugacy classes and a distinguished subset of
units of degree $2n$.
Through this correspondence, Theorem~\ref{gen3} can be viewed as a
result about the growth of that collection of algebraic units.

\section{Upper bound for $\mathsf E_\Ga^\star$ for a general lattice}
Let $G$ be a connected semisimple real algebraic group.
Fix a Cartan involution so that
$\frak g=\frak k\oplus\frak p$ is the decomposition into $\pm1$
eigenspaces.
Let $K<G$ be the maximal compact subgroup with Lie algebra $\frak k$,
and let $\frak a\subset\frak p$ be a maximal abelian subalgebra with
closed positive chamber $\frak a^{+}$. Let $\rho_G$ denote the half-sum of all positive roots of $(\frak g,\frak a)$.

Write $A=\exp\frak a$, $A^{+}=\exp\frak a^{+}$, and let
$M=Z_K(A)$.  Every $g\in G$ decomposes as a commuting product
$g=g_h g_e g_u$ of hyperbolic, elliptic and unipotent elements,
and the hyperbolic part $g_h$ is $G$-conjugate to a unique element
$\exp\lambda(g)\in A^{+}$; we call $\lambda(g)$ the
\emph{Jordan projection}.
If $\lambda(g)\in\operatorname{int}\frak a^{+}$ we say $g$ is
\emph{loxodromic}; then $g_u$ is the identity and $g_e$ is conjugate to
an element $m(g)\in M$, unique up to $M$-conjugacy. We denote by $[\Gamma]_{\op{lox}}$ the set of $\Ga$-conjugacy classes of all loxodromic elements of $\Gamma$.

\begin{definition}[Directional entropy for $\Ga$]\label{dir2}
Let $\Gamma<G$ be a lattice. Let $\|\cdot\|$ be any norm on $\fa$.
For any vector $v\in \inte \frak a^{+}$, define the directional \emph{Jordan-entropy}
functions  by
$$
  \overline{\mathsf E}_{\Gamma}(v)
  :=\|v\|\cdot  \lim_{\varepsilon\to0}\;
      \limsup_{T\to\infty}\frac{\log N_\varepsilon(T,v)}{T},\;\;\;
  \underline{\mathsf E}_{\Gamma}(v)
   :=\|v\|\cdot  \lim_{\varepsilon\to0}\;
      \liminf_{T\to\infty}\frac{\log N_\varepsilon(T,v)}{T}
$$
where  $N_\varepsilon(T,v)\:=
     \#\bigl\{\lambda(\gamma):\gamma \in\Gamma:
              \|\lambda(\gamma)-\br_+v\|\le\varepsilon, \|\lambda(\ga)\|\le T \}$.
 
Similarly,
$$  \overline{\mathsf E}^{\!\star}_{\Gamma}(v) 
    :=\|v\|\cdot  \lim_{\varepsilon\to0}\;
      \limsup_{T\to\infty}\frac{\log M_\varepsilon(T,v)}{T},\;\;\;
  \underline{\mathsf E}^{\!\star}_{\Gamma}(v)
    :=\|v\|\cdot  \lim_{\varepsilon\to0}\;
      \liminf_{T\to\infty}\frac{\log M_\varepsilon(T,v)}{T} $$
where $M_\varepsilon(T,v):=
     \#\bigl\{[\gamma]\in[\Gamma]:
              \|\lambda(\gamma)-\br_+ v\|\le\varepsilon,  \|\lambda(\ga)\|\le T \bigr\}.$
These definitions are independent of the choice of a norm.
When the lower and upper values coincide, we write
$\mathsf E_{\Gamma}(v)$ and
$\mathsf E^{\!\star}_{\Gamma}(v)$, respectively.
\end{definition}

\begin{thm}\label{up}
For all $v\in\operatorname{int}\fa^{+}$ and $\e>0$, there exists $C>0$ such that for all $T\ge 1$,
$$\#\Bigl\{\,
     [\gamma]\in[\Gamma] :
     \|\lambda(\gamma)-\br_+ v\|\le\epsilon, \|\lambda(\ga)\|\le T
  \Bigr\} \le C e^{2\rho_G(v) T} .$$

In particular, $$
\overline{\mathsf E}_{\Gamma}^{\star}(v)\le 2\rho_G(v).
$$

\end{thm}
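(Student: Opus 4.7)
The plan is to reduce the conjugacy-class count to a lattice-element count via a careful choice of representative and then invoke the Cartan-projection version (Theorem~\ref{cartan}), which follows from the standard Eskin--McMullen orbital-counting technique.

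For the representative selection, for each conjugacy class $[\ga]\in [\Ga]$ with $\lambda(\ga)$ in the Jordan tube $\|\lambda(\ga)-\br_+ v\|\le\e$, $\|\lambda(\ga)\|\le T$, I would produce a representative $\ga_\star=\delta\ga\delta^{-1}$ whose Cartan projection lies in a slightly enlarged tube
\[
  \Omega'(T):=\{H\in\fa^+:\|H-\br_+v\|\le\e',\ \|H\|\le T+C_0(T)\},
\]
with $\e'=\e+o(1)$ and $C_0(T)$ at most polynomial in $T$. Geometrically, $\ga$ preserves a unique maximal flat $F_\ga\subset X:=G/K$ and translates it by $\lambda(\ga)$, so one seeks $\delta\in\Ga$ such that $eK$ lies within bounded distance of $\delta F_\ga$. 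For cocompact $\Ga$ this is immediate from the bounded diameter of $\Ga\ba X$. For non-cocompact $\Ga$, the interior condition $\lambda(\ga)\in\inte\fa^+$ forces $Z_\Ga(\ga)$ to be a cocompact lattice in the stabilizer $AM$ of $F_\ga$, so the image of $F_\ga$ in $\Ga\ba X$ is a compact flat torus; standard reduction theory (Siegel sets) ensures that its cusp excursion is at most polynomial in $T$, yielding the desired $\delta$. This gives an injection from the conjugacy classes in question into $\{\ga_\star\in\Ga:\mu(\ga_\star)\in\Omega'(T)\}$.

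Next, by the Eskin--McMullen technique, namely mixing of the $G$-action on $\Ga\ba G$ combined with the strong wavefront lemma \cite[Theorem~3.7]{GO}, one has
\[
  \#\bigl\{\ga_\star\in\Ga:\mu(\ga_\star)\in\Omega'(T)\bigr\}\;\ll\;\vol\bigl(K\exp(\Omega'(T))K\bigr),
\]
which is essentially the statement of Theorem~\ref{cartan}. The strong wavefront lemma enters here to decompose the bi-$K$-invariant sector $K\exp(\Omega'(T))K$ into translates of small bi-$K$-invariant neighborhoods on which mixing applies directly. The volume is computed in $KA^+K$ coordinates: Haar measure on $G$ reads $dg=J(H)\,dk_1\,dH\,dk_2$ with $J(H)=\prod_{\alpha\in\Sigma^+}(\sinh\alpha(H))^{m_\alpha}\asymp e^{2\rho_G(H)}$ for $H$ deep in $\fa^+$, and integration over $\Omega'(T)$ yields $\vol(K\exp(\Omega'(T))K)\asymp e^{2\rho_G(v)T}$, the polynomial shift $C_0(T)$ being absorbed into the multiplicative constant.

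I expect the main obstacle to be the representative-selection step for non-cocompact arithmetic lattices: one must show that the closed flat carried by each loxodromic conjugacy class does not penetrate exponentially deep into the cusps of $\Ga\ba X$. For $\SL_n(\Z)$ and $\Sp_{2n}(\Z)$ this follows from Minkowski-type reduction theory on the centralizer lattice, giving polynomial control on the regulator and hence on $C_0(T)$; in full generality one needs an analogous input for an arbitrary arithmetic lattice, ensuring that the cusp excursion of a loxodromic axis is polynomial in $T$. The volume computation and the Eskin--McMullen counting are by now standard.
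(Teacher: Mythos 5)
Your overall strategy matches the paper's: choose a representative $\ga'\in[\ga]$ whose Cartan projection is close to $\lambda(\ga)$, thereby injecting conjugacy classes into a set of lattice elements counted by Theorem~\ref{cartan}. However, there is a genuine gap in your representative-selection step, and it would in fact break the estimate. You propose to tolerate a cusp excursion $C_0(T)$ that is polynomial in $T$ and claim it can be absorbed into the multiplicative constant. This is false: the Jacobian $\prod_\alpha(\sinh\alpha(H))^{m_\alpha}\asymp e^{2\rho_G(H)}$ is \emph{exponential} in $H$, so if $\|\lambda(\ga)-\mu(\ga')\|$ can be as large as $C_0(T)$, the enlarged tube $\Omega'(T)$ has radius of order $C_0(T)$ (not $\e+o(1)$ as you wrote) and endpoint at $(T+C_0(T))v$; its volume is then of order $e^{2\rho_G(v)T}\cdot e^{\,c\,C_0(T)}$. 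For a genuinely polynomial $C_0(T)$ this extra factor is super-polynomial and cannot be swallowed by a constant, so the bound $C e^{2\rho_G(v)T}$ would be lost.

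What the paper actually uses is that the excursion is \emph{uniformly bounded}, independent of $T$. This is the content of Theorem~\ref{tw}, which follows from Tomanov--Weiss \cite[Theorem~1.2]{TW}: there is a \emph{fixed} compact set $Q\subset G$ such that every compact $AM$-orbit in $\Ga\backslash G$ has the form $\Ga\backslash\Ga g A M$ with $g\in Q$. For a loxodromic $\ga$, the centralizer is $hAMh^{-1}$ with $\Ga\backslash\Ga hAM$ compact (Prasad--Raghunathan), so one may take $h\in \Ga Q AM$; conjugating by the $Q$-part gives a representative $\ga'=g e^{\lambda(\ga)}m g^{-1}$ with $g\in Q$. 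Then Benoist's \cite[Lemma~4.6]{Be} yields a \emph{uniform} constant $C$ with $\|\lambda(\ga)-\mu(\ga')\|\le C$, which is precisely what makes the enlarged Cartan tube have the same exponential order $e^{2\rho_G(v)T}$ as the original Jordan tube. Your intuition that reduction theory should control the excursion is on the right track, but you should cite the uniform compactness from Tomanov--Weiss rather than accepting merely polynomial growth.
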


\subsection*{Cartan counting and Upper bound}

Let $\mu(g)\in\fa^{+}$ denote the Cartan projection of $g\in G$, i.e.\
the unique element with $$g\in K e^{\mu(g)} K.$$
If we use the norm on $\fa$ induced from the Killing form on $\frak g$, then for all $g\in G$, we have
$\|\mu(g)\|= d(go, o)$ where $o=[K]\in G/K$ and $d$ is the Riemannian distance on the symmetric space $G/K$. Counting lattice points subject to constraints  on the Cartan projection $\mu(g)$ is considerably better understood than the analogous problem for the Jordan projection; see, for example, (\cite{DRS}, \cite{EM}, \cite{GO}, \cite{BO}, \cite{GN}, etc).
In particular,
following the method of Eskin-McMullen\cite{EM}, we can count lattice points whose Cartan projections lie in prescribed tubes or cones by
combining the mixing of the $A$–action on $\Gamma\backslash G$ with the strong
wavefront lemma stated below.

\begin{lem}[Strong wavefront lemma] \cite[Theorem 3.7]{GO} \label{go} Let $\cal C\subset \inte\fa^+$ be  closed and at positive distance from every wall of $\fa^+$. For any neighborhoods $\cal O_K\subset K$ and $\cal O_A\subset A$ of $e$,
    there exists a neighborhood $U\subset G$ of $e$ such that for any
    $g=k_1 a k_2\in K (\exp \cal C) K$, we have
    $$ U g U\subset k_1 \cal O_K \,a \cal O_A\,  k_2 O_K.$$
\end{lem}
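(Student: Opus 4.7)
I would prove the lemma by combining the Iwasawa decomposition with the uniform contraction of $\Ad(a^{\pm 1})$ on the opposite nilradicals. Let $\delta>0$ be the distance from $\mathcal{C}$ to every wall of $\fa^{+}$. For every positive root $\alpha$ and every $a\in \exp\mathcal{C}$, one then has $\alpha(\log a)\ge \delta$, so $\Ad(a^{-1})$ contracts $\LieN$ by a factor at least $e^{-\delta}$ and $\Ad(a)$ contracts $\LieN^{-}$ similarly. This uniform contraction will drive the whole argument.

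\textbf{Reduction.} First I would absorb the flanking $k_1,k_2$ using compactness of $K$. The family of inner automorphisms $u\mapsto k^{-1}uk$ ($k\in K$) is equicontinuous, so by shrinking $U$ one may arrange $k^{-1}Uk\subset V$ for every $k\in K$, where $V$ is any prescribed small neighborhood of $e$ in $G$. Writing $u_1 g u_2 = k_1 v_1\, a\, v_2 k_2$ with $v_i\in V$, and enlarging $\mathcal{O}_K$ slightly to absorb the uniform family $k_2\mathcal{O}_K k_2^{-1}$, the statement reduces to showing: \emph{for $V$ sufficiently small, every $a\in \exp\mathcal{C}$, and $v_1,v_2\in V$,}
$$
v_1 a v_2 \in \mathcal{O}_K\cdot a\mathcal{O}_A\cdot \mathcal{O}_K.
$$

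\textbf{Core step.} I would then decompose $v_1=k_1'a_1'n_1'$ via $G=KAN$ and $v_2=n_2^{-}a_2''k_2''$ via the opposite Iwasawa $G=N^{-}AK$, yielding six factors close to $e$. The identities
$$
v_1 a = k_1'\,(a_1'a)\,\bigl((a_1'a)^{-1}n_1'(a_1'a)\bigr),\qquad
a v_2 = \bigl(a\,n_2^{-}a^{-1}\bigr)(a\,a_2'')\,k_2'',
$$
together with the contraction estimates, shrink both nilpotent factors by at least $e^{-\delta}$. Concatenating,
$$
v_1 a v_2 \in K_\star\cdot (a A_\star)\cdot (N_\star N_\star^{-})\cdot K_\star,
$$
with the $\star$-neighborhoods arbitrarily small uniformly in $a\in\exp\mathcal{C}$ as $V$ shrinks. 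To dispose of the residual nilpotent ``middle'' $N_\star N_\star^{-}$, I would apply the same Iwasawa + commutation recipe to it, gaining another contraction factor $e^{-\delta}$. Iterating, the unwanted $N$- and $N^{-}$-pieces shrink geometrically and their excess is absorbed as bounded $K$- and $A$-corrections; in the limit this produces a genuine $KAK$ factorization $v_1 a v_2 = \tilde k\,\tilde a\,\tilde k'$ with $\tilde k,\tilde k'\in \mathcal{O}_K$ and $\tilde a\in a\mathcal{O}_A$.

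\textbf{Main obstacle.} The delicate point will be that $v_1 a v_2$ is \emph{not} close to $a$ in the ambient metric on $G$: conjugation by $a$ expands $\LieN^{-}$-directions by $e^{\alpha(\log a)}$, which blows up as $a\to\infty$ in $\mathcal{C}$, so standard continuity of the $KAK$ decomposition near $a$ is not uniform in $a$. The rescue is the dual pairing of $G=KAN$ with $G=N^{-}AK$: each potentially ``expanding'' direction on one side is a uniformly contracted direction on the other, converting every would-be blowup into a uniform $e^{-\delta}$ contraction. That is exactly what the positive-distance hypothesis on $\mathcal{C}$ purchases.
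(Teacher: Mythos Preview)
The paper does not prove this lemma; it is quoted verbatim from \cite[Theorem~3.7]{GO} and invoked as a black box in the proofs of Theorem~\ref{cartan} and Theorem~\ref{cc3}. There is therefore no proof in the paper to compare your proposal against.

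On the substance of your sketch: the overall strategy --- pair the $KAN$ and $N^{-}AK$ Iwasawa decompositions and exploit the uniform contraction $\alpha(\log a)\ge\delta$ on $\mathcal C$ --- is the right engine, and your identification of the ``main obstacle'' (non-uniform continuity of the $KAK$ map near large $a$) is exactly the point of the lemma. However, the iteration step is under-specified. After one pass you have an expression of the shape $K_\star\,(aA_\star)\,(N_\star N_\star^{-})\,K_\star$, and you propose to ``apply the same recipe'' to the middle $N_\star N_\star^{-}$ to gain another factor $e^{-\delta}$. But the contraction in the recipe came from conjugating by $a$; to iterate you must explain how the $a$-factor is re-threaded through the new nilpotent residue at each stage (e.g.\ by writing the residue in $N^{-}AK$, pushing the new $N^{-}$ left across the \emph{existing} $a$-factor, then re-decomposing the resulting small $K\cdot N^{-}$ product, and so on). This alternation between the two Iwasawa forms does give a geometric contraction and the scheme converges, but the bookkeeping --- tracking the accumulated $K$- and $A$-corrections and showing they stay inside $\mathcal O_K,\mathcal O_A$ --- is the actual work and is not addressed in your outline. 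The original proof in \cite{GO} organizes this differently, via uniform transversality of the Cartan decomposition away from the walls, which sidesteps the explicit iteration.
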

\begin{thm}\label{cartan}
Let $\Gamma<G$ be a lattice and
$v\in\operatorname{int}\fa^{+}$.  For any $\e>0$ we have, as
$T\to\infty$,
$$
\#\bigl\{\ga\in\Gamma :
        \|\mu(\ga)-Tv\|\le\e\bigr\}
   \;\sim\;C\,e^{2\rho_G(v) T}
$$
for some constant $C=C(\e)>0$.
\end{thm}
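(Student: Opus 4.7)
\medskip
\noindent\textbf{Proof proposal.} The plan is to run the Eskin--McMullen lattice-counting scheme in the thin-tube regime, combining a Haar-volume computation, the strong wavefront lemma (Lemma~\ref{go}), and Howe--Moore mixing of the right $G$-action on $L^2(\Gamma\backslash G)$. Set $\mathcal T_T:=\{g\in G:\|\mu(g)-Tv\|\le\varepsilon\}$ and $N(T):=\#(\Gamma\cap\mathcal T_T)$.

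The first step is a Haar-volume asymptotic. The $KAK$-integration formula
$$
\int_G f\,dg\;=\;c_G\!\int_K\!\int_{\fa^+}\!\int_K f(k_1e^Hk_2)\,\xi(H)\,dk_1\,dH\,dk_2,
\qquad
\xi(H)=\prod_{\alpha\in\Sigma^+}\bigl(\sinh\alpha(H)\bigr)^{m_\alpha},
$$
reduces $\vol(\mathcal T_T)$ to $c_G\vol(K)^2\int_{B_\varepsilon(Tv)}\xi(H)\,dH$. Because $v\in\operatorname{int}\fa^+$ keeps the ball $B_\varepsilon(Tv)$ at distance $\Theta(T)$ from every Weyl wall for $T\gg1$, the Jacobian satisfies $\xi(H)=2^{-|\Sigma^+|}e^{2\rho_G(H)}\bigl(1+O(e^{-cT})\bigr)$ uniformly on the tube. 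The substitution $H=Tv+H'$ together with the linearity of $\rho_G$ then yields
$\vol(\mathcal T_T)\sim C_\varepsilon e^{2\rho_G(v)T}$ with $C_\varepsilon=2^{-|\Sigma^+|}c_G\vol(K)^2\int_{B_\varepsilon(0)}e^{2\rho_G(H')}\,dH'$.

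The second step converts this volume into the lattice count. Fix a small symmetric identity neighborhood $U\subset G$ and a nonnegative bump $\phi_U\in C_c(G)$ with support in $U$ and $\int\phi_U=1$; let $\widetilde\phi_U\in C_c(\Gamma\backslash G)$ denote its $\Gamma$-periodization. Unfolding the matrix coefficient $\langle\pi(g)\widetilde\phi_U,\widetilde\phi_U\rangle_{\Gamma\backslash G}$ and integrating against $dg$ over $\mathcal T_T$ yields a smoothed lattice count which, by the strong wavefront lemma, lies between $N_{\pm}(T):=\#\bigl(\Gamma\cap\mathcal T_T^{\pm}\bigr)$ for the inflated/deflated tubes $\mathcal T_T^{\pm}:=\{g:\|\mu(g)-Tv\|\le\varepsilon\pm\delta(U)\}$, where $\delta(U)\to0$ as $U\to\{e\}$. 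Howe--Moore mixing---applicable because the ray $\{e^{Tv}:T>0\}$ with $v\in\operatorname{int}\fa^+$ escapes every proper parabolic of $G$---gives $\langle\pi(g)\widetilde\phi_U,\widetilde\phi_U\rangle\to\vol(\Gamma\backslash G)^{-1}$ as $g\to\infty$ in $G$, so the smoothed count is asymptotic to $\vol(\mathcal T_T)/\vol(\Gamma\backslash G)$. Combining with the first-step volume asymptotic and letting $U\to\{e\}$ produces $N(T)\sim C\,e^{2\rho_G(v)T}$ with $C=C_\varepsilon/\vol(\Gamma\backslash G)$. The main technical obstacle is uniformity of the mixing rate as the $K$-components of elements of $\mathcal T_T$ range over all of $K$ while $e^{\mu(\gamma)}$ moves through the tube; this is resolved by compactness of $K$ and joint continuity of matrix coefficients, reducing the uniform statement to the single-ray estimate along $\{e^{Tv}\}_{T\to\infty}$.
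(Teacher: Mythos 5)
Your proposal follows essentially the same route as the paper: compute the Haar volume of the tube $K\exp(B_\e(Tv))K$ via the $KAK$ integral formula to get $\sim C_\e e^{2\rho_G(v)T}$, establish well-roundedness from the strong wavefront lemma (Lemma~\ref{go}) using $v\in\operatorname{int}\fa^+$ to stay away from the Weyl walls, and then invoke Howe--Moore mixing together with the Eskin--McMullen counting argument to convert the volume asymptotic into the lattice count. The only cosmetic difference is that you spell out the bump-function unfolding and the $U\to\{e\}$ limit explicitly, whereas the paper outsources that step to \cite{EM} and \cite[Proposition 6.1]{BO}.
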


\begin{proof}
Fix $\e>0$ and put
$$
b_{T,\e}=\bigl\{u\in\fa^{+} : \|u-Tv\|<\e\bigr\},
\qquad
Z_{T}=K\,\exp(b_{T,\e})\,K .
$$
 For $g=k_1(\exp v) k_2\in K (\exp \fa^+) K$,
the Haar measure is
$$dg= \prod_{\alpha} \sinh \alpha(v)  dk_1 dv dk_2 ,$$  where the product runs over all positive roots, counted with multiplicity \cite{Kn}. We obtain that
\be\label{vv} \operatorname{Vol}Z_{T} \sim\;C_{\e}\,e^{2\rho_G(v) T} \ee  for some constant $C_\e>0$. 
Since $v\in \inte\fa^+$, the set $b_{T, \e}$ has a positive distance from all walls of $\fa^+$. Lemma \ref{go} and \eqref{vv} imply that the family $\{Z_{T}\}_{T\gg1}$ is
\emph{well-rounded}: for any $\eta>0$, there exists an open neighborhood $U_\eta$ of $e$ in $G$
such that 
$$Z_{T-\eta} \subset \bigcap_{u_1, u_2 \in U_\eta} u_1Z_Tu_2 \subset \bigcup_{u_1, u_2\in U_\eta } u_1Z_Tu_2\subset Z_{T+\eta} $$
and $$ \limsup_{\eta\to 0} \frac{ \op{Vol}(Z_{T+\eta})} {\op{Vol}(Z_{T-\eta})} = 1 .$$  

Define the counting function $F_T=F_{Z_T}$ on $(\Ga\times \Ga) \ba (G\times G) $ by
$$F_T([g_1], [g_2])=\sum_{\ga\in \G }\chi_{Z_T}( g_1^{-1}  \ga g_2)$$
so that $F_T([e], [e])=\# \Ga\cap Z_T$.
If $\phi_\eta$ is the approximation of the identity function on $ G$ supported on the $\eta$-neighborhood of $e$ in $G$ and $\Phi_\eta([g])=\sum_{\ga\in \Ga} \phi_\eta (\ga g)$, 
then the standard unfolding argument gives that
$$\langle F_T, \Phi_\eta\otimes \Phi_\eta\rangle:= \int  F_T(x_1, x_2) \Phi_\eta (x_1)  \Phi_\eta (x_2)  dx_1 dx_2 =\int_{g\in Z_T} \langle \Phi_\eta , g. \Phi_\eta\rangle_{L^2(\Ga\ba G)} dg. $$
Using strong mixing of the $G$-action on
$L^{2}(\Gamma\backslash G)$ \cite{HM}, we get 
$$\langle F_T, \Phi_\eta\otimes \Phi_\eta\rangle  \sim \frac{1}{\op{Vol}(\Ga\ba G)} \operatorname{Vol}Z_{T}.$$

Noting that  
$$\langle F_{T-\eta} , \Phi_\eta\otimes \Phi_\eta\rangle \le F_T([e], [e])\le  \langle F_{T+\eta}, \Phi_\eta\otimes \Phi_\eta\rangle , $$
 the well-roundness property of the family $\{Z_T\}$ implies that
$$ F_T([e], [e])
      \;\sim\;\frac{1}{\op{Vol}(\Ga\ba G)} \operatorname{Vol}Z_{T} .$$
\end{proof}

The following can be deduced from \cite[Theorem 1.2]{TW} for arithmetic lattices (see the proof of \cite[Theorem\;3.1]{Oh}). For rank one groups, this is a standard fact which follows from the thick-thin decomposition of rank one locally symmetric manifolds of finite volume. Hence by Margulis arithmeticity theorem \cite{Ma}, we get:
\begin{thm}\label{tw}
    Let $\Ga<G$ be a lattice. There exists a compact subset $Q\subset G$ such that
    any compact $AM$-orbit in $\Ga\ba G$ is of the form $\Ga\ba \Ga gAM$ for some $g\in Q$.
\end{thm}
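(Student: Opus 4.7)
The plan is to dichotomize on the real rank of $G$ and assemble the compact set $Q$ separately in each case, reducing the higher-rank case to arithmetic lattices via Margulis. The underlying observation is that a compact $AM$-orbit $\Ga\ba \Ga gAM$ contains a compact $A$-orbit, and conversely every compact $A$-orbit enlarges to a compact $AM$-orbit because $M$ is compact and commutes with $A$; thus finding a compact set of representatives for $A$-orbits suffices, and absorbing $M$ at the end costs nothing.

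When $\op{rank}_\br G = 1$, the finite-volume locally symmetric manifold $\Ga\ba G/K$ admits the standard thick-thin decomposition: its complement of finitely many cuspidal horoball neighborhoods is a compact set $K_0$. A compact $AM$-orbit $\Ga\ba \Ga gAM$ descends through $G/M$ to a compact $A$-orbit in $\Ga\ba G/M$, i.e.\ to a periodic trajectory of the geodesic flow on the unit tangent bundle of $\Ga\ba G/K$, whose footprint is a closed geodesic $\gamma\subset\Ga\ba G/K$. Convexity of horoballs, combined with the fact that cusp stabilizers in rank one are purely parabolic and so contain no hyperbolic elements, forces $\gamma\subset K_0$: were $\gamma$ to enter a sufficiently deep horoball, it would have to exit and never return, contradicting periodicity. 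Hence $g$ can be chosen from any compact preimage $Q\subset G$ of $K_0$ under $G\to\Ga\ba G/K$.

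When $\op{rank}_\br G \ge 2$, first reduce to the case $\Ga$ irreducible: decompose $G$ into almost-simple factors, pass up to commensurability to a product of irreducible factor-lattices, handle each factor separately, and form $Q$ as a product of compact sets. Once $\Ga$ is irreducible and of higher rank, the Margulis arithmeticity theorem \cite{Ma} asserts that $\Ga$ is arithmetic. I would then invoke \cite[Theorem 1.2]{TW}, following the translation carried out in the proof of \cite[Theorem 3.1]{Oh}: this produces a compact set $Q_0\subset G$ such that every compact $A$-orbit in $\Ga\ba G$ has a representative in $Q_0$. Setting $Q=Q_0\cdot M$, which is still compact, handles compact $AM$-orbits as explained above.

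The main obstacle is faithfully extracting the clean geometric statement ``compact $A$-orbits admit representatives in a fixed compact set'' from Tomanov--Weiss, whose original formulation is couched in $S$-arithmetic language about divergent versus non-divergent torus orbits. The required translation is precisely what is carried out in \cite[Theorem 3.1]{Oh}, so I would follow that template verbatim. The rank-one argument, by contrast, is essentially classical once compact $AM$-orbits are recognized as closed geodesics; the passage from $A$ to $AM$ and the reduction from reducible to irreducible lattices are routine.
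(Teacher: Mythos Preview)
Your approach is essentially identical to the paper's: both split into the rank-one case (thick--thin decomposition, closed geodesics avoid the cusps because cusp stabilizers are parabolic) and the higher-rank case (Margulis arithmeticity reduces to the arithmetic setting, where one invokes \cite[Theorem~1.2]{TW} via the translation in \cite[Theorem~3.1]{Oh}). One small correction in rank one: you only get $\gamma\cap K_0\neq\emptyset$, not $\gamma\subset K_0$---closed geodesics can make arbitrarily deep excursions into a cusp---but intersecting $K_0$ is already enough to choose the representative $g\in Q$.
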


\begin{cor} \label{com}    For any lattice $\Ga<G$, there is $C>1$ such that
    for any conjugacy class $[\ga]\in [\Ga]_{\lox}$,
    there exists $\ga'\in [\ga]$ such that
    $$  \|\lambda(\gamma)-\mu(\gamma')\|\le C .$$
    \end{cor}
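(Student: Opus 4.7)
The plan is to use Theorem \ref{tw} to relocate every loxodromic conjugacy class into a bounded region of $G$, and then exploit the fact that the Cartan projection $\mu$ is insensitive—up to an additive constant—to left/right multiplication by elements of a fixed compact set.

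First, given $\gamma\in\Ga$ loxodromic, choose $h\in G$ with
$$
h^{-1}\gamma h \;=\; m_0 \exp(\lambda(\gamma)),\qquad m_0\in M,\;\exp(\lambda(\gamma))\in A^+.
$$
The centralizer $Z_G(\gamma)$ contains the Cartan subgroup $hAh^{-1}$, and since $\gamma$ is $\R$-regular the orbit $\Ga\ba\Ga h\,AM$ is a compact $AM$-orbit in $\Ga\ba G$: for arithmetic $\Ga$ this is the content of Mostow–Tamagawa applied to the $\R$-split $\q$-torus $Z_G(\gamma)$, and by Margulis arithmeticity this covers the case of a general irreducible lattice of higher rank (rank one being immediate from the thick–thin decomposition).

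Next apply Theorem \ref{tw}: there exists a compact set $Q\subset G$, depending only on $\Ga$, and elements $\delta\in\Ga$, $g\in Q$, $b\in AM$ such that $h=\delta g b$. Put $\gamma':=\delta^{-1}\gamma\delta\in[\gamma]$. A direct computation, using that $A$ and $M$ commute and that $M$ is normal in $AM$, gives
$$
\gamma' \;=\; (gb)\,m_0\exp(\lambda(\gamma))\,(gb)^{-1}
        \;=\; g\,\exp(\lambda(\gamma))\,m'\,g^{-1},
$$
where $m':=bm_0b^{-1}\in M$.

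Finally, the Cartan projection satisfies the uniform estimate
$$
\bigl\|\mu(u x v)-\mu(x)\bigr\|\;\le\;\|\mu(u)\|+\|\mu(v)\|
$$
for all $u,v,x\in G$ (this is a standard subadditivity property coming from the $KAK$ decomposition and the triangle inequality for the Riemannian distance on $G/K$). Applied with $u=g$, $v=m'g^{-1}$ and $x=\exp(\lambda(\gamma))$, together with the compactness of $Q$ and of $M$, we obtain a constant $C=C(Q,M)>0$ such that
$$
\bigl\|\mu(\gamma')-\mu(\exp(\lambda(\gamma)))\bigr\|
\;=\;\bigl\|\mu(\gamma')-\lambda(\gamma)\bigr\|\;\le\;C.
$$
This yields the claim. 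The main subtlety, and the place where the work is concentrated, is the verification in the first step that the $AM$-orbit through $\Ga h$ is genuinely compact; once that is in hand, the remaining manipulations are purely formal thanks to Theorem \ref{tw} and the tameness of $\mu$ under compact perturbations.
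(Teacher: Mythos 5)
Your proof is correct and follows essentially the same route as the paper's. Both arguments diagonalize $\gamma$ by some $h\in G$, invoke Theorem~\ref{tw} to write $h\in\Ga Q\,AM$ so that a $\Gamma$-conjugate $\gamma'$ of $\gamma$ takes the form $g\,e^{\lambda(\gamma)}m'\,g^{-1}$ with $g\in Q$ and $m'\in M$, and then conclude that $\mu(\gamma')$ is within bounded distance of $\lambda(\gamma)$. The only differences are cosmetic: for the compactness of the $AM$-orbit the paper cites Prasad--Raghunathan \cite{PR} directly (which is the cleanest reference, applying to any lattice without routing through arithmeticity as you do), and for the final estimate the paper cites Benoist \cite[Lemma 4.6]{Be} rather than spelling out the subadditivity bound $\|\mu(uxv)-\mu(x)\|\le\|\mu(u)\|+\|\mu(v)\|$. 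Your explicit inequality is indeed valid for a $W$-invariant Euclidean norm on $\fa$ (it is the vector-valued triangle inequality for the $\Delta$-distance on the symmetric space, as in Kapovich--Leeb--Millson or Parreau), though strictly speaking for a general norm one should note that the constant only changes by the comparison constant between norms; in any case Benoist's lemma gives precisely what is needed.
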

\begin{proof} Let $Q$ be a compact subset in Theorem \ref{tw}.
We claim that
there exists a representative $\gamma'\in[\gamma]$ such that
$$
  \gamma' \;=\; g\,e^{\lambda(\gamma)}m_{\gamma}\,g^{-1},
  \qquad
  m_{\gamma}\in M,\; g\in Q.
$$

To see this, since
$\ga$ is loxodromic,  its centralizer in $G$ is of the form $hAMh^{-1}$ with
$\Gamma\backslash\Gamma hAM$ compact \cite{PR}. Since $h=\ga_0 g a_0m_0 \in \Ga Q AM$ with $g_0\in Q$ by Theorem \ref{tw} and $\ga = h e^{\lambda(\ga)}m h^{-1}$ for some $m\in M$,
it suffices to set $$\ga'=g e^{\lambda(\ga)}(m_0 m m_0^{-1})  g^{-1}.$$
 Therefore there is $C>1$ depending only on $Q$ such that
$
  \|\lambda(\gamma)-\mu(\gamma')\|\le C
$
 by
\cite[Lemma 4.6]{Be}.
\end{proof}

Since $\ga'\in [\gamma]$, the map $[\gamma]\to \ga'$ is an injective map to $\Ga$.
Hence we get:
\begin{cor}     Let $\Ga<G$ be a lattice. 
   For any bounded subset $B\subset \fa^+$,
   $$ \# \{[\ga]\in[\Ga] : \lambda(\ga)\in B\} <\infty .$$
\end{cor}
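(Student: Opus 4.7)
The plan is to deduce the corollary directly from Corollary~\ref{com} together with the discreteness of $\Ga$ in $G$. Given a bounded $B\subset \fa^+$, for each loxodromic conjugacy class $[\ga]\in [\Ga]_{\lox}$ with $\lambda(\ga)\in B$, I would invoke Corollary~\ref{com} to select a representative $\ga'\in [\ga]$ satisfying $\|\lambda(\ga)-\mu(\ga')\|\le C$ for a constant $C$ independent of $\ga$. Since $B$ is bounded, the Cartan projection $\mu(\ga')$ then lies in the closed $C$-neighborhood $B^{(C)}\subset \fa^+$, itself a bounded set.

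Under the Cartan decomposition $G=KA^+K$, the set $K\exp(B^{(C)})K$ is compact in $G$, being the image of a compact set under the continuous multiplication map. Because $\Ga$ is discrete in the locally compact group $G$, the intersection $\Ga\cap K\exp(B^{(C)})K$ is finite. The paragraph immediately preceding the corollary already records that $[\ga]\mapsto \ga'$ is an injective map $[\Ga]_{\lox}\to \Ga$; since its image is contained in this finite set, only finitely many loxodromic conjugacy classes can have $\lambda(\ga)\in B$. For the remaining non-loxodromic $[\ga]$ with $\lambda(\ga)\in B$, the Jordan projection lies on a wall of $\fa^+$, so the Jordan decomposition forces the hyperbolic part into a proper Levi subgroup $L<G$; one then either extends Corollary~\ref{com} to $L$ by rerunning its proof (the compact-orbit structure from Theorem~\ref{tw} applies to the Levi as well) or proceeds inductively on $\rankG$.

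The main obstacle I foresee is essentially only bookkeeping: the selection of $\ga'$ in Corollary~\ref{com} is not canonical, so one must fix a choice once per conjugacy class in order to make the assignment $[\ga]\mapsto \ga'$ genuinely well-defined and injective. Once this is arranged, the proof reduces to the soft fact that a discrete subgroup of a locally compact group meets any compact set in a finite set; the real content has already been absorbed into Corollary~\ref{com} and, one layer deeper, into Theorem~\ref{tw}.
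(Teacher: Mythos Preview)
Your treatment of the loxodromic case is exactly the paper's argument: the sentence immediately preceding the corollary records that $[\gamma]\mapsto\gamma'$ is injective, and the corollary is deduced from this together with Corollary~\ref{com} and discreteness of $\Gamma$, precisely as you outline. No further work is needed here.

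The attempted extension to non-loxodromic classes, however, has a genuine gap---and in fact cannot be repaired, because the statement is \emph{false} as literally written once $B$ meets the walls. Take $G=\SL_2(\mathbb R)$, $\Gamma=\SL_2(\mathbb Z)$, and $B=\{0\}$. Every unipotent $u_n=\begin{psmallmatrix}1&n\\0&1\end{psmallmatrix}$ has $\lambda(u_n)=0\in B$, yet a direct computation shows $u_n$ and $u_m$ are $\SL_2(\mathbb Z)$-conjugate only if $n=m$; hence there are infinitely many conjugacy classes with $\lambda(\gamma)\in B$. Your proposed induction on the Levi cannot get around this: at the bottom of the induction one is left with unipotent (or more generally non-hyperbolic) elements, for which no analogue of Corollary~\ref{com} is available, and indeed the finiteness simply fails. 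The paper's one-line proof only ever invokes Corollary~\ref{com}, which is stated for $[\Gamma]_{\lox}$, so the intended reading of the corollary is for loxodromic classes (equivalently, for $B\subset\operatorname{int}\fa^+$); you should restrict to that case and drop the inductive paragraph.
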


\noindent\textbf{Proof of Theorem \ref{up}.}
Let $B_{\epsilon}(0)\subset\frak a$ be the ball of radius~$\epsilon$
about the origin, and fix
$v\in\operatorname{int}\frak a^{+}$.
Suppose that $\gamma\in\Gamma$ satisfies
$
  \lambda(\gamma)\in T v+B_{\epsilon}(0)
$
for all sufficiently large~$T$.
Then since $v\in \inte\fa^+$ and $T$ is large, $\gamma$ is loxodromic.
Hence, by Corollary \ref{com}, there is $\gamma'\in[\gamma]$ such that
$
  \|\lambda(\gamma)-\mu(\gamma')\|\le C.
$

Thus, by the injectivity of the map $[\gamma]\to \ga'$, 
$$
  \#\bigl\{[\gamma]:
      \lambda(\gamma)\in T v+B_{\epsilon}(0)\bigr\}
  \;\le\;
  \#\bigl\{\gamma'\in\Gamma:
      \mu(\gamma')\in T v+ B_C(0) \bigr\}.
$$
Applying Theorem~\ref{cartan} proves the claim.

\begin{Rmk}\label{qu}
In \cite{Qu}, Quint introduced the growth indicator 
$$\psi_\Ga:\fa^+\to \br \cup\{-\infty\}$$ of a Zariski dense discrete subgroup $\Ga<G$. 
Let $\L_\Ga$ be the limit cone of $\Ga$, that is, the asymptotic cone of the Cartan projection $\mu(\Ga)$. For $v\in \inte\L_\Ga$,
it is equal to
$$\psi_\Ga (v)= \|v\| \inf_{\cal C} \limsup_{T\to \infty} \frac{\log \#\{\ga\in \Ga: \|\mu(\ga)\|\le T, \mu(\ga)\in \cal C\}}{T}$$ where the infimum is taken over all open cones $\cal C\subset \fa^+$ containing $v$. If $\Ga<G$ is a lattice, then $\L_\Ga=\fa^+$ and $\psi_\Ga=2\rho_G$.

While $\psi_\Ga(v)<+\infty$ for all $v\in \fa^+$ and for any discrete subgroup $\Ga$,
the directional entropy $$\overline E^\star_\Ga (v)= \|v\|\cdot  \lim_{\e\to 0}
 \limsup_{T\to \infty} \frac{\log \#\{[\ga]\in \Ga: \|\la (\ga)\|\le T, 
 \| \la (\ga)-\br_+v\|\le \e \}}{T}$$
 may take the value $+\infty$; this already occurs for a normal subgroup of a cocompact lattice of $\SL_2(\br)$ of infinite index.
 Theorem \ref{up} shows that for $\Ga$ lattice,
$ \overline E^\star_\Ga (v)\le \psi_\Ga (v)=2\rho_G(v)$ for all $v\in \inte\fa^+$. It is shown in \cite{CO} that if $\Ga$ is a Zariski dense {\it Borel Anosov} subgroup of $G$, then
$\overline E^\star_\Ga (v)= \psi_\Ga (v)$ for all $v\in \inte\L_\Ga$. 
\end{Rmk} 

\subsection*{ Upper bound without directional restriction}
We will use the following for the upper bound:
\begin{thm} \label{cc3} Let $\Ga<G$ be a lattice in $G$. 
    If $\cal C$ is a convex cone in $\fa^+$ with non-empty interior and $\cal C_T=\{v\in \cal C:\|v\| <T\}$, 
then  $$\# \Gamma\cap K \exp( \cal C_T) K \sim   C\cdot  {e^{ 2 \rho_G(u_{\cal C}) T}} T^{(\op{rank }G-1)/2}$$
where $\|\cdot\|$ is the norm on $\fa$ induced from the Killing form on $\mathfrak g$ and $u_{\cal C} $ is the unique unit vector such that
$2\rho_G(u_{\cal C})=\max_{\|u\|=1, u\in \cal C} 2\rho_G(u)$.
\end{thm}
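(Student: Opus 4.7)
The plan is to follow the Eskin--McMullen strategy exactly as in Theorem~\ref{cartan}, with the thin tube $K\exp(b_{T,\epsilon})K$ replaced by the cone body $Z_T := K\exp(\mathcal{C}_T)K$, so the argument has three stages: a sharp volume asymptotic, well-roundedness, and equidistribution via mixing.

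For the volume, the Haar formula $dg = \prod_\alpha \sinh(\alpha(v))\,dk_1\,dv\,dk_2$ (product over positive roots with multiplicity) together with $\sinh(\alpha(v)) = \tfrac12 e^{\alpha(v)}(1+O(e^{-2\alpha(v)}))$ reduces the question to the asymptotics of
\[
 I(T) := \int_{\mathcal{C}_T} e^{2\rho_G(v)}\,dv.
\]
Substituting $v = Tu$ gives $I(T) = T^r \int_{\mathcal{C}\cap\{\|u\|\le 1\}} e^{2T\rho_G(u)}\,du$, where $r=\operatorname{rank}(G)$. Since $2\rho_G$ is linear and is uniquely maximized on $\mathcal{C}\cap S^{r-1}$ at $u_{\mathcal C}$, I would apply Laplace's method: parametrize a neighborhood of $u_{\mathcal C}$ by a radial parameter $s\ge 0$ (inward from the sphere) and by $y\in\mathbb R^{r-1}$ in the tangent to the sphere at $u_{\mathcal C}$. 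To leading order
\[
 2\rho_G(u) = 2\rho_G(u_{\mathcal C}) - 2s\,\rho_G(u_{\mathcal C}) - Q(y) + O(s^2+\|y\|^3),
\]
where $Q$ is the positive definite quadratic form on the tangent space obtained from restricting $2\rho_G$ to the sphere (positivity is Lagrange: the gradient of $2\rho_G$ at $u_{\mathcal C}$ points in the direction $u_{\mathcal C}$ itself, so the tangential Hessian is $2\rho_G(u_{\mathcal C})\,\|y\|^2$ in the Killing metric, up to a constant). Integrating $s$ over $[0,\infty)$ contributes $(2T\rho_G(u_{\mathcal C}))^{-1}$ and integrating $y$ over $\mathbb R^{r-1}$ contributes $(\pi/T)^{(r-1)/2}(\det Q)^{-1/2}$. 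Combined with the $T^r$ from the change of variables, this gives $\operatorname{Vol}(Z_T) \sim C\,e^{2\rho_G(u_{\mathcal C})T}\,T^{(r-1)/2}$, matching the stated exponent.

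For the lattice-point count, I would argue exactly as in Theorem~\ref{cartan}: the volume concentrates in a shrinking neighborhood of the ray through $u_{\mathcal C}$, which lies in $\operatorname{int}\fa^+$ at positive distance from all walls, so the strong wavefront lemma (Lemma~\ref{go}) produces for every $\eta>0$ an open neighborhood $U_\eta$ of $e$ with $Z_{T-\eta} \subset \bigcap_{u_1,u_2\in U_\eta} u_1 Z_T u_2 \subset \bigcup u_1 Z_T u_2 \subset Z_{T+\eta}$, and the volume estimate yields $\operatorname{Vol}(Z_{T+\eta})/\operatorname{Vol}(Z_{T-\eta})\to 1$ as $\eta\to 0$. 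Hence $\{Z_T\}$ is well-rounded. Mixing of the $A$-action on $L^2(\Gamma\backslash G)$ (Howe--Moore) then gives, by the standard Eskin--McMullen argument, $\#(\Gamma\cap Z_T) \sim \operatorname{Vol}(Z_T)/\operatorname{Vol}(\Gamma\backslash G)$, which is the claim.

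The main obstacle is the Laplace step. The exponent $T^{(r-1)/2}$ is exactly the one produced when $u_{\mathcal C}$ is a smooth interior point of the maximizing locus on $\mathcal{C}\cap S^{r-1}$; a careful treatment therefore must show that the dominant contribution to $I(T)$ really comes from a neighborhood of $u_{\mathcal C}$ that sits strictly inside $\mathcal C$ and strictly inside $\fa^+$, so that neither the cone faces nor the walls of $\fa^+$ distort the saddle-point asymptotic. This is essentially a matter of bounding the integrand on the complement of a small neighborhood of $u_{\mathcal C}$ by $e^{2T(\rho_G(u_{\mathcal C})-\delta)}$ for some $\delta>0$, which follows from the uniqueness of the maximizer and compactness. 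Once the volume asymptotic is in hand, well-roundedness and the equidistribution step are routine adaptations of the proof of Theorem~\ref{cartan}.
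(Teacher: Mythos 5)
Your proposal is correct and follows essentially the same three-stage plan as the paper: a sharp volume asymptotic for $Z_T=K\exp(\mathcal C_T)K$, well-roundedness via the strong wavefront lemma (Lemma~\ref{go}), and the Eskin--McMullen mixing argument. The only difference is that the paper obtains $\operatorname{Vol}(Z_T)\sim C\,e^{2\rho_G(u_{\mathcal C})T}T^{(\operatorname{rank}G-1)/2}$ by citing \cite[Lemma~5.4]{GO} (stated there for $\mathcal C=\fa^+$, with the remark that the same proof extends to any convex cone with non-empty interior), whereas you carry out the underlying Laplace/saddle-point computation explicitly; this is the same argument, just unpacked.
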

\begin{proof} 
In \cite[Lemma 5.4]{GO}, it is shown that for $\cal C=\fa^+$,
$$\text{Vol} (K \exp( \cal C_T) K )\sim  C\cdot  {e^{ 2 \rho_G(u_{\cal C})T }} T^{(\op{rank }G-1)/2} .$$  The same proof works for any convex cone $\cal C$ with non-empty interior.

By Theorem \ref{go}, the family
$Z_T= K \exp( \cal C_T) K$, $T\ge  1$ is  well-rounded,  as in the proof of Theorem \ref{cartan}. Consequently, by the same argument used there, we get
$$\# \Gamma\cap K \exp( \cal C_T) K \sim \text{Vol}(Z_T).$$
\end{proof}

\begin{cor} \label{c1} Let $\Ga<G$ be a lattice.
There exist $C>0$ such that for all $T>1$,
$$   
   \#\{[\ga]\in [\Gamma]_{\op{lox}} :  \|\lambda(\ga)\| <T\} \le C  
    e^{2\|\rho_G\| T}  T^{(\op{rank }G-1)/2}  $$
    where $\|\rho_G\|=\max_{u\in \fa^+, \|u\|=1} \rho_G(u)$.
\end{cor}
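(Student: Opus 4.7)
The plan is to reduce the conjugacy-class count to a lattice-point count on the group, and then apply Theorem~\ref{cc3} with $\cal C=\fa^+$. This is the same reduction used in the proof of Theorem~\ref{up}, only now we remove the directional restriction $\|\lambda(\ga)-\br_+v\|\le\e$ and instead estimate over the entire chamber.

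First, I would invoke Corollary~\ref{com}: there is a constant $C_0\ge 1$ such that every class $[\ga]\in[\Ga]_{\lox}$ has a representative $\ga'\in[\ga]$ with $\|\lambda(\ga)-\mu(\ga')\|\le C_0$. The assignment $[\ga]\mapsto\ga'$ is injective into $\Ga$. If $\|\lambda(\ga)\|<T$, then $\|\mu(\ga')\|<T+C_0$, so
$$\#\{[\ga]\in[\Ga]_{\lox}:\|\lambda(\ga)\|<T\}\;\le\;\#\bigl(\Ga\cap K\exp(\fa^+_{T+C_0})K\bigr).$$

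Next, I would apply Theorem~\ref{cc3} to the convex cone $\cal C=\fa^+$ (which has non-empty interior). The unique unit vector $u_{\fa^+}\in\fa^+$ realizing $\max_{\|u\|=1}\rho_G(u)$ satisfies $\rho_G(u_{\fa^+})=\|\rho_G\|$ by definition. Theorem~\ref{cc3} then yields
$$\#\bigl(\Ga\cap K\exp(\fa^+_{T+C_0})K\bigr)\;\sim\;C_1\,e^{2\|\rho_G\|(T+C_0)}(T+C_0)^{(\rankG-1)/2}.$$
Absorbing the constant factor $e^{2\|\rho_G\|C_0}$ and using $(T+C_0)^{(\rankG-1)/2}\ll T^{(\rankG-1)/2}$ for $T\ge 1$, we obtain the asserted bound with a suitable $C>0$.

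There is essentially no obstacle here: the two inputs, Corollary~\ref{com} and Theorem~\ref{cc3}, do all the work, and the only bookkeeping is the harmless shift of the radius by $C_0$ and the trivial bound on $(T+C_0)^{(\rankG-1)/2}$. The deeper content lies in Theorem~\ref{cc3} (well-roundedness plus mixing à la Eskin–McMullen) and in Corollary~\ref{com} (Theorem~\ref{tw} combined with \cite[Lemma~4.6]{Be}); once these are granted, the corollary follows immediately.
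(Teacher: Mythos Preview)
Your proof is correct and follows exactly the same approach as the paper: reduce via Corollary~\ref{com} to a Cartan-projection lattice count with radius $T+C_0$, then apply Theorem~\ref{cc3} with $\mathcal C=\fa^+$. Your write-up is in fact more explicit than the paper's, which simply says ``the upper bound follows from Theorem~\ref{cc3}'' after the reduction.
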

\begin{proof}
Let $[\ga]\to \ga'$ be the injective map from the conjugacy classes of loxodromic elements
to $\Gamma$ given in Corollary \ref{com}. 
Therefore $$ \#\{[\ga]\in [ {\Gamma}]_{\op{lox}}:  \|\lambda(\ga)\| <T\}
\le \# \{ \ga' \in \Gamma, \|\mu (\ga')\| <T +C\} $$
where $C>1$ is as in Corollary \ref{com}. Therefore the upper bound follows from Theorem \ref{cc3}. \end{proof}
We remark that in \cite{DL}, some upper bound  for cocompact lattices of $G$ was obtained. We record the following for $\SL_n(\z)$: 
\begin{cor} \label{euc}
There exist $C_1, C_2>0$ such that  for all $T>1$,
   \begin{multline*}
  C_1 e^{ d_n T/2}  \le 
   \#\{[\ga]\in [ \SL_n(\z)]_{\op{lox}}:  \|\lambda(\ga)\|_{\op{Euc}} <T\} \le C_2  
    T^{(n-2)/2} e^{ d_n T}   \end{multline*} 
    where $d_n={\sqrt{\frac {n(n^2-1)}{3}}}$.
\end{cor}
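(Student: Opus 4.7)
The plan is to derive Corollary \ref{euc} by directly combining two results that have already been established in the paper: Corollary \ref{euc3} provides the lower bound, and Corollary \ref{c1} provides the upper bound. The main task is arithmetic bookkeeping, identifying the relevant constants for $G=\SL_n(\mathbb R)$ under the Euclidean norm on $\fa=\mathsf H$.

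For the upper bound, I would specialize Corollary \ref{c1} to $G=\SL_n(\mathbb R)$ and $\Ga=\SL_n(\mathbb Z)$. Since $\op{rank}(\SL_n)=n-1$, the polynomial factor becomes $T^{(n-2)/2}$. The exponent is $2\|\rho_{\SL_n}\|=2\max_{\|u\|_{\op{Euc}}=1}\rho_{\SL_n}(u)$, and this maximum was already computed in Corollary \ref{euc3}(1) as $\sqrt{n(n^2-1)/12}=d_n/2$ (the maximum being attained at the explicit vector $v^*_k=(n+1-2k)/\sqrt{n(n^2-1)/3}$ by Cauchy–Schwarz applied to $2\rho_{\SL_n}(v)=\sum_k(n+1-2k)v_k$). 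Hence $2\|\rho_{\SL_n}\|=d_n$, producing the upper bound $C_2 T^{(n-2)/2}e^{d_n T}$.

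For the lower bound, I would apply Corollary \ref{euc3} with the Euclidean norm. That corollary delivers a constant $C>0$ such that
\[
\#\bigl\{[\ga]\in[\SL_n(\mathbb Z)] : \|\lambda(\ga)\|_{\op{Euc}} < T\bigr\} \ \ge\ C\,e^{\rho_{\SL_n}(v_{\|\cdot\|_{\op{Euc}}})\,T},
\]
and part~(1) of the same corollary identifies $\rho_{\SL_n}(v_{\|\cdot\|_{\op{Euc}}})=\sqrt{n(n^2-1)/12}=d_n/2$, giving $C_1 e^{d_n T/2}$. A minor point to note is that in Corollary \ref{euc} the count is restricted to $[\SL_n(\mathbb Z)]_{\op{lox}}$, but the classes furnished by Theorem \ref{jm} (which underlies Corollary \ref{euc3}) are automatically loxodromic once $T$ is large enough, since their Jordan projections lie in a thin tube around a ray in $\inte\fa^+=\mathsf H_+$; this leaves the lower bound unaffected up to adjusting the constant.

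I do not anticipate any serious obstacle: the proof is essentially two citations plus a one-line numerical check that the constant $d_n=\sqrt{n(n^2-1)/3}$ equals both $2\|\rho_{\SL_n}\|_{\op{Euc}}$ and $2\rho_{\SL_n}(v_{\|\cdot\|_{\op{Euc}}})$. The one subtlety worth flagging is the gap between the two exponents: the lower bound has $d_n/2$ while the upper bound has $d_n$, a factor-of-two discrepancy reflecting exactly the gap in Theorem \ref{ab} between $\rho_G(v)$ and $2\rho_G(v)$ and ultimately traceable to the unknown size of the class number $h(\ga)$ (cf.\ Question \ref{cj}).
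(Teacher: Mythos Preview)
Your proposal is correct and follows essentially the same approach as the paper: cite Corollary \ref{euc3} for the lower bound, Corollary \ref{c1} for the upper bound, and identify $d_n=2\|\rho_{\SL_n}\|_{\op{Euc}}$ via \eqref{vs}. The only detail the paper adds that you glossed over is the observation that Corollary \ref{c1} is stated for the Killing-form norm on $\fa$, which for $\frak{sl}_n$ is a constant multiple of the Euclidean norm, so the scaling cancels and the exponent is unchanged.
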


\begin{proof} The lower bound follows from Corollary \ref{euc3}.
Since the norm on $\fa$ induced by the Killing form on $\frak{sl}_n(\br)$
is a constant multiple of the Euclidean norm on $\fa$,
the upper bound follows from Corollary \ref{c1}
and \eqref{vs}. 
\end{proof}

  \end{document}